
\documentclass[11pt,dvips,twoside,letterpaper]{article}
\usepackage[usenames]{color}
\usepackage{pslatex}
\usepackage{fancyhdr}
\usepackage{graphicx}
\usepackage{geometry}
\usepackage{amsmath}
\usepackage{amssymb}
\usepackage{amsfonts}
\usepackage{amsthm,amscd}

\setcounter{MaxMatrixCols}{10}

\def\figurename{Figure}
\makeatletter
\renewcommand{\fnum@figure}[1]{\figurename~\thefigure.}
\makeatother
\def\tablename{Table}
\makeatletter
\renewcommand{\fnum@table}[1]{\tablename~\thetable.}
\makeatother
\newtheorem{theorem}{Theorem}[section]
\newtheorem{lemma}[theorem]{Lemma}
\newtheorem{corollary}[theorem]{Corollary}

\theoremstyle{definition}
\newtheorem{definition}[theorem]{Definition}
\newtheorem{example}[theorem]{Example}

\theoremstyle{remark}
\newtheorem{remark}[theorem]{Remark}
\numberwithin{equation}{section}

\setlength{\topmargin}{-0.35in}
\setlength{\textheight}{8.5in}
\setlength{\textwidth}{5.5in}
\setlength{\oddsidemargin}{0.5in}
\setlength{\evensidemargin}{0.5in} \setlength{\headheight}{36pt}
\setlength{\headsep}{8pt}
\input{tcilatex}
\begin{document}

\title{\bfseries\scshape{  A More Accurate Half-Discrete Hardy-Hilbert-Type Inequality with the
Best Possible Constant Factor Related to the Extended Riemann-Zeta Function }}
\author{Michael Th. Rassias$^{1\ast }$ and Bicheng Yang$^{2}$ \and 1*.
Institute of Mathematics, University of Zurich, CH - 8057, \and Zurich, Switzerland
\and \& Institute for Advanced Study, Program in Interdisciplinary Studies,\and
1 Einstein Dr, Princeton, NJ 08540, USA.
\and 2. Department of Mathematics, Guangdong University of \and Education,
Guangzhou, Guangdong 510303, P. R. China \and E-mail: 1*. Corresponding
author: michael.rassias@math.ethz.ch \and 2. bcyang@gdei.edu.cn\,\,\,\,
bcyang818@163.com}
\date{}
\maketitle
\thispagestyle{empty} \setcounter{page}{1}

\renewcommand{\headrulewidth}{0pt}
\begin{abstract}
   By the method of weight coefficients, techniques of real analysis and
Hermite-Hadamard's inequality, a half-discrete Hardy-Hilbert-type inequality
related to the kernel of the hyperbolic cosecant function with the best possible
constant factor expressed in terms of the extended Riemann-zeta function is proved.
The more accurate equivalent forms, the operator expressions with the norm,
the reverses and some particular cases are also considered. \newline

\textbf{Key words:}     Hardy-Hilbert-type inequality; extended Riemann-zeta function; Hurwitz zeta function; Gamma function; weight function;
equivalent form; operator

\textbf{\ }
\end{abstract}

\section{Introduction}

If $p>1,\frac{1}{p}+\frac{1}{q}=1,f(x),g(y)\geq 0,f\in L^{p}(\mathbf{R}%
_{+}),g\in L^{q}(\mathbf{R}_{+}),$
$$||f||_{p}=\left(\int_{0}^{\infty
}f^{p}(x)dx\right)^{\frac{1}{p}}>0,$$ and $||g||_{q}>0,$ then we have the following
Hardy-Hilbert's integral inequality (cf. \cite{HLP}):
\begin{equation}
\int_{0}^{\infty }\int_{0}^{\infty }\frac{f(x)g(y)}{x+y}dxdy<\frac{\pi }{%
\sin (\pi /p)}||f||_{p}||g||_{q},  \label{1.1}
\end{equation}%
where, the constant factor $\frac{\pi }{\sin (\pi /p)}$ is the best
possible. Assuming that 
$$a_{m},b_{n}\geq 0,a=\{a_{m}\}_{m=1}^{\infty }\in
l^{p}, b=\{b_{n}\}_{n=1}^{\infty }\in l^{q}, ||a||_{p}=\left(\sum_{m=1}^{%
\infty }a_{m}^{p}\right)^{\frac{1}{p}}>0,||b||_{q}>0,$$ we have the following
discrete analogue of (\ref{1.1})   with the same best constant $\frac{\pi }{%
\sin (\pi /p)}$ (cf. \cite{HLP}):%
\begin{equation}
\sum_{m=1}^{\infty }\sum_{n=1}^{\infty }\frac{a_{m}b_{n}}{m+n}<\frac{\pi }{%
\sin (\pi /p)}||a||_{p}||b||_{q}.  \label{1.2}
\end{equation}%
Inequalities (\ref{1.1}) and (\ref{1.2}) are important in Mathematical Analysis and its
applications (cf. \cite{HLP}, \cite{MPF}, \cite{Y1}, \cite{Y2}, \cite{Y4}).

Suppose that $\mu _{i},\upsilon _{j}>0\ (i,j\in \mathbf{N=\{}1,2,\cdots
\mathbf{\}}),$%
\begin{equation}
U_{m}:=\sum_{i=1}^{m}\mu _{i},V_{n}:=\sum_{j=1}^{n}\nu _{j}\ \ (m,n\in \mathbf{N}%
).  \label{1.3}
\end{equation}%
Then we have the following inequality (cf. \cite{HLP}, Theorem 321,
replacing $\mu _{m}^{1/q}a_{m}$ and $\upsilon _{n}^{1/p}b_{n}$ by $a_{m}$
and $b_{n}$) :
\begin{equation}
\sum_{m=1}^{\infty }\sum_{n=1}^{\infty }\frac{a_{m}b_{n}}{U_{m}+V_{n}}<\frac{%
\pi }{\sin (\frac{\pi }{p})}\left( \sum_{m=1}^{\infty }\frac{a_{m}^{p}}{\mu
_{m}^{p-1}}\right) ^{\frac{1}{p}}\left( \sum_{n=1}^{\infty }\frac{b_{n}^{q}}{%
\nu _{n}^{q-1}}\right) ^{\frac{1}{q}}.  \label{1.4}
\end{equation}%
For $\mu _{i}=\upsilon _{j}=1\ (i,j\in \mathbf{N}),$ inequality (\ref{1.4})
reduces to (\ref{1.2}). We call (\ref{1.4}) Hardy-Hilbert-type inequality.

\textbf{Note.} The authors of \cite{HLP} did not prove that (\ref{1.4}) is
valid with the best possible constant factor.

In 1998, by introducing an independent parameter $\lambda \in (0,1]$, Yang
\cite{Y3} obtained an extension of (\ref{1.1}) with the kernel $\frac{1}{%
(x+y)^{\lambda }}$ for $p=q=2$. Refining the method applied in \cite{Y3}, Yang \cite%
{Y4} provided extensions of (\ref{1.1}) and (\ref{1.2}) as follows:

Assuming that $\lambda _{1},\lambda _{2}\in \mathbf{R},\lambda _{1}+\lambda
_{2}=\lambda ,k_{\lambda }(x,y)$ is a non-negative homogeneous function of
degree $-\lambda ,$ with $$k(\lambda _{1})=\int_{0}^{\infty }k_{\lambda
}(t,1)t^{\lambda _{1}-1}dt\in \mathbf{R}_{+},$$ 
$$\phi (x)=x^{p(1-\lambda
_{1})-1},\ \ \psi (x)=x^{q(1-\lambda _{2})-1},f(x),g(y)\geq 0,$$
\begin{equation*}
f\in L_{p,\phi }(\mathbf{R}_{+})=\left\{ f;||f||_{p,\phi
}:=\{\int_{0}^{\infty }\phi (x)|f(x)|^{p}dx\}^{\frac{1}{p}}<\infty \right\} ,
\end{equation*}%
where $g\in L_{q,\psi }(\mathbf{R}_{+}),||f||_{p,\phi },||g||_{q,\psi }>0,$ we
have
\begin{equation}
\int_{0}^{\infty }\int_{0}^{\infty }k_{\lambda }(x,y)f(x)g(y)dxdy<k(\lambda
_{1})||f||_{p,\phi }||g||_{q,\psi },  \label{1.5}
\end{equation}%
where, the constant factor $k(\lambda _{1})$ is the best possible. Moreover,
if $k_{\lambda }(x,y)$ keeps finite and $k_{\lambda }(x,y)x^{\lambda
_{1}-1}(k_{\lambda }(x,y)y^{\lambda _{2}-1})$ is decreasing with respect to $%
x>0\ (y>0),$ then for $a_{m,}b_{n}\geq 0,$%
\begin{equation*}
a\in l_{p,\phi }=\left\{ a;||a||_{p,\phi }:=\left(\sum_{n=1}^{\infty }\phi
(n)|a_{n}|^{p}\right)^{\frac{1}{p}}<\infty \right\} ,
\end{equation*}%
$b=\{b_{n}\}_{n=1}^{\infty }\in l_{q,\psi },$ $||a||_{p,\phi },||b||_{q,\psi
}>0,$ we have
\begin{equation}
\sum_{m=1}^{\infty }\sum_{n=1}^{\infty }k_{\lambda
}(m,n)a_{m}b_{n}<k(\lambda _{1})||a||_{p,\phi }||b||_{q,\psi },  \label{1.6}
\end{equation}%
where, the constant factor $k(\lambda _{1})$ is still the best possible.

For $0<\lambda _{1},\lambda _{2}\leq 1,\lambda _{1}+\lambda _{2}=\lambda ,$
we set%
\begin{equation*}
k_{\lambda }(x,y)=\frac{1}{(x+y)^{\lambda }}\ \ ((x,y)\in \mathbf{R}_{+}^{2}).
\end{equation*}%
Then by (\ref{1.6}), we have
\begin{equation}
\sum_{m=1}^{\infty }\sum_{n=1}^{\infty }\frac{a_{m}b_{n}}{(m+n)^{\lambda }}%
<B(\lambda _{1},\lambda _{2})||a||_{p,\phi }||b||_{q,\psi },  \label{1.7}
\end{equation}%
where, the constant $B(\lambda _{1},\lambda _{2})$ is the best possible, and%
\begin{equation*}
B\left( u,v\right) =\int_{0}^{\infty }\frac{1}{(1+t)^{u+v}}%
t^{u-1}dt^{{}}\ \ (u,v>0)
\end{equation*}%
is the beta function. Clearly, for $\lambda =1,\lambda _{1}=\frac{1}{q}%
,\lambda _{2}=\frac{1}{p},$ inequality (\ref{1.7}) reduces to (\ref{1.2}).

In 2015, by adding some conditions, Yang \cite{Y4a} extended (%
\ref{1.7}) and (\ref{1.4}) as follows:
\begin{eqnarray}
&&\sum_{m=1}^{\infty }\sum_{n=1}^{\infty }\frac{a_{m}b_{n}}{%
(U_{m}+V_{n})^{\lambda }}  \notag \\
&<&B(\lambda _{1},\lambda _{2})\left[ \sum_{m=1}^{\infty }\frac{%
U_{m}^{p(1-\lambda _{1})-1}a_{m}^{p}}{\mu _{m}^{p-1}}\right] ^{\frac{1}{p}%
}\left[ \sum_{n=1}^{\infty }\frac{V_{n}^{q(1-\lambda _{2})-1}b_{n}^{q}}{\nu
_{n}^{q-1}}\right] ^{\frac{1}{q}},  \label{1.8}
\end{eqnarray}%
where, the constant $B(\lambda _{1},\lambda _{2})$ is still the best
possible.

Some other results including multidimensional Hilbert-type inequalities are
provided in \cite{YMP}-\cite{Y6}.

Related to the topic of half-discrete Hilbert-type inequalities with the
non-homogeneous kernels, Hardy et al. provided a few results in Theorem 351
of \cite{HLP}. But they did not prove that the constant factors are the
best possible. However, Yang \cite{Y5} established a result with the kernel $\frac{1%
}{(1+nx)^{\lambda }}$ by introducing a variable and proved that the constant
factor is the best possible. In 2011 Yang \cite{122} proved the following
half-discrete Hardy-Hilbert's inequality with the best possible constant
factor $B\left( \lambda _{1},\lambda _{2}\right) $:%
\begin{equation}
\int_{0}^{\infty }f\left( x\right) \left[ \sum_{n=1}^{\infty }\frac{a_{n}}{%
\left( x+n\right) ^{\lambda }}\right] dx<B\left( \lambda _{1},\lambda
_{2}\right) ||f||_{p,\phi }||a||_{q,\psi },  \label{1.9}
\end{equation}%
where, $\lambda _{1}>0$, $0<\lambda _{2}\leq 1$, $\lambda _{1}+\lambda
_{2}=\lambda .$ Zhong et \emph{al} (\cite{140}--\cite{144}) investigated
several half-discrete Hilbert-type inequalities with particular kernels.
Applying the method of weight functions, a half-discrete Hilbert-type
inequality with a general homogeneous kernel of degree $-\lambda \in \mathbf{%
R}$ and a best constant factor $k\left( \lambda _{1}\right) $ is obtained as
follows:%
\begin{equation}
\int_{0}^{\infty }f(x)\sum_{n=1}^{\infty }k_{\lambda }(x,n)a_{n}dx<k(\lambda
_{1})||f||_{p,\phi }||a||_{q,\psi },  \label{1.10}
\end{equation}%
which is an extension of (\ref{1.9}) (cf. \cite{125}). At the same time, a
half-discrete Hilbert-type inequality with a general non-homogeneous kernel
and a best constant factor is given by Yang \cite{YB1}. In 2012-2014, Yang
et al. published three books \cite{YB8}, \cite{YB9} and \cite{YB7} extensively 
presenting the framework of half-discrete Hilbert-type inequalities.

In this paper, by the method of weight coefficients, techniques of real analysis and
Hermite-Hadamard's inequality, a half-discrete
Hardy-Hilbert-type inequality related to the kernel of the hyperbolic cosecant
function with a best possible constant factor expressed by the extended
Riemann-zeta function is proved, which is an extension of (\ref{1.10}) for $%
\lambda =0$ in the following particular kernel:
\begin{equation*}
k_{0}(x,n)=\frac{\csc h(\rho (\frac{n}{x})^{\gamma })}{e^{\alpha (\frac{n}{x}%
)^{\gamma }}}(\rho >\max \{0,-\alpha \},0<\gamma <1).
\end{equation*}
 Furthermore, the more accurate
equivalent forms, the operator expressions with the norm, the reverses and
some particular cases are also considered.

\section{Some Lemmas}
In the sequel, we shall assume that $\nu _{n}>0\ (n\in \mathbf{N}%
),\{\upsilon _{n}\}_{n=1}^{\infty }$ is decreasing, $V_{n}=\sum_{j=1}^{n}\nu
_{j}$, $\mu (t)$ is a positive continuous function in $\mathbf{R}%
_{+}=(0,\infty )$,
\begin{equation*}
U(0):=0;\ \ U(x):=\int_{0}^{x}\mu (t)dt<\infty ^{{}}(x\in (0,\infty )),
\end{equation*}%
$$\nu (t):=\nu _{n},\ t\in (n-1,n]\ (n\in \mathbf{N}),$$ 
and
\begin{equation*}
V(0):=0;\ \ V(y):=\int_{0}^{y}\nu (t)dt^{{}}(y\in (0,\infty )),
\end{equation*}%
$p\neq 0,1,$ $\frac{1}{p}+\frac{1}{q}=1,\delta \in \{-1,1\},\beta \leq \frac{%
\nu _{1}}{2},$ $f(x),a_{n}\geq 0\ (x\in \mathbf{R}_{+},n\in \mathbf{N}),$ $$
||f||_{p,\Phi _{\delta }}=(\int_{0}^{\infty }\Phi _{\delta }(x)f^{p}(x)dx)^{%
\frac{1}{p}},$$ 
$$||a||_{q,\Psi }=(\sum_{n=1}^{\infty }\Psi _{\beta
}(n)b_{n}^{q})^{\frac{1}{q}},$$ where,
\begin{eqnarray*}
\Phi _{\delta }(x) &:&=\frac{U^{p(1-\delta \sigma )-1}(x)}{\mu ^{p-1}(x)}%
(x\in \mathbf{R}_{+}), \\
\Psi _{\beta }(n) &:&=\frac{(V_{n}-\beta )^{q(1-\sigma )-1}}{\nu _{n+1}^{q-1}%
}(n\in \mathbf{N}).
\end{eqnarray*}

\begin{lemma}
  If $a\in \mathbf{R},$ $f(x)$ is continuous in $[a-\frac{1}{%
2},a+\frac{1}{2}],$ $f^{\prime }(x)$ is strictly increasing in $(a-\frac{1}{2}%
,a)$ and $(a,a+\frac{1}{2})$ respectively, as well as%
\begin{equation*}
\lim_{x\rightarrow a-}f^{\prime }(x)=f^{\prime }(a-0)\leq f^{\prime
}(a+0)=\lim_{x\rightarrow a+}f^{\prime }(x),
\end{equation*}%
then $f(x)$ is strictly convex in $[a-\frac{1}{2},a+\frac{1}{2}],$ and we
have the following Hermite-Hadamard's inequality (cf. \cite{K1}):%
\begin{equation}
f(a)<\int_{a-\frac{1}{2}}^{a+\frac{1}{2}}f(x)dx.  \label{2.1}
\end{equation}
\end{lemma}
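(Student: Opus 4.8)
The plan is to establish the Hermite--Hadamard inequality \eqref{2.1} by first verifying strict convexity of $f$ on the whole interval $[a-\tfrac12,a+\tfrac12]$ and then applying the standard midpoint estimate. For the convexity claim, I would argue as follows. On each of the two open subintervals $(a-\tfrac12,a)$ and $(a,a+\tfrac12)$ the derivative $f'$ is strictly increasing by hypothesis, so $f$ is strictly convex on each piece separately. The only thing that could fail is convexity \emph{across} the point $a$: a priori $f$ could have a concave ``kink'' there. But the hypothesis $f'(a-0)\le f'(a+0)$ rules exactly this out. Concretely, for $a-\tfrac12\le x_1<a<x_2\le a+\tfrac12$ one writes, using the mean value theorem on $[x_1,a]$ and $[a,x_2]$ together with the monotonicity of $f'$ on each piece,
\[
\frac{f(a)-f(x_1)}{a-x_1}=f'(\xi_1)<f'(a-0)\le f'(a+0)<f'(\xi_2)=\frac{f(x_2)-f(a)}{x_2-a},
\]
for some $\xi_1\in(x_1,a)$, $\xi_2\in(a,x_2)$, which is precisely the slope condition characterizing (strict) midpoint-type convexity; combined with continuity of $f$ this yields strict convexity on all of $[a-\tfrac12,a+\tfrac12]$. (Alternatively one can invoke the standard fact that a continuous function whose derivative exists and is nondecreasing off a single point, with the matching one-sided limit inequality, is convex.)

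Once strict convexity is in hand, the inequality \eqref{2.1} is the classical left Hermite--Hadamard bound applied on the symmetric interval centered at $a$. I would prove it directly rather than quoting it: by strict convexity, for every $t\in(0,\tfrac12]$ the tangent-line / midpoint comparison gives
\[
f(a-t)+f(a+t)>2f(a),
\]
with strict inequality for $t$ in a set of positive measure (indeed for all $t\in(0,\tfrac12]$ since the convexity is strict). Integrating this over $t\in(0,\tfrac12)$ and changing variables,
\[
\int_{a-\frac12}^{a+\frac12}f(x)\,dx=\int_0^{\frac12}\bigl(f(a-t)+f(a+t)\bigr)\,dt>\int_0^{\frac12}2f(a)\,dt=f(a),
\]
which is exactly \eqref{2.1}. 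The chord inequality $f(a-t)+f(a+t)>2f(a)$ itself follows by writing $a=\tfrac12(a-t)+\tfrac12(a+t)$ and applying the definition of strict convexity (with the endpoint case $t=\tfrac12$ included because the convexity extends to the closed interval by continuity).

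The main obstacle is the gluing step: making rigorous that strict convexity on the two half-intervals plus the one-sided derivative inequality at $a$ upgrades to strict convexity on the full closed interval. The subtlety is purely at the junction point $a$ and at the endpoints $a\pm\tfrac12$, where only one-sided behavior is controlled; the mean value theorem argument above handles the junction, and continuity on the closed interval handles the endpoints. Everything after that — the chord inequality and the integration — is routine. I would also remark that strictness in \eqref{2.1} is genuine (not just ``$\le$'') precisely because $f'$ is \emph{strictly} increasing on each piece, so the chord inequality is strict for every interior $t$.
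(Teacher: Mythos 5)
Your proof is correct, but it follows a genuinely different route from the paper's. The paper does not glue convexity across $a$ at all: it constructs the affine support function $g(x)=f^{\prime }(a-0)(x-a)+f(a)$, observes that $(f-g)^{\prime }=f^{\prime }(x)-f^{\prime }(a-0)$ is negative on $(a-\tfrac{1}{2},a)$ and positive on $(a,a+\tfrac{1}{2})$ (the latter using $f^{\prime }(a+0)\geq f^{\prime }(a-0)$), so that $f-g$ vanishes at $a$ and is strictly positive elsewhere, and then integrates, using $\int_{a-1/2}^{a+1/2}g=f(a)$. You instead prove the symmetric chord inequality $f(a-t)+f(a+t)>2f(a)$ via the mean value theorem on each side of $a$ and integrate over $t\in (0,\tfrac{1}{2})$ after folding the interval about its center. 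Both arguments are valid and of comparable length; note that your displayed slope chain with $x_{1}=a-t$, $x_{2}=a+t$ already \emph{is} the chord inequality, so the full gluing of convexity across $a$ is not actually needed to reach (\ref{2.1}). What your extra work buys is an honest justification of the lemma's assertion that $f$ is strictly convex on all of $[a-\tfrac{1}{2},a+\tfrac{1}{2}]$: the paper deduces that claim from the single support line at $a$, which by itself does not imply convexity, whereas your three-slope argument (together with convexity on each half) does. Conversely, the paper's support-line method is slightly more economical for the inequality itself and generalizes immediately to non-symmetric intervals, where the midpoint-folding trick would need the affine correction term.
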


\begin{proof}
 Since $f^{\prime }(a-0)\ (\leq f^{\prime }(a+0))$ is finite, we
define a function $g(x)$ as follows:
\begin{equation*}
g(x):=f^{\prime }(a-0)(x-a)+f(a),x\in \lbrack a-\frac{1}{2},a+\frac{1}{2}].
\end{equation*}%
In view of $f^{\prime }(x)$ being strictly increasing in $(a-\frac{1}{2},a),$
then for $x\in (a-\frac{1}{2},a),$ 
$$(f(x)-g(x))^{\prime } =f^{\prime
}(x)-f^{\prime }(a-0)<0.$$ Since $f(a)-g(a)=0,$ it follows that $%
f(x)-g(x)>0,\ x\in (a-\frac{1}{2},a).$ Similarly, we can obtain $%
f(x)-g(x)>0,\ x\in (a,a+\frac{1}{2}).$ Hence, $f(x)$ is strictly convex in $[a-%
\frac{1}{2},a+\frac{1}{2}],$ and therefore%
\begin{equation*}
\int_{a-\frac{1}{2}}^{a+\frac{1}{2}}f(x)dx>\int_{a-\frac{1}{2}}^{a+\frac{1}{2%
}}g(x)dx=f(a),
\end{equation*}%
namely, (\ref{2.1}) follows.
\end{proof}

\begin{example}
  For $\rho >\max \{0,-\alpha \},0<\gamma <\sigma \leq 1,$
$$\csc h(u)=\frac{2}{e^{u}-e^{-u}}\ (u>0)$$ is called hyperbolic cosecant function
(cf. \cite{ZYQ}), \ we set%
 \begin{equation*}
h(t)=\frac{\csc h(\rho t^{\gamma })}{e^{\alpha t^{\gamma }}}=\frac{2}{%
e^{(\alpha +\rho )t^{\gamma }}(1-e^{-2\rho t^{\gamma }})}\ \ (t\in \mathbf{R}%
_{+}).
\end{equation*}
\end{example}

(i) Setting $u=\rho t^{\gamma },$ we find%
\begin{eqnarray*}
k(\sigma ) &:=&\int_{0}^{\infty }\frac{\csc h(\rho t^{\gamma })}{e^{\alpha
t^{\gamma }}}t^{\sigma -1}dt \\
&=&\frac{1}{\gamma \rho ^{\sigma /\gamma }}\int_{0}^{\infty }\frac{\csc h(u)%
}{e^{\frac{\alpha }{\rho }u}}u^{\frac{\sigma }{\gamma }-1}du \\
&=&\frac{2}{\gamma \rho ^{\sigma /\gamma }}\int_{0}^{\infty }\frac{e^{-\frac{%
\alpha }{\rho }u}u^{\frac{\sigma }{\gamma }-1}}{e^{u}-e^{-u}}du \\
&=&\frac{2}{\gamma \rho ^{\sigma /\gamma }}\int_{0}^{\infty }\frac{e^{-(%
\frac{\alpha }{\rho }+1)u}u^{\frac{\sigma }{\gamma }-1}}{1-e^{-2u}}du \\
&=&\frac{2}{\gamma \rho ^{\sigma /\gamma }}\int_{0}^{\infty
}\sum_{k=0}^{\infty }e^{-(2k+\frac{\alpha }{\rho }+1)u}u^{\frac{\sigma }{%
\gamma }-1}du.
\end{eqnarray*}%
By Lebesgue's term by term theorem (cf. \cite{ZYQ}), setting $v=(2k+\frac{%
\alpha }{\rho }+1)u,$ we have%
\begin{eqnarray}
k(\sigma ) &=&\int_{0}^{\infty }\frac{\csc h(\rho t^{\gamma })}{e^{\alpha
t^{\gamma }}}t^{\sigma -1}dt  \notag \\
&=&\frac{2}{\gamma \rho ^{\sigma /\gamma }}\sum_{k=0}^{\infty
}\int_{0}^{\infty }e^{-(2k+\frac{\alpha }{\rho }+1)u}u^{\frac{\sigma }{%
\gamma }-1}du  \notag \\
&=&\frac{2}{\gamma \rho ^{\sigma /\gamma }}\sum_{k=0}^{\infty }\frac{1}{(2k+%
\frac{\alpha }{\rho }+1)^{\sigma /\gamma }}\int_{0}^{\infty }e^{-v}v^{\frac{%
\sigma }{\gamma }-1}dv  \notag \\
&=&\frac{2\Gamma (\frac{\sigma }{\gamma })}{\gamma (2\rho )^{\sigma /\gamma }%
}\sum_{k=0}^{\infty }\frac{1}{(k+\frac{\alpha +\rho }{2\rho })^{\sigma
/\gamma }}  \notag \\
&=&\frac{2\Gamma (\frac{\sigma }{\gamma })}{\gamma (2\rho )^{\sigma /\gamma }%
}\zeta (\frac{\sigma }{\gamma },\frac{\alpha +\rho }{2\rho })\in \mathbf{R}%
_{+},  \label{2.2}
\end{eqnarray}%
where 
$$\zeta (s,a):=\sum_{k=0}^{\infty }\frac{1}{(k+a)^{s}}\ \ (\textit{Re}%
(s)>1,a>0)$$ 
is called the extended Riemann-zeta function (also known as the Hurwitz zeta function)\footnote{Clearly $\zeta (s,1)=\zeta
(s)$, where $\zeta(s)$ is the Riemann-zeta function.}, and%
\begin{equation*}
\Gamma (y):=\int_{0}^{\infty }e^{-v}v^{y-1}dv^{{}}\ (y>0)
\end{equation*}%
is called Gamma function (cf. \cite{Y4b}).

\noindent In particular, for $\alpha =\rho ,$ we have $$h(t)=\frac{\csc h(\rho
t^{\gamma })}{e^{\rho t^{\gamma }}}\ \ \text{and}\ \ k(\sigma )=k_{1}(\sigma ):=\frac{%
2\Gamma (\frac{\sigma }{\gamma })}{\gamma (2\rho )^{\sigma /\gamma }}\zeta (%
\frac{\sigma }{\gamma }).$$ 
In this case, for $\gamma =\frac{\sigma }{2},$ we have 
$$%
h(t)=\frac{\csc h(\rho t^{\sigma /2})}{e^{\rho t^{\sigma /2}}}\ \ \text{and}\ \ 
k(\sigma )=\frac{\pi ^{2}}{6\sigma \rho ^{2}}.$$

\noindent (ii) We obtain for $u>0$ that 
$$\frac{1}{1-e^{-2u}}>0,\ \ (\frac{1}{1-e^{-2u}}%
)^{\prime }=-\frac{2e^{-2u}}{(1-e^{-2u})^{2}}<0,$$ 
and%
\begin{equation*}
(\frac{1}{1-e^{-2u}})^{\prime \prime }=\frac{4e^{-2u}}{(1-e^{-2u})^{2}}+%
\frac{8e^{-4u}}{(1-e^{-2u})^{3}}>0.
\end{equation*}

(iii) If $g(u)>0,g^{\prime }(u)<0,g^{\prime \prime }(u)>0,$ then for $%
0<\gamma \leq 1,$ we find that $g(\rho t^{\gamma })>0,$ $\frac{d}{dt}g(\rho
t^{\gamma })=\rho \gamma t^{\gamma -1}g^{\prime }(\rho t^{\gamma })<0,$ and
\begin{equation*}
\frac{d^{2}}{dt^{2}}g(\rho t^{\gamma })=\rho \gamma (\gamma -1)t^{\gamma
-2}g^{\prime }(\rho t^{\gamma })+(\rho \gamma t^{\gamma -1})^{2}g^{\prime
\prime }(\rho t^{\gamma })>0.
\end{equation*}%
Then we find that for $y\in (n-\frac{1}{2},n),$
$$g(V(y)-\beta )>0,\frac{d}{dy}%
g(V(y)-\beta )=g^{\prime }(V(y)-\beta )\nu _{n}<0,$$ and%
\begin{equation*}
\frac{d^{2}}{dy^{2}}g(V(y)-\beta )=g^{\prime \prime }(V(y)-\beta )\nu
_{n}^{2}>0\ (n\in \mathbf{N});
\end{equation*}%
for $y\in (n,n+\frac{1}{2}),$
$$g(V(y)-\beta )>0,\frac{d}{dy}g(V(y)-\beta
)=g^{\prime }(V(y)-\beta )\nu _{n+1}<0,$$ and
\begin{equation*}
\frac{d^{2}}{dy^{2}}g(V(y)-\beta )=g^{\prime \prime }(V(y)-\beta )\nu
_{n+1}^{2}>0\ (n\in \mathbf{N}).
\end{equation*}

If $g_{1}(u)>0,g_{1}^{\prime }(u)<0,g_{1}^{\prime \prime
}(u)>0,g_{2}(u)>0,g_{2}^{\prime }(u)\leq 0,g_{2}^{\prime \prime }(u)\geq 0,$
then we find for $u>0$ that 
$$g_{1}(u)g_{2}(u)>0,(g_{1}(u)g_{2}(u))^{\prime
}=g_{1}^{\prime }(u)g_{2}(u)+g_{1}(u)g_{2}^{\prime }(u)<0,$$ and
\begin{equation*}
(g_{1}(u)g_{2}(u))^{\prime \prime }=g_{1}^{\prime \prime
}(u)g_{2}(u)+2g_{1}^{\prime }(u)g_{2}^{\prime }(u)+g_{1}(u)g_{2}^{\prime
\prime }(u)>0.
\end{equation*}

(iv) For $\rho >\max \{0,-\alpha \},0<\gamma <\sigma \leq 1,$ we have $$%
h(t)>0,h^{\prime }(t)<0,\ \ h^{\prime \prime }(t)>0,\ \ \text{with}\ k(\sigma )\in
\mathbf{R}_{+},$$ 
and then for $c>0,\beta \leq \frac{\nu _{1}}{2},y\geq \frac{%
1}{2},n\in \mathbf{N}$, we have 
$$h(c(V(y)-\beta ))(V(y)-\beta )^{\sigma
-1}>0,\ \ \frac{d}{dy}[h(c(V(y)-\beta ))(V(y)-\beta )^{\sigma -1}]<0,$$ and%
\begin{equation*}
\frac{d^{2}}{dy^{2}}[h(c(V(y)-\beta ))(V(y)-\beta )^{\sigma -1}]>0\ \ (y\in (n-%
\frac{1}{2},n)\cup (n,n+\frac{1}{2})).
\end{equation*}

Setting $f(y)=h(c(V(y)-\beta ))(V(y)-\beta )^{\sigma -1}$, it follows that $%
f^{\prime }(y)(<0)$ is strictly increasing in $(n-\frac{1}{2},n)$ and
\begin{eqnarray*}
\lim_{x\rightarrow n-}f^{\prime }(y) &=&f^{\prime }(n-0)=[ch^{\prime
}(c(V_{n}-\beta ))(V_{n}-\beta )^{\sigma -1} \\
&&+(\sigma -1)h(c(V_{n}-\beta ))(V_{n}-\beta )^{\sigma -2}]\nu _{n}.
\end{eqnarray*}%
In the same way, for $x\in (n,n+\frac{1}{2}),$ we find that $f^{\prime
}(y)(<0)$ is strictly increasing and%
\begin{eqnarray*}
\lim_{x\rightarrow n+}f^{\prime }(y) &=&f^{\prime }(n+0)=[ch^{\prime
}(c(V_{n}-\beta ))(V_{n}-\beta )^{\sigma -1} \\
&&+(\sigma -1)h(c(V_{n}-\beta ))(V_{n}-\beta )^{\sigma -2}]\nu _{n+1}.
\end{eqnarray*}

In view of $\nu _{n+1}\leq \nu _{n},$ it follows that%
\begin{equation*}
\lim_{x\rightarrow n+}f^{\prime }(x)=f^{\prime }(n+0)\geq f^{\prime
}(n-0)=\lim_{x\rightarrow n-}f^{\prime }(x).
\end{equation*}%
Then by (\ref{2.1}), for $n\in \mathbf{N},$ we have%
\begin{equation}
f(n)<\int_{n-\frac{1}{2}}^{n+\frac{1}{2}}f(y)dy=\int_{n-\frac{1}{2}}^{n+%
\frac{1}{2}}h(c(V(y)-\beta ))(V(y)-\beta )^{\sigma -1}dy.  \label{2.3}
\end{equation}

\begin{lemma}
  If $g(t)(>0)$ is a strictly decreasing continuous function
in $\mathbf{(}\frac{1}{2},\infty ),$ which is strictly convex satisfying 
$$%
\int_{\frac{1}{2}}^{\infty }g(t)dt\in \mathbf{R}_{+},$$ then we have%
\begin{equation}
\int_{1}^{\infty }g(t)dt<\sum_{n=1}^{\infty }g(n)<\int_{\frac{1}{2}}^{\infty
}g(t)dt.  \label{2.4}
\end{equation}
\end{lemma}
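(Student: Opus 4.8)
The plan is to exploit the strict convexity and monotonicity of $g$ to sandwich the series $\sum_{n=1}^\infty g(n)$ between two integrals via an integral-comparison argument, using the Hermite--Hadamard inequality (\ref{2.1}) for the lower bound and a standard convexity/monotonicity estimate for the upper bound. Throughout, the hypothesis $\int_{1/2}^\infty g(t)\,dt\in\mathbf{R}_+$ guarantees all the quantities below are finite, and strict decrease gives $g(n)>0$ for all $n\in\mathbf{N}$.

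\textbf{Upper bound.} First I would prove $\sum_{n=1}^\infty g(n)<\int_{1/2}^\infty g(t)\,dt$. The idea is to compare each term $g(n)$ with the integral of $g$ over the unit interval $(n-\tfrac12,n+\tfrac12)$ centered at $n$. Since $g$ is continuous on $(\tfrac12,\infty)$ and strictly convex there (in particular on each closed subinterval $[n-\tfrac12,n+\tfrac12]$ for $n\ge1$), and since a strictly convex function lies strictly below the chord on the interior, a direct application of Hermite--Hadamard's inequality in the form (\ref{2.1}) with $a=n$ yields
\begin{equation*}
g(n)<\int_{n-\frac12}^{n+\frac12}g(t)\,dt\qquad(n\in\mathbf{N}).
\end{equation*}
(For $n=1$ the interval is $[\tfrac12,\tfrac32]$, which is exactly where the hypotheses apply; one should note $g$ need only be continuous on the half-open interval, but by strict convexity $g$ extends continuously to the closed interval, or one simply works on $[\tfrac12+\varepsilon,\tfrac32]$ and lets $\varepsilon\to0^+$, which is the only mildly delicate point.) Summing over $n\ge1$ and using that the intervals $(n-\tfrac12,n+\tfrac12)$ tile $(\tfrac12,\infty)$ up to a null set gives $\sum_{n=1}^\infty g(n)<\int_{1/2}^\infty g(t)\,dt$, and the sum is finite.

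\textbf{Lower bound.} Next I would prove $\int_1^\infty g(t)\,dt<\sum_{n=1}^\infty g(n)$. Here the natural comparison is $g(n)>\int_n^{n+1}g(t)\,dt$, which holds because $g$ is strictly decreasing: for $t\in(n,n+1)$ we have $g(t)<g(n)$, so integrating over $(n,n+1)$ gives $\int_n^{n+1}g(t)\,dt<g(n)$. Summing over $n\ge1$ yields $\int_1^\infty g(t)\,dt<\sum_{n=1}^\infty g(n)$. (Strict convexity is not needed for this half; strict monotonicity suffices, and the inequality is strict because $g$ is not constant on any interval.) Combining the two displays gives (\ref{2.4}).

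\textbf{Main obstacle.} The only real subtlety is the boundary behavior at $t=\tfrac12$ in the upper bound: the hypothesis only gives continuity of $g$ on the open interval $(\tfrac12,\infty)$, so applying (\ref{2.1}) on $[\tfrac12,\tfrac32]$ requires knowing $g$ is continuous (equivalently, finite) at the left endpoint. Strict convexity of $g$ on $(\tfrac12,\infty)$ forces $g$ to have a (possibly infinite) limit as $t\to\tfrac12^+$; if that limit is finite we extend $g$ continuously and apply Lemma~2.1 directly, and if it is $+\infty$ we instead bound $g(1)<\int_{1/2}^{3/2}g(t)\,dt$ by a limiting argument on $[\tfrac12+\varepsilon,\tfrac32]$ using convexity (the chord estimate still gives $g(1)$ strictly below the average), letting $\varepsilon\to0^+$ and invoking $\int_{1/2}^{3/2}g(t)\,dt\le\int_{1/2}^\infty g<\infty$ to justify passing to the limit. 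Everything else is a routine assembly of term-by-term integral comparisons.
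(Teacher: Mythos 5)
Your proposal is correct and follows essentially the same route as the paper: the upper bound via Hermite--Hadamard's inequality (\ref{2.1}) centered at each $n$, giving $g(n)<\int_{n-\frac12}^{n+\frac12}g(t)\,dt$, and the lower bound via strict decrease, giving $\int_{n}^{n+1}g(t)\,dt<g(n)$, followed by summation. Your extra care about the endpoint behavior at $t=\tfrac12$ is a reasonable refinement of a point the paper passes over silently, but it does not change the argument.
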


\begin{proof}
 By (\ref{2.1}) and the decreasing property, we have
\begin{equation*}
\int_{n}^{n+1}g(t)dt<\int_{n}^{n+1}g(n)dt=g(n)<\int_{n-\frac{1}{2}}^{n+\frac{%
1}{2}}g(t)dt\ \ (n\in \mathbf{N}),
\end{equation*}%
and for $n_{0}\in \mathbf{N},$ it follows that%
\begin{eqnarray*}
\int_{1}^{n_{0}+1}g(t)dt &<&\sum_{n=1}^{n_{0}}g(n)<\sum_{n=1}^{n_{0}}\int_{n-%
\frac{1}{2}}^{n+\frac{1}{2}}g(t)dt=\int_{\frac{1}{2}}^{n_{0}+\frac{1}{2}%
}g(t)dt, \\
\int_{n_{0}+1}^{\infty }g(t)dt &\leq &\sum_{n=n_{0}+1}^{\infty }g(n)\leq
\int_{n_{0}+\frac{1}{2}}^{\infty }g(t)dt<\infty .
\end{eqnarray*}
Hence, we obtain (\ref{2.4}).
\end{proof}

\begin{lemma}
  If $\rho >\max \{0,-\alpha \},0<\gamma <\sigma
\leq 1,$ define the following weight coefficients:%
\begin{eqnarray}
\omega _{\delta }(\sigma ,x) &:&=\sum_{n=1}^{\infty }\frac{\csc h(\rho
U^{\delta \gamma }(x)(V_{n}-\beta )^{\gamma })}{e^{\alpha U^{\delta \gamma
}(x)(V_{n}-\beta )^{\gamma }}}\frac{U^{\delta \sigma }(x)\nu _{n+1}}{%
(V_{n}-\beta )^{1-\sigma }},\ \ x\in \mathbf{R}_{+},  \label{2.5} \\
\varpi _{\delta }(\sigma ,n) &:&=\int_{0}^{\infty }\frac{\csc h(\rho
U^{\delta \gamma }(x)(V_{n}-\beta )^{\gamma })}{e^{\alpha U^{\delta \gamma
}(x)(V_{n}-\beta )^{\gamma }}}\frac{(V_{n}-\beta )^{\sigma }\mu (x)}{%
U^{1-\delta \sigma }(x)}dx,\ \ n\in \mathbf{N}.  \label{2.6}
\end{eqnarray}%
Then, we have the following inequalities:
\begin{eqnarray}
\omega _{\delta }(\sigma ,x) &<&k(\sigma )\ \ (x\in \mathbf{R}_{+}),  \label{2.7}
\\
\varpi _{\delta }(\sigma ,n) &\leq &k(\sigma )\ \ (n\in \mathbf{N}),  \label{2.8}
\end{eqnarray}%
where, $k(\sigma )$ is indicated by (\ref{2.2}).
\end{lemma}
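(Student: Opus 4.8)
The two bounds call for different arguments: \eqref{2.8} is a single change of variables, while \eqref{2.7} is where the normalization $\beta\le\nu_1/2$ and the decreasing hypothesis on $\{\nu_n\}$ are genuinely used, in combination with the Hermite--Hadamard estimate \eqref{2.3}.

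For \eqref{2.8}, the plan is to fix $n\in\mathbf{N}$ and substitute $t=U^{\delta}(x)(V_{n}-\beta)$ in \eqref{2.6}. Since $U'(x)=\mu(x)$ and $\delta\in\{-1,1\}$, one gets $\mu(x)\,dx=\dfrac{dt}{\delta U^{\delta-1}(x)(V_{n}-\beta)}$ and $U(x)=(t/(V_{n}-\beta))^{1/\delta}$; after substitution all powers of $(V_{n}-\beta)$ cancel and the integrand collapses to $\tfrac1\delta h(t)t^{\sigma-1}$. Tracking the limits, for $\delta=1$ this leaves $\varpi_{1}(\sigma,n)=\int_{0}^{U(\infty-)(V_{n}-\beta)}h(t)t^{\sigma-1}\,dt$ and for $\delta=-1$ it leaves $\varpi_{-1}(\sigma,n)=\int_{(V_{n}-\beta)/U(\infty-)}^{\infty}h(t)t^{\sigma-1}\,dt$, where $U(\infty-):=\lim_{x\to\infty}U(x)\in(0,\infty]$. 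In either case the range is a subinterval of $(0,\infty)$ and $h(t)t^{\sigma-1}\ge0$, so $\varpi_{\delta}(\sigma,n)\le\int_{0}^{\infty}h(t)t^{\sigma-1}\,dt=k(\sigma)$ by \eqref{2.2}, which is \eqref{2.8}.

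For \eqref{2.7}, the plan is: fix $x\in\mathbf{R}_{+}$, set $c:=U^{\delta}(x)>0$, and observe $U^{\delta\gamma}(x)(V_{n}-\beta)^{\gamma}=(c(V_{n}-\beta))^{\gamma}$, so that $\omega_{\delta}(\sigma,x)=c^{\sigma}\sum_{n\ge1}h(c(V_{n}-\beta))(V_{n}-\beta)^{\sigma-1}\nu_{n+1}$, and the function $f(y)=h(c(V(y)-\beta))(V(y)-\beta)^{\sigma-1}$ is exactly the one for which \eqref{2.3} was established. By \eqref{2.3}, $f(n)<\int_{n-1/2}^{n+1/2}f(y)\,dy$. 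I would then multiply by $\nu_{n+1}>0$ and use that on $(n-\tfrac12,n)$ one has $V'(y)=\nu_{n}\ge\nu_{n+1}$ while on $(n,n+\tfrac12)$ one has $V'(y)=\nu_{n+1}$ (here the decreasing hypothesis enters), together with $f\ge0$ on $[\tfrac12,\infty)$, to get $\nu_{n+1}\int_{n-1/2}^{n+1/2}f(y)\,dy\le\int_{n-1/2}^{n+1/2}f(y)V'(y)\,dy$; the change of variable $v=V(y)-\beta$ turns the last integral into $\int_{V(n-1/2)-\beta}^{V(n+1/2)-\beta}h(cv)v^{\sigma-1}\,dv$. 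Since the intervals $[V(n-\tfrac12)-\beta,\,V(n+\tfrac12)-\beta]$ ($n\ge1$) are consecutive and $V(\tfrac12)=\nu_{1}/2$, summing in $n$ bounds $\sum_{n\ge1}h(c(V_{n}-\beta))(V_{n}-\beta)^{\sigma-1}\nu_{n+1}$ by $\int_{\nu_{1}/2-\beta}^{\infty}h(cv)v^{\sigma-1}\,dv$; because $\beta\le\nu_{1}/2$ the lower limit is $\ge0$, so this is at most $\int_{0}^{\infty}h(cv)v^{\sigma-1}\,dv=c^{-\sigma}k(\sigma)$ (substitute $t=cv$ and use \eqref{2.2}). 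Multiplying by $c^{\sigma}$ gives $\omega_{\delta}(\sigma,x)<k(\sigma)$, the strictness coming from \eqref{2.3}.

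The main obstacle is the $\omega$-estimate. The sum defining $\omega_{\delta}(\sigma,x)$ carries the weight $\nu_{n+1}$ rather than a bare value of $f$, so \eqref{2.3} alone does not control it; the trick is to trade $\nu_{n+1}\,dy$ for $V'(y)\,dy$, which is legitimate only because $\{\nu_n\}$ is decreasing (so $V'\ge\nu_{n+1}$ also on the left half-interval $(n-\tfrac12,n)$), and then to verify that after passing to the variable $v=V(y)-\beta$ the resulting comparison integral starts at $\nu_{1}/2-\beta$, which is $\ge0$ precisely because of the normalization $\beta\le\nu_{1}/2$ — for larger $\beta$ the integral would reach into $v<0$, where $h(cv)v^{\sigma-1}$ is meaningless. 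By contrast, \eqref{2.8} is routine once the substitution is set up, and no convexity of $h$ beyond what is already encoded in \eqref{2.3} is needed.
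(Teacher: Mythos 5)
Your proposal is correct and follows essentially the same route as the paper: for \eqref{2.8} the single substitution $u=U^{\delta}(x)(V_n-\beta)$ with a case split on $\delta=\pm1$ and positivity of the integrand, and for \eqref{2.7} the Hermite--Hadamard estimate \eqref{2.3}, the pointwise trade $\nu_{n+1}\le V'(y)$ on the half-intervals (using that $\{\nu_n\}$ is decreasing), and the observation that the resulting comparison integral starts at $\nu_1/2-\beta\ge0$. The only difference is cosmetic (you factor out $c^{\sigma}=U^{\delta\sigma}(x)$ before substituting, while the paper carries it through and substitutes at the end).
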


\begin{proof}
  Since $V_{n}=V(n),$ and for $t\in (n-\frac{1}{2},n),$ $$\nu
_{n+1}\leq \nu _{n}=V^{\prime }(t);$$ for $t\in (n,n+\frac{1}{2}),$ $$\nu
_{n+1}=V^{\prime }(t),$$ by (\ref{2.3}) (for $c=U^{\delta }(x)$), we have
\begin{eqnarray*}
&&\frac{\csc h(\rho U^{\delta \gamma }(x)(V_{n}-\beta )^{\gamma })}{%
e^{\alpha U^{\delta \gamma }(x)(V_{n}-\beta )^{\gamma }}}\frac{U^{\delta
\sigma }(x)}{(V_{n}-\beta )^{1-\sigma }} \\
&=&\frac{\csc h(\rho U^{\delta \gamma }(x)(V(n)-\beta )^{\gamma })}{%
e^{\alpha U^{\delta \gamma }(x)(V(n)-\beta )^{\gamma }}}\frac{U^{\delta
\sigma }(x)}{(V(n)-\beta )^{1-\sigma }} \\
&<&\int_{n-\frac{1}{2}}^{n+\frac{1}{2}}\frac{\csc h(\rho U^{\delta \gamma
}(x)(V(t)-\beta )^{\gamma })}{e^{\alpha U^{\delta \gamma }(x)(V(t)-\beta
)^{\gamma }}}\frac{U^{\delta \sigma }(x)}{(V(t)-\beta )^{1-\sigma }}dt\ \ (n\in
\mathbf{N}),
\end{eqnarray*}%
\begin{eqnarray*}
\omega _{\delta }(\sigma ,x) &<&\sum_{n=1}^{\infty }\nu _{n+1}\int_{n-\frac{1%
}{2}}^{n+\frac{1}{2}}\frac{\csc h(\rho U^{\delta \gamma }(x)(V(t)-\beta
)^{\gamma })}{e^{\alpha U^{\delta \gamma }(x)(V(t)-\beta )^{\gamma }}}\frac{%
U^{\delta \sigma }(x)dt}{(V(t)-\beta )^{1-\sigma }} \\
&\leq &\sum_{n=1}^{\infty }\int_{n-\frac{1}{2}}^{n+\frac{1}{2}}\frac{\csc
h(\rho U^{\delta \gamma }(x)(V(t)-\beta )^{\gamma })}{e^{\alpha U^{\delta
\gamma }(x)(V(t)-\beta )^{\gamma }}}\frac{U^{\delta \sigma }(x)V^{\prime }(t)%
}{(V(t)-\beta )^{1-\sigma }}dt \\
&=&\int_{\frac{1}{2}}^{\infty }\frac{\csc h(\rho U^{\delta \gamma
}(x)(V(t)-\beta )^{\gamma })}{e^{\alpha U^{\delta \gamma }(x)(V(t)-\beta
)^{\gamma }}}\frac{U^{\delta \sigma }(x)V^{\prime }(t)}{(V(t)-\beta
)^{1-\sigma }}dt.
\end{eqnarray*}%
Setting $u=U^{\delta }(x)(V(t)-\beta ),$ by (\ref{2.2}), we obtain%
\begin{eqnarray*}
\omega _{\delta }(\sigma ,x) &<&\int_{U^{\delta }(x)(\frac{\nu _{1}}{2}%
-\beta )}^{U^{\delta }(x)V(\infty )}\frac{\csc h(\rho u^{\gamma })}{%
e^{\alpha u^{\gamma }}}\frac{U^{\delta \sigma }(x)U^{-\delta }(x)}{%
(uU^{-\delta }(x))^{1-\sigma }}du \\
&\leq &\int_{0}^{\infty }\frac{\csc h(\rho u^{\gamma })}{e^{\alpha u^{\gamma
}}}u^{\sigma -1}du=k(\sigma ).
\end{eqnarray*}%
Hence, (\ref{2.7}) follows.

Setting $u=(V_{n}-\beta )U^{\delta }(x)$ in (\ref{2.6}), we find $du=\delta
(V_{n}-\beta )U^{\delta -1}(x)\mu (x)dx$ and
\begin{eqnarray*}
\varpi _{\delta }(\sigma ,n) &=&\frac{1}{\delta }\int_{(V_{n}-\beta
)U^{\delta }(0)}^{(V_{n}-\beta )U^{\delta }(\infty )}\frac{\csc h(\rho
u^{\gamma })}{e^{\alpha u^{\gamma }}}\frac{(V_{n}-\beta )^{\sigma
-1}[(V_{n}-\beta )^{-1}u]^{\frac{1}{\delta }-1}}{[(V_{n}-\beta )^{-1}u]^{%
\frac{1}{\delta }-\sigma }}du \\
&=&\frac{1}{\delta }\int_{(V_{n}-\beta )U^{\delta }(0)}^{(V_{n}-\beta
)U^{\delta }(\infty )}\frac{\csc h(\rho u^{\gamma })}{e^{\alpha u^{\gamma }}}%
u^{\sigma -1}du.
\end{eqnarray*}

\noindent If $\delta =1,$ then%
\begin{equation*}
\varpi _{1}(\sigma ,n)=\int_{0}^{(V_{n}-\beta )U(\infty )}\frac{\csc h(\rho
u^{\gamma })}{e^{\alpha u^{\gamma }}}u^{\sigma -1}du\leq \int_{0}^{\infty }%
\frac{\csc h(\rho u^{\gamma })}{e^{\alpha u^{\gamma }}}u^{\sigma -1}du;
\end{equation*}%
if $\delta =-1,$ then%
\begin{equation*}
\varpi _{-1}(\sigma ,n)=-\int_{\infty }^{(V_{n}-\beta )U^{-1}(\infty )}\frac{%
\csc h(\rho u^{\gamma })}{e^{\alpha u^{\gamma }}}u^{\sigma -1}du\leq
\int_{0}^{\infty }\frac{\csc h(\rho u^{\gamma })}{e^{\alpha u^{\gamma }}}%
u^{\sigma -1}du.
\end{equation*}%
Hence, by (\ref{2.2}), we have (\ref{2.8}).
\end{proof}

\begin{remark}
  We do not need the condition of $\sigma \leq 1$ in
obtaining (\ref{2.8}). If $U(\infty )=\infty ,$ then we have
\begin{equation}
\varpi _{\delta }(\sigma ,n)=k(\sigma )\ \ (n\in \mathbf{N}).  \label{2.9}
\end{equation}
\end{remark}

For example, we set $\mu (t)=\frac{1}{(1+t)^{a}}\ (t>0;0\leq a\leq 1),$ then
for $x\geq 0,$ we find
\begin{equation*}
U(x)=\int_{0}^{x}\frac{dt}{(1+t)^{a}}=\left\{
\begin{array}{c}
\frac{(1+x)^{1-a}-1}{1-a},0\leq a<1 \\
\ln (1+x),a=1%
\end{array}%
\right. <\infty ,
\end{equation*}%
$U(0)=0$ and $U(\infty )=\int_{0}^{\infty }\frac{dt}{(1+t)^{a}}=\infty .$

\begin{lemma}
  If $\rho >\max \{0,-\alpha \},0<\gamma <\sigma \leq
1,V(\infty )=\infty ,$ then, (i) for $x\in \mathbf{R}_{+}\mathbf{,}$ we have
\begin{equation}
k(\sigma )(1-\theta _{\delta }(\sigma ,x))<\omega _{\delta }(\sigma ,x),
\label{2.10}
\end{equation}%
where,
\begin{eqnarray*}
\theta _{\delta }(\sigma ,x) &:=&\frac{1}{k(\sigma )}\int_{0}^{U^{\delta
}(x)(\nu _{1}-\beta )}\frac{\csc h(\rho u^{\gamma })}{e^{\alpha u^{\gamma }}}%
u^{\sigma -1}du \\
&=&O((U(x))^{\frac{\delta }{2}(\sigma -\gamma )})\in (0,1);
\end{eqnarray*}%
(ii) for any $b>0,$ we have%
\begin{equation}
\sum_{n=1}^{\infty }\frac{\nu _{n+1}}{(V_{n}-\beta )^{1+b}}=\frac{1}{b}\left[
\frac{1}{(\nu _{1}-\beta )^{b}}+bO(1)\right] .  \label{2.11}
\end{equation}
\end{lemma}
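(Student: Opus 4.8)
For part (i) I would argue by integral comparison, exactly as in the derivation of (\ref{2.7}) and of (\ref{2.4}) but keeping the lower estimate. Set, for $s>\beta$,
$G(s):=U^{\delta\sigma}(x)\,\frac{\csc h(\rho U^{\delta\gamma}(x)(s-\beta)^{\gamma})}{e^{\alpha U^{\delta\gamma}(x)(s-\beta)^{\gamma}}}\,(s-\beta)^{\sigma-1}=U^{\delta\sigma}(x)\,h\!\left(U^{\delta}(x)(s-\beta)\right)(s-\beta)^{\sigma-1}$,
with $h$ as in the Example, so that $\omega_{\delta}(\sigma,x)=\sum_{n=1}^{\infty}\nu_{n+1}\,G(V_{n})$; each term makes sense because $V_{n}\ge V_{1}=\nu_{1}>\beta$ (as $\beta\le\nu_{1}/2<\nu_{1}$). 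Since $h>0,\,h'<0,\,h''>0$ and $\sigma\le1$, the map $w\mapsto h(w)w^{\sigma-1}$ is strictly decreasing on $(0,\infty)$, hence $G$ is strictly decreasing on $(\beta,\infty)$. For $t\in(n,n+1)$ we have $V'(t)=\nu_{n+1}$ and $V(t)>V_{n}$, so $G(V(t))<G(V_{n})$ and $\int_{V_{n}}^{V_{n+1}}G(s)\,ds=\int_{n}^{n+1}G(V(t))V'(t)\,dt<\nu_{n+1}G(V_{n})$; summing over $n\ge1$ and using $V(\infty)=\infty$ gives $\int_{V_{1}}^{\infty}G(s)\,ds<\omega_{\delta}(\sigma,x)$. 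The substitution $u=U^{\delta}(x)(s-\beta)$ cancels all powers of $U$ (because $U^{\delta\sigma}(x)\,U^{-\delta(\sigma-1)}(x)\,U^{-\delta}(x)=1$) and yields $\int_{V_{1}}^{\infty}G(s)\,ds=\int_{U^{\delta}(x)(\nu_{1}-\beta)}^{\infty}\frac{\csc h(\rho u^{\gamma})}{e^{\alpha u^{\gamma}}}u^{\sigma-1}\,du=k(\sigma)-\int_{0}^{U^{\delta}(x)(\nu_{1}-\beta)}\frac{\csc h(\rho u^{\gamma})}{e^{\alpha u^{\gamma}}}u^{\sigma-1}\,du=k(\sigma)\bigl(1-\theta_{\delta}(\sigma,x)\bigr)$ by (\ref{2.2}) and the definition of $\theta_{\delta}$, which is (\ref{2.10}). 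Here $\theta_{\delta}(\sigma,x)\in(0,1)$ since the integrand is positive with integral $k(\sigma)$ over $(0,\infty)$. For the order estimate: near $u=0$ one has $\csc h(\rho u^{\gamma})\sim(\rho u^{\gamma})^{-1}$ and $e^{-\alpha u^{\gamma}}\to1$, so the integrand is $\le Cu^{\sigma-\gamma-1}$ on a fixed $(0,u_{0}]$, whence $\int_{0}^{M}\frac{\csc h(\rho u^{\gamma})}{e^{\alpha u^{\gamma}}}u^{\sigma-1}\,du\le\frac{C}{\sigma-\gamma}M^{\sigma-\gamma}$ for small $M$, and since the full integral is $\le k(\sigma)$ this (with a larger constant) persists for all $M>0$; taking $M=U^{\delta}(x)(\nu_{1}-\beta)$ and using $\theta_{\delta}<1$ gives $\theta_{\delta}(\sigma,x)=O\!\bigl((U(x))^{\delta(\sigma-\gamma)/2}\bigr)$.

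For part (ii) put $p(s):=(s-\beta)^{-(1+b)}$, which is positive and strictly decreasing on $(\beta,\infty)$, and recall $V$ is strictly increasing with $V(\infty)=\infty$, $\nu_{n+1}\le\nu_{n}$ and $\nu_{1}>\beta$. For the lower bound, if $t\in(n,n+1)$ then $V'(t)=\nu_{n+1}$ and $V(t)>V_{n}\ge\nu_{1}>\beta$, so $\int_{V_{n}}^{V_{n+1}}p(s)\,ds=\nu_{n+1}\int_{n}^{n+1}p(V(t))\,dt<\nu_{n+1}p(V_{n})=\frac{\nu_{n+1}}{(V_{n}-\beta)^{1+b}}$; summing over $n\ge1$, $\frac{1}{b(\nu_{1}-\beta)^{b}}=\int_{\nu_{1}}^{\infty}p(s)\,ds<\sum_{n=1}^{\infty}\frac{\nu_{n+1}}{(V_{n}-\beta)^{1+b}}$. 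For the upper bound I would split off the $n=1$ term; for $n\ge2$ and $t\in(n-1,n)$ we have $V'(t)=\nu_{n}$ and $\beta<\nu_{1}<V(t)<V_{n}$, hence $\frac{\nu_{n+1}}{(V_{n}-\beta)^{1+b}}=\nu_{n+1}p(V_{n})\le\nu_{n}p(V_{n})<\nu_{n}p(V(t))=V'(t)p(V(t))$, and integrating over $(n-1,n)$ and summing over $n\ge2$ gives $\sum_{n=2}^{\infty}\frac{\nu_{n+1}}{(V_{n}-\beta)^{1+b}}<\int_{V_{1}}^{\infty}p(s)\,ds=\frac{1}{b(\nu_{1}-\beta)^{b}}$. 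Combining, $\frac{1}{b(\nu_{1}-\beta)^{b}}<\sum_{n=1}^{\infty}\frac{\nu_{n+1}}{(V_{n}-\beta)^{1+b}}<\frac{1}{b(\nu_{1}-\beta)^{b}}+\frac{\nu_{2}}{(\nu_{1}-\beta)^{1+b}}$, which is exactly (\ref{2.11}), the $O(1)$ quantity lying in $\bigl(0,\tfrac{\nu_{2}}{b(\nu_{1}-\beta)^{1+b}}\bigr)$ after factoring out $\tfrac1b$.

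The argument uses no hard estimate: everything reduces to the facts $h>0,\,h'<0,\,h''>0$ recorded in the Example, the monotonicity of $\mu,\nu,V$, and $V(\infty)=\infty$. The two points that need genuine care are (a) keeping $s-\beta$ (equivalently $V(t)-\beta$) strictly positive wherever a non‑integer power of it occurs — which is precisely why all sums and integrals start at $V_{1}=\nu_{1}$ and why the hypothesis $\beta\le\nu_{1}/2$ is invoked; and (b) in the upper bound of (ii), the replacement $\nu_{n+1}\le\nu_{n}=V'|_{(n-1,n)}$ — available only because $\{\nu_{n}\}$ is decreasing — which is what lets the one‑sided integral comparison close. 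I would also double‑check that the exponent $\tfrac{\delta}{2}(\sigma-\gamma)$ is the one actually needed in the later best‑constant argument; the computation above in fact yields the sharper $O\!\bigl((U(x))^{\delta(\sigma-\gamma)}\bigr)$, from which the stated weaker form follows using $\theta_{\delta}<1$.
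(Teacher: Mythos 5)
Your proposal is correct and follows essentially the same route as the paper: a monotone integral comparison of the sum $\sum_{n}\nu_{n+1}G(V_{n})$ with $\int_{V_{1}}^{\infty}G(s)\,ds$ followed by the substitution $u=U^{\delta}(x)(s-\beta)$ reducing everything to $k(\sigma)$ minus a tail integral, and the same two-sided comparison for $\sum_{n}\nu_{n+1}(V_{n}-\beta)^{-1-b}$ (the paper uses the half-integer intervals $(n-\tfrac12,n+\tfrac12)$ via Hermite--Hadamard for the upper bound and the global decay bound $F(u)\le Lu^{-(\sigma+\gamma)/2}$ for the $O$-estimate, where you use $(n-1,n)$ with $\nu_{n+1}\le\nu_{n}$ and the local asymptotic $\csc h(\rho u^{\gamma})\sim(\rho u^{\gamma})^{-1}$ plus $\theta_{\delta}<1$ --- immaterial variations). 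The only slip is in your closing parenthetical for (ii): the $O(1)$ term lies in $\bigl(0,\tfrac{\nu_{2}}{(\nu_{1}-\beta)^{1+b}}\bigr)$, not $\bigl(0,\tfrac{\nu_{2}}{b(\nu_{1}-\beta)^{1+b}}\bigr)$; your own displayed bound already gives the correct (bounded as $b\to0^{+}$) version, which is what the best-constant argument needs.
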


\begin{proof}
  By (\ref{2.4}), we find%
\begin{eqnarray*}
\omega _{\delta }(\sigma ,x) &>&\sum_{n=1}^{\infty }\nu _{n+1}\int_{n}^{n+1}%
\frac{\csc h(\rho U^{\delta \gamma }(x)(V(t)-\beta )^{\gamma })}{e^{\alpha
U^{\delta \gamma }(x)(V(t)-\beta )^{\gamma }}}\frac{U^{\delta \sigma }(x)dt}{%
(V(t)-\beta )^{1-\sigma }} \\
&=&\sum_{n=1}^{\infty }\int_{n}^{n+1}\frac{\csc h(\rho U^{\delta \gamma
}(x)(V(t)-\beta )^{\gamma })}{e^{\alpha U^{\delta \gamma }(x)(V(t)-\beta
)^{\gamma }}}\frac{U^{\delta \sigma }(x)V^{\prime }(t)}{(V(t)-\beta
)^{1-\sigma }}dt \\
&=&\int_{1}^{\infty }\frac{\csc h(\rho U^{\delta \gamma }(x)(V(t)-\beta
)^{\gamma })}{e^{\alpha U^{\delta \gamma }(x)(V(t)-\beta )^{\gamma }}}\frac{%
U^{\delta \sigma }(x)V^{\prime }(t)}{(V(t)-\beta )^{1-\sigma }}dt.
\end{eqnarray*}%
Setting $u=U^{\delta }(x)(V(t)-\beta ),$ in view of $V(\infty )=\infty ,$ by
(\ref{2.2}), we find%
\begin{eqnarray*}
\omega _{\delta }(\sigma ,x) &>&\int_{U^{\delta }(x)(V(1)-\beta )}^{\infty }%
\frac{\csc h(\rho u^{\gamma })}{e^{\alpha u^{\gamma }}}u^{\sigma -1}du \\
&=&k(\sigma )-\int_{0}^{U^{\delta }(x)(\nu _{1}-\beta )}\frac{\csc h(\rho
u^{\gamma })}{e^{\alpha u^{\gamma }}}u^{\sigma -1}du \\
&=&k(\sigma )(1-\theta _{\delta }(\sigma ,x)).
\end{eqnarray*}

\noindent Since 
$$F(u)=\frac{\csc h(\rho u^{\gamma })}{e^{\alpha u^{\gamma }}}$$ 
is
continuous in $(0,\infty )$ satisfying $u^{\frac{1}{2}(\sigma +\gamma
)}F(u)\rightarrow 0\ (u\rightarrow 0^{+}),$ and $u^{\frac{1}{2}(\sigma +\gamma
)}F(u)\rightarrow 0\ (u\rightarrow \infty ),$ there exists a constant $L>0,$
such that $u^{\frac{1}{2}(\sigma +\gamma )}F(u)\leq L,$ namely,%
\begin{equation*}
\frac{\csc h(\rho u^{\gamma })}{e^{\alpha u^{\gamma }}}\leq Lu^{\frac{-1}{2}%
(\sigma +\gamma )}\ \ (u\in (0,\infty )).
\end{equation*}
Hence we find%
\begin{eqnarray*}
0 &<&\theta _{\delta }(\sigma ,x)\leq \frac{L}{k(\sigma )}%
\int_{0}^{U^{\delta }(x)(\nu _{1}-\beta )}u^{\frac{1}{2}(\sigma -\gamma
)-1}du \\
&=&\frac{2L[U^{\delta }(x)(\nu _{1}-\beta )]^{\frac{1}{2}(\sigma -\gamma )}}{%
k(\sigma )(\sigma -\gamma )}\ \ (x\in \mathbf{R}_{+}),
\end{eqnarray*}%
and then (\ref{2.10}) follows.

For $b>0,$ we find
\begin{eqnarray*}
\sum_{n=1}^{\infty }\frac{\nu _{n+1}}{(V_{n}-\beta )^{1+b}}
&<&\frac{\nu _{2}}{(V_{1}-\beta )^{1+b}}+\sum_{n=2}^{\infty }\int_{n-\frac{1%
}{2}}^{n+\frac{1}{2}}\frac{V^{\prime }(x)}{(V(x)-\beta )^{1+b}}dx \\
&=&\frac{\nu _{2}}{(\nu _{1}-\beta )^{1+b}}+\int_{\frac{3}{2}}^{\infty }%
\frac{V^{\prime }(x)}{(V(x)-\beta )^{1+b}}dx \\
&=&\frac{\nu _{2}}{(\nu _{1}-\beta )^{1+b}}+\int_{\nu _{1}+\frac{1}{2}\nu
_{2}-\beta }^{\infty }\frac{du}{u^{1+b}} \\
&\leq &\frac{1}{b}\left[ \frac{1}{(\nu _{1}-\beta )^{b}}+b\frac{\nu _{2}}{%
(\nu _{1}-\beta )^{1+b}}\right] ,
\end{eqnarray*}%
\begin{eqnarray*}
\sum_{n=1}^{\infty }\frac{\nu _{n+1}}{(V_{n}-\beta )^{1+b}}
&=&\sum_{n=1}^{\infty }\int_{n}^{n+1}\frac{\nu _{n+1}}{(V(n)-\beta )^{1+b}}%
dx>\sum_{n=1}^{\infty }\int_{n}^{n+1}\frac{V^{\prime }(x)dx}{(V(x)-\beta
)^{1+b}} \\
&=&\int_{1}^{\infty }\frac{V^{\prime }(x)dx}{(V(x)-\beta )^{1+b}}=\frac{1}{%
b(\nu _{1}-\beta )^{b}}.
\end{eqnarray*}%
Hence we have (\ref{2.11}).
\end{proof}

\noindent \textbf{Note}. For example, $\nu _{n}=\frac{1}{n^{a}}\ (n\in \mathbf{N};0\leq
a\leq 1)$ satisfies the condition  that \mbox{$\nu _{n}>0\ (n\in \mathbf{N}%
),$} $\{\nu _{n}\}_{n=1}^{\infty }$ is decreasing, and $V(\infty )=\infty $.

\section{Main Results and Operator Expressions}

\begin{theorem}
  If $\rho >\max \{0,-\alpha \},0<\gamma <\sigma
\leq 1,$ $k(\sigma )$ is indicated by (\ref{2.2}), then for $p>1,$ $%
0<||f||_{p,\Phi _{\delta }},||a||_{q,\Psi _{\beta }}<\infty ,$ we have the
following equivalent inequalities:%
\begin{eqnarray}
I &:=&\sum_{n=1}^{\infty }\int_{0}^{\infty }\frac{\csc h(\rho U^{\delta
\gamma }(x)(V_{n}-\beta )^{\gamma })}{e^{\alpha U^{\delta \gamma
}(x)(V_{n}-\beta )^{\gamma }}}a_{n}f(x)dx<k(\sigma )||f||_{p,\Phi _{\delta
}}||a||_{q,\Psi _{\beta }},  \label{3.1} \\
J_{1} &:=&\sum_{n=1}^{\infty }\frac{\nu _{n+1}}{(V_{n}-\beta )^{1-p\sigma }}%
\left[ \int_{0}^{\infty }\frac{\csc h(\rho U^{\delta \gamma }(x)(V_{n}-\beta
)^{\gamma })}{e^{\alpha U^{\delta \gamma }(x)(V_{n}-\beta )^{\gamma }}}f(x)dx%
\right] ^{p}  \notag \\
&<&k(\sigma )||f||_{p,\Phi _{\delta }},  \label{3.2}
\end{eqnarray}%
\begin{eqnarray}
J_{2} &:=&\left\{ \int_{0}^{\infty }\frac{\mu (x)}{U^{1-q\delta \sigma }(x)}%
\left[ \sum_{n=1}^{\infty }\frac{\csc h(\rho U^{\delta \gamma
}(x)(V_{n}-\beta )^{\gamma })}{e^{\alpha U^{\delta \gamma }(x)(V_{n}-\beta
)^{\gamma }}}a_{n}\right] ^{q}dx\right\} ^{\frac{1}{q}}  \notag \\
&<&k(\sigma )||a||_{q,\Psi _{\beta }}.  \label{3.3}
\end{eqnarray}
\end{theorem}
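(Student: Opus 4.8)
The plan is to prove (\ref{3.1}) directly by Hölder's inequality combined with the weight estimates (\ref{2.7})--(\ref{2.8}), and then to deduce the equivalence with (\ref{3.2}) and (\ref{3.3}). Write $h_{n}(x):=\csc h(\rho U^{\delta\gamma}(x)(V_{n}-\beta)^{\gamma})/e^{\alpha U^{\delta\gamma}(x)(V_{n}-\beta)^{\gamma}}$, so that $I=\sum_{n=1}^{\infty}\int_{0}^{\infty}h_{n}(x)a_{n}f(x)\,dx$. I would insert a normalizing factor and split the summand $h_{n}(x)a_{n}f(x)$ as the product of
$$h_{n}^{1/p}(x)\,\frac{U^{(1-\delta\sigma)/q}(x)\,\nu_{n+1}^{1/p}}{\mu^{1/q}(x)\,(V_{n}-\beta)^{(1-\sigma)/p}}\,f(x)\qquad\text{and}\qquad h_{n}^{1/q}(x)\,\frac{\mu^{1/q}(x)\,(V_{n}-\beta)^{(1-\sigma)/p}}{U^{(1-\delta\sigma)/q}(x)\,\nu_{n+1}^{1/p}}\,a_{n},$$
the exponents being chosen exactly so that, after applying Hölder's inequality with the conjugate exponents $p,q$ and using Tonelli's theorem to interchange $\sum_{n}$ and $\int_{0}^{\infty}$, the first factor becomes $\bigl(\int_{0}^{\infty}\Phi_{\delta}(x)\,\omega_{\delta}(\sigma,x)\,f^{p}(x)\,dx\bigr)^{1/p}$ and the second becomes $\bigl(\sum_{n=1}^{\infty}\Psi_{\beta}(n)\,\varpi_{\delta}(\sigma,n)\,a_{n}^{q}\bigr)^{1/q}$, where $\omega_{\delta},\varpi_{\delta}$ are the weight coefficients of (\ref{2.5})--(\ref{2.6}).

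Now (\ref{2.8}) gives $\varpi_{\delta}(\sigma,n)\le k(\sigma)$, so the second factor is $\le k(\sigma)^{1/q}\|a\|_{q,\Psi_{\beta}}$; and (\ref{2.7}) gives the strict bound $\omega_{\delta}(\sigma,x)<k(\sigma)$ for every $x\in\mathbf{R}_{+}$, so, since $0<\|f\|_{p,\Phi_{\delta}}<\infty$ forces $f$ not to vanish a.e., the first factor is strictly less than $k(\sigma)^{1/p}\|f\|_{p,\Phi_{\delta}}$. Multiplying these two bounds gives (\ref{3.1}), the strict inequality being inherited from (\ref{2.7}); the same chain of estimates incidentally shows $I<\infty$.

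For the equivalences I would argue as follows. Applying a single Hölder inequality in the variable $n$ to $I=\sum_{n}\bigl(\int_{0}^{\infty}h_{n}f\,dx\bigr)a_{n}$, pairing the weights $\nu_{n+1}^{1/p}(V_{n}-\beta)^{(p\sigma-1)/p}$ against their reciprocals, yields $I\le J_{1}^{1/p}\|a\|_{q,\Psi_{\beta}}$, so (\ref{3.2}) implies (\ref{3.1}); conversely, testing (\ref{3.1}) against $a_{n}:=\nu_{n+1}(V_{n}-\beta)^{p\sigma-1}\bigl(\int_{0}^{\infty}h_{n}f\,dx\bigr)^{p-1}$ --- first truncated to $n\le N$ so that $0<\|a\|_{q,\Psi_{\beta}}<\infty$, then letting $N\to\infty$ --- and observing that for this sequence $I=J_{1}=\|a\|_{q,\Psi_{\beta}}^{q}$, one gets $J_{1}\le k(\sigma)\|f\|_{p,\Phi_{\delta}}J_{1}^{1/q}$ with $0<J_{1}<\infty$, and dividing gives (\ref{3.2}). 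The equivalence of (\ref{3.1}) and (\ref{3.3}) is symmetric: Hölder in the variable $x$ applied to $I=\int_{0}^{\infty}f(x)\bigl(\sum_{n}h_{n}a_{n}\bigr)dx$ gives $I\le\|f\|_{p,\Phi_{\delta}}J_{2}$, and testing (\ref{3.1}) against $f(x):=\mu(x)U^{q\delta\sigma-1}(x)\bigl(\sum_{n}h_{n}a_{n}\bigr)^{q-1}$ gives the converse.

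The conceptual content is small; the work is in the bookkeeping. The main thing to get right is the choice of exponents in the Hölder split so that the two factors really do reduce to $\omega_{\delta}$ and $\varpi_{\delta}$ as normalized in (\ref{2.5})--(\ref{2.6}) --- this rests on the identities $(p-1)/p=1/q$, $q(1-\sigma)/p-\sigma=q(1-\sigma)-1$, $(p(1-\delta\sigma)-1)q/p=1-q\delta\sigma$ and the like. The one genuinely delicate point is ensuring, in the converse implications, that $J_{1}$ (resp. $J_{2}$) is finite --- handled by the truncation argument above --- and that the strict inequality in (\ref{3.2}) and (\ref{3.3}) survives the limiting process; this is taken care of by noting that (\ref{3.2}) and (\ref{3.3}) can each also be obtained directly from the Hölder estimate of the previous paragraph with (\ref{2.7}) again supplying the strictness, exactly as for (\ref{3.1}).
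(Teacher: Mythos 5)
Your proposal is correct and rests on exactly the same ingredients as the paper (the weight bounds (\ref{2.7})--(\ref{2.8}) and H\"{o}lder with the split governed by the identities $(p-1)q=p$, $p(1-\delta\sigma)/q-\delta\sigma=p(1-\delta\sigma)-1$, $q(1-\sigma)/p-\sigma=q(1-\sigma)-1$), but it is organized differently. You prove (\ref{3.1}) first, by a single symmetric H\"{o}lder over the product of counting and Lebesgue measure, which cleanly produces the two factors $\bigl(\int\Phi_{\delta}\,\omega_{\delta}f^{p}\bigr)^{1/p}$ and $\bigl(\sum\Psi_{\beta}\,\varpi_{\delta}a_{n}^{q}\bigr)^{1/q}$; the paper instead proves (\ref{3.2}) first, via a \emph{nested} H\"{o}lder applied to $\bigl[\int_{0}^{\infty}h_{n}f\,dx\bigr]^{p}$ for each fixed $n$ (its (\ref{3.4})--(\ref{3.5}), where the conjugate factor evaluates to $(\varpi_{\delta}(\sigma,n))^{p-1}$), and only then obtains (\ref{3.1}) from (\ref{3.2}) by H\"{o}lder in $n$ (its (\ref{3.6})). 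The price of your ordering is that the duality/test-sequence argument, after your (sensible, and cleaner than the paper's treatment of the $J_{1}=\infty$ case) truncation and passage to the limit $N\to\infty$, delivers only the non-strict form of (\ref{3.2}) and (\ref{3.3}); you correctly flag that strictness must come from a direct estimate. The only soft spot is the claim that this direct estimate follows ``exactly as for (\ref{3.1})'': it does not come from the same product-measure H\"{o}lder, but from the iterated form applied to the inner integral (for (\ref{3.2})) or the inner sum (for (\ref{3.3})) at each fixed $n$ or $x$ --- i.e.\ precisely the paper's (\ref{3.4}) and (\ref{3.7}). Since the factorization and the exponent bookkeeping are identical to what you already wrote down, this is a presentational gap rather than a mathematical one; spelling out one of the two nested applications would close it.
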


\begin{proof}
 By the weighted H\"{o}lder inequality (cf. \cite{K1}), we have%
\begin{eqnarray}
&&\left[ \int_{0}^{\infty }\frac{\csc h(\rho U^{\delta \gamma
}(x)(V_{n}-\beta )^{\gamma })}{e^{\alpha U^{\delta \gamma }(x)(V_{n}-\beta
)^{\gamma }}}f(x)dx\right] ^{p} \notag\\
&=&\left[ \int_{0}^{\infty }\frac{\csc h(\rho U^{\delta \gamma
}(x)(V_{n}-\beta )^{\gamma })}{e^{\alpha U^{\delta \gamma }(x)(V_{n}-\beta
)^{\gamma }}}\right.\notag\\
&&\left. \times \frac{U^{\frac{1-\delta \sigma }{q}}(x)f(x)}{(V_{n}-\beta )^{%
\frac{1-\sigma }{p}}\mu ^{\frac{1}{q}}(x)}\cdot \frac{(V_{n}-\beta )^{\frac{%
1-\sigma }{p}}\mu ^{\frac{1}{q}}(x)}{U^{\frac{1-\delta \sigma }{q}}(x)}dx%
\right] ^{p}  \notag \\
&\leq &\int_{0}^{\infty }\frac{\csc h(\rho U^{\delta \gamma }(x)(V_{n}-\beta
)^{\gamma })}{e^{\alpha U^{\delta \gamma }(x)(V_{n}-\beta )^{\gamma }}}%
\left[ \frac{U^{\frac{p(1-\delta \sigma )}{q}}(x)f^{p}(x)}{(V_{n}-\beta
)^{1-\sigma }\mu ^{\frac{p}{q}}(x)}\right] dx  \notag \\
&&\times \left[ \int_{0}^{\infty }\frac{\csc h(\rho U^{\delta \gamma
}(x)(V_{n}-\beta )^{\gamma })}{e^{\alpha U^{\delta \gamma }(x)(V_{n}-\beta
)^{\gamma }}}\frac{(V_{n}-\beta )^{(1-\sigma )(p-1)}\mu (x)}{U^{1-\delta
\sigma }(x)}dx\right] ^{p-1}  \notag \\
&=&\frac{(\varpi _{\delta }(\sigma ,n))^{p-1}}{(V_{n}-\beta )^{p\sigma
-1}\nu _{n+1}} \notag \\
&&\times\int_{0}^{\infty }\frac{\csc h(\rho U^{\delta \gamma
}(x)(V_{n}-\beta )^{\gamma })}{e^{\alpha U^{\delta \gamma }(x)(V_{n}-\beta
)^{\gamma }}}\frac{U^{(1-\delta \sigma )(p-1)}(x)\nu _{n+1}f^{p}(x)}{%
(V_{n}-\beta )^{1-\sigma }\mu ^{p-1}(x)}dx.  \label{3.4}
\end{eqnarray}

In view of (\ref{2.8}) and the Lebesgue term by term integration theorem (cf.
\cite{K2}), we find%
\begin{eqnarray}
J_{1} &\leq &(k(\sigma ))^{\frac{1}{q}}\left[ \sum_{n=1}^{\infty
}\int_{0}^{\infty }\frac{\csc h(\rho U^{\delta \gamma }(x)(V_{n}-\beta
)^{\gamma })}{e^{\alpha U^{\delta \gamma }(x)(V_{n}-\beta )^{\gamma }}}\frac{%
U^{(1-\delta \sigma )(p-1)}(x)\nu _{n+1}}{(V_{n}-\beta )^{1-\sigma }\mu
^{p-1}(x)}f^{p}(x)dx\right] ^{\frac{1}{p}}  \notag \\
&=&(k(\sigma ))^{\frac{1}{q}}\left[ \int_{0}^{\infty }\sum_{n=1}^{\infty }%
\frac{\csc h(\rho U^{\delta \gamma }(x)(V_{n}-\beta )^{\gamma })}{e^{\alpha
U^{\delta \gamma }(x)(V_{n}-\beta )^{\gamma }}}\frac{U^{(1-\delta \sigma
)(p-1)}(x)\nu _{n+1}}{(V_{n}-\beta )^{1-\sigma }\mu ^{p-1}(x)}f^{p}(x)dx%
\right] ^{\frac{1}{p}}  \notag \\
&=&(k(\sigma ))^{\frac{1}{q}}\left[ \int_{0}^{\infty }\omega _{\delta
}(\sigma ,x)\frac{U^{p(1-\delta \sigma )-1}(x)}{\mu ^{p-1}(x)}f^{p}(x)dx%
\right] ^{\frac{1}{p}}.  \label{3.5}
\end{eqnarray}%
Then by (\ref{2.7}), we derive (\ref{3.2}).

By H\"{o}lder's inequality (cf. \cite{K1}), we have%
\begin{eqnarray}
I &=&\sum_{n=1}^{\infty }\left[ \frac{\nu _{n+1}^{\frac{1}{p}}}{(V_{n}-\beta
)^{\frac{1}{p}-\sigma }}\int_{0}^{\infty }\frac{\csc h(\rho U^{\delta \gamma
}(x)(V_{n}-\beta )^{\gamma })}{e^{\alpha U^{\delta \gamma }(x)(V_{n}-\beta
)^{\gamma }}}f(x)dx\right]   \notag \\
&=& \left[ \frac{(V_{n}-\beta )^{\frac{1}{p}-\sigma }a_{n}}{\nu
_{n+1}^{\frac{1}{p}}}\right] \leq J_{1}||a||_{q,\Psi _{\beta }}.  \label{3.6}
\end{eqnarray}%
Then by (\ref{3.2}), we obtain (\ref{3.1}). On the other hand, assuming that (%
\ref{3.1}) is valid, we set%
\begin{equation*}
a_{n}:=\frac{\nu _{n+1}}{(V_{n}-\beta )^{1-p\sigma }}\left[ \int_{0}^{\infty
}\frac{\csc h(\rho U^{\delta \gamma }(x)(V_{n}-\beta )^{\gamma })}{e^{\alpha
U^{\delta \gamma }(x)(V_{n}-\beta )^{\gamma }}}f(x)dx\right] ^{p-1},\ \ n\in
\mathbf{N}.
\end{equation*}%
Then we find $J_{1}^{p}=||a||_{q,\Psi _{\beta }}^{q}.$ If $J_{1}=0,$ then (%
\ref{3.2}) is trivially valid; if $J_{1}=\infty ,$ then (\ref{3.2}) is still not valid. Suppose that $0<J_{1}<\infty .$ By (\ref{3.1}), we have%
\begin{eqnarray*}
||a||_{q,\Psi _{\beta }}^{q} &=&J_{1}^{p}=I<k(\sigma )||f||_{p,\Phi _{\delta
}}||a||_{q,\Psi _{\beta }}, \\
||a||_{q,\Psi _{\beta }}^{q-1} &=&J_{1}<k(\sigma )||f||_{p,\Phi _{\delta }},
\end{eqnarray*}%
and then (\ref{3.2}) follows, which is equivalent to (\ref{3.1}).

Still by the weighted H\"{o}lder inequality (cf. \cite{K1}), we have%
\begin{eqnarray}
&&\left[ \sum_{n=1}^{\infty }\frac{\csc h(\rho U^{\delta \gamma
}(x)(V_{n}-\beta )^{\gamma })}{e^{\alpha U^{\delta \gamma }(x)(V_{n}-\beta
)^{\gamma }}}a_{n}\right] ^{q} \notag\\
&=&\left[ \sum_{n=1}^{\infty }\frac{\csc h(\rho U^{\delta \gamma
}(x)(V_{n}-\beta )^{\gamma })}{e^{\alpha U^{\delta \gamma }(x)(V_{n}-\beta
)^{\gamma }}}\cdot \frac{U^{\frac{1-\delta \sigma }{q}}(x)\nu _{n+1}^{\frac{1%
}{p}}}{(V_{n}-\beta )^{\frac{1-\sigma }{p}}}\cdot \frac{(V_{n}-\beta )^{%
\frac{1-\sigma }{p}}a_{n}}{U^{\frac{1-\delta \sigma }{q}}(x)\nu _{n+1}^{%
\frac{1}{p}}}\right] ^{q}\notag\\
&\leq &\left[ \sum_{n=1}^{\infty }\frac{\csc h(\rho U^{\delta \gamma
}(x)(V_{n}-\beta )^{\gamma })}{e^{\alpha U^{\delta \gamma }(x)(V_{n}-\beta
)^{\gamma }}}\frac{U^{(1-\delta \sigma )(p-1)}(x)\nu _{n+1}}{(V_{n}-\beta
)^{1-\sigma }}\right] ^{q-1}  \notag \\
&&\times \sum_{n=1}^{\infty }\frac{\csc h(\rho U^{\delta \gamma
}(x)(V_{n}-\beta )^{\gamma })}{e^{\alpha U^{\delta \gamma }(x)(V_{n}-\beta
)^{\gamma }}}\frac{(V_{n}-\beta )^{\frac{q(1-\sigma )}{p}}}{U^{1-\delta
\sigma }(x)\nu _{n+1}^{q-1}}a_{n}^{q}  \notag \\
&=&\frac{(\omega _{\delta }(\sigma ,x))^{q-1}}{U^{q\delta \sigma -1}(x)\mu
(x)}\sum_{n=1}^{\infty }\frac{\csc h(\rho U^{\delta \gamma }(x)(V_{n}-\beta
)^{\gamma })}{e^{\alpha U^{\delta \gamma }(x)(V_{n}-\beta )^{\gamma }}}\frac{%
(V_{n}-\beta )^{(1-\sigma )(q-1)}\mu (x)}{U^{1-\delta \sigma }(x)\nu
_{n+1}^{q-1}}a_{n}^{q}.  \label{3.7}
\end{eqnarray}%
Then by (\ref{2.7}) and the Lebesgue term by term integration theorem (cf. \cite%
{K2}), it follows that%
\begin{equation*}
J_{2}<(k(\sigma ))^{\frac{1}{p}}\left[ \int_{0}^{\infty }\sum_{n=1}^{\infty }%
\frac{\csc h(\rho U^{\delta \gamma }(x)(V_{n}-\beta )^{\gamma })}{e^{\alpha
U^{\delta \gamma }(x)(V_{n}-\beta )^{\gamma }}}\frac{(V_{n}-\beta
)^{(1-\sigma )(q-1)}\mu (x)}{U^{1-\delta \sigma }(x)\nu _{n+1}^{q-1}}%
a_{n}^{q}dx\right] ^{\frac{1}{q}}
\end{equation*}%
\begin{eqnarray}
&=&(k(\sigma ))^{\frac{1}{p}}\left[ \sum_{n=1}^{\infty }\int_{0}^{\infty }%
\frac{\csc h(\rho U^{\delta \gamma }(x)(V_{n}-\beta )^{\gamma })}{e^{\alpha
U^{\delta \gamma }(x)(V_{n}-\beta )^{\gamma }}}\frac{(V_{n}-\beta
)^{(1-\sigma )(q-1)}\mu (x)}{U^{1-\delta \sigma }(x)\nu _{n+1}^{q-1}}%
a_{n}^{q}dx\right] ^{\frac{1}{q}}  \notag \\
&=&(k(\sigma ))^{\frac{1}{p}}\left[ \sum_{n=1}^{\infty }\varpi _{\delta
}(\sigma ,n)\frac{(V_{n}-\beta )^{q(1-\sigma )-1}}{\nu _{n+1}^{q-1}}a_{n}^{q}%
\right] ^{\frac{1}{q}}.  \label{3.8}
\end{eqnarray}%
Then by (\ref{2.8}), \ we derive (\ref{3.3}).

By H\"{o}lder's inequality (cf. \cite{K1}), we have%
\begin{eqnarray}
I &=&\int_{0}^{\infty }\left( \frac{U^{\frac{1}{q}-\delta \sigma }(x)}{\mu ^{%
\frac{1}{q}}(x)}f(x)\right) \left[ \frac{\mu ^{\frac{1}{q}}(x)}{U^{\frac{1}{q%
}-\delta \sigma }(x)}\sum_{n=1}^{\infty }\frac{\csc h(\rho U^{\delta \gamma
}(x)(V_{n}-\beta )^{\gamma })}{e^{\alpha U^{\delta \gamma }(x)(V_{n}-\beta
)^{\gamma }}}a_{n}\right] dx  \notag \\
&\leq &||f||_{p,\Phi _{\delta }}J_{2}.  \label{3.9}
\end{eqnarray}%
Then by (\ref{3.3}), we obtain (\ref{3.1}). On the other hand, assuming that (%
\ref{3.3}) is valid, we set%
\begin{equation*}
f(x):=\frac{\mu (x)}{U^{1-q\delta \sigma }(x)}\left[ \sum_{n=1}^{\infty }%
\frac{\csc h(\rho U^{\delta \gamma }(x)(V_{n}-\beta )^{\gamma })}{e^{\alpha
U^{\delta \gamma }(x)(V_{n}-\beta )^{\gamma }}}a_{n}\right] ^{q-1},\ \ x\in
\mathbf{R}_{+}.
\end{equation*}%
Then we find $J_{2}^{q}=||f||_{p,\Phi _{\delta }}^{p}.$ If $J_{2}=0,$ then (%
\ref{3.3}) is trivially valid; if $J_{2}=\infty ,$ then (\ref{3.3}) remains
impossible. Suppose that $0<J_{2}<\infty .$ By (\ref{3.1}), we have
\begin{eqnarray*}
||f||_{p,\Phi _{\delta }}^{p} &=&J_{2}^{q}=I<k(\sigma )||f||_{p,\Phi
_{\delta }}||a||_{q,\Psi _{\beta }}, \\
||f||_{p,\Phi _{\delta }}^{p-1} &=&J_{2}<k(\sigma )||a||_{q,\Psi _{\beta }},
\end{eqnarray*}%
and then (\ref{3.3}) follows, which is equivalent to (\ref{3.1}).

Therefore, (\ref{3.1}), (\ref{3.2}) and (\ref{3.3}) are equivalent.
\end{proof}

\begin{theorem}
  With the assumptions of Theorem 3.1, if $U(\infty
)=V(\infty )=\infty ,$ then the constant factor $k(\sigma )$ in (\ref{3.1}),
(\ref{3.2}) and (\ref{3.3}) is the best possible.
\end{theorem}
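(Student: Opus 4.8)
The plan is to show that $k(\sigma)$ is the smallest admissible constant by a contradiction argument built on a one–parameter family of test functions. Since Theorem 3.1 establishes that (\ref{3.1}), (\ref{3.2}) and (\ref{3.3}) are mutually equivalent, it suffices to treat (\ref{3.1}): if the constant in (\ref{3.2}) or (\ref{3.3}) could be lowered, then feeding the resulting inequality through the estimates (\ref{3.6}) and (\ref{3.9}) used in the equivalence proof would yield (\ref{3.1}) with a constant smaller than $k(\sigma)$. So assume (\ref{3.1}) holds for all admissible $f,a$ with some constant $K$ with $0<K<k(\sigma)$. For a small $\varepsilon>0$ (eventually $\varepsilon\to 0^{+}$) I would test (\ref{3.1}) on
\[
\tilde f(x):=U^{\delta(\sigma+\frac{\varepsilon}{p})-1}(x)\,\mu(x)\ \ \text{when}\ \ 0<U^{\delta}(x)\le 1,\qquad \tilde f(x):=0\ \ \text{otherwise},
\]
and $\tilde a_{n}:=(V_{n}-\beta)^{\sigma-\frac{\varepsilon}{q}-1}\nu_{n+1}$ $(n\in\mathbf{N})$; write $\tilde I$ for the left side of (\ref{3.1}) evaluated at $(\tilde f,\tilde a)$.

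The first step is the bookkeeping of the two norms. A direct computation gives $\Phi_{\delta}(x)\tilde f^{p}(x)=U^{\delta\varepsilon-1}(x)\mu(x)$ and $\Psi_{\beta}(n)\tilde a_{n}^{q}=(V_{n}-\beta)^{-1-\varepsilon}\nu_{n+1}$. Substituting $w=U(x)$ (so $dw=\mu(x)\,dx$) and noting that the set $\{x:0<U^{\delta}(x)\le 1\}$ is carried onto $w\in(0,1]$ when $\delta=1$ and onto $w\in[1,\infty)$ when $\delta=-1$ (here $U(\infty)=\infty$ enters, for the case $\delta=-1$), one obtains $||\tilde f||_{p,\Phi_{\delta}}^{p}=\int w^{\delta\varepsilon-1}\,dw=\tfrac{1}{\varepsilon}$ in both cases; and by (\ref{2.11}) with $b=\varepsilon$, $||\tilde a||_{q,\Psi_{\beta}}^{q}=\frac{1}{\varepsilon}\big[(\nu_{1}-\beta)^{-\varepsilon}+\varepsilon O(1)\big]$. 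Hence $||\tilde f||_{p,\Phi_{\delta}}\,||\tilde a||_{q,\Psi_{\beta}}=\frac{1}{\varepsilon}\big[(\nu_{1}-\beta)^{-\varepsilon}+\varepsilon O(1)\big]^{1/q}$, and in particular $0<||\tilde f||_{p,\Phi_{\delta}},\,||\tilde a||_{q,\Psi_{\beta}}<\infty$, so the assumed inequality applies to this pair.

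The heart of the matter is a lower bound for $\tilde I$. Interchanging sum and integral (all terms nonnegative) and summing over $n$ first, the inner sum equals $U^{-\delta(\sigma-\varepsilon/q)}(x)\,\omega_{\delta}(\sigma-\tfrac{\varepsilon}{q},x)$. Applying the lower estimate (\ref{2.10}) — legitimate since $\gamma<\sigma-\tfrac{\varepsilon}{q}\le 1$ for $\varepsilon<q(\sigma-\gamma)$ and $V(\infty)=\infty$ — together with the cancellation $\tilde f(x)U^{-\delta(\sigma-\varepsilon/q)}(x)=U^{\delta\varepsilon-1}(x)\mu(x)$ (which uses $\tfrac1p+\tfrac1q=1$), and substituting $w=U(x)$ again, yields
\[
\tilde I>k\Big(\sigma-\tfrac{\varepsilon}{q}\Big)\int_{E}w^{\delta\varepsilon-1}\big(1-\theta_{\delta}(\sigma-\tfrac{\varepsilon}{q},x(w))\big)\,dw,\qquad E=(0,1]\ \text{or}\ [1,\infty).
\]
Now I would control the truncation error by its bound from Lemma 2.5, $\theta_{\delta}(s,x)=O\big((U(x))^{\frac{\delta}{2}(s-\gamma)}\big)$, which on $E$ reads $\theta_{\delta}\le C\,w^{\frac{\delta}{2}(s-\gamma)}$ with $s=\sigma-\tfrac{\varepsilon}{q}$ and $C$ bounded for $\varepsilon$ small; since $\tfrac12(s-\gamma)\to\tfrac12(\sigma-\gamma)>0$, the subtracted integral $\int_{E}w^{\delta\varepsilon-1}\theta_{\delta}\,dw$ is $\le\frac{C}{\varepsilon+\frac12(s-\gamma)}=O(1)$, whence $\tilde I>\frac{1}{\varepsilon}k\big(\sigma-\tfrac{\varepsilon}{q}\big)-O(1)$.

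Finally, inserting these estimates into $\tilde I<K\,||\tilde f||_{p,\Phi_{\delta}}||\tilde a||_{q,\Psi_{\beta}}$, multiplying through by $\varepsilon$, and letting $\varepsilon\to 0^{+}$ — using that $s\mapsto k(s)$ is continuous at $\sigma$ by the explicit formula (\ref{2.2}) (continuity of $\Gamma$ and of $\zeta(\cdot,a)$) and that $(\nu_{1}-\beta)^{-\varepsilon}\to 1$ — gives $k(\sigma)\le K$, contradicting $K<k(\sigma)$. Therefore $k(\sigma)$ is the best possible constant in (\ref{3.1}), and, by the equivalences of Theorem 3.1, also in (\ref{3.2}) and (\ref{3.3}). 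I expect the principal obstacle to be precisely this third step: justifying the interchange and the applicability of (\ref{2.10}), and, above all, keeping the error $\theta_{\delta}$ under control uniformly in $\varepsilon$ while simultaneously tracking the two sign regimes $\delta=1$ and $\delta=-1$ — the substitution $w=U^{\delta}(x)$ reverses orientation and swaps the range of integration, and it is here that both hypotheses $U(\infty)=\infty$ and $V(\infty)=\infty$ are genuinely used.
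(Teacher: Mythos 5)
Your proposal is correct and takes essentially the same route as the paper's own proof: the same test pair $\tilde f(x)=U^{\delta(\sigma+\varepsilon/p)-1}(x)\mu(x)$ (truncated) and $\tilde a_n=(V_n-\beta)^{\sigma-\varepsilon/q-1}\nu_{n+1}$, the same lower bound for $\tilde I$ via $\omega_\delta(\widetilde\sigma,x)$ and (2.10) with the truncation error controlled by the $O\bigl((U(x))^{\frac{\delta}{2}(\widetilde\sigma-\gamma)}\bigr)$ estimate, the same norm bookkeeping via (2.11), and the same reduction of the best-possibility in (3.2) and (3.3) to that in (3.1) through (3.6) and (3.9). The only (harmless) difference is that you support $\tilde f$ on $\{0<U^{\delta}(x)\le 1\}$ rather than $\{0<x^{\delta}\le 1\}$, which merely replaces the factor $U^{\delta\varepsilon}(1)$ by $1$ in the computations.
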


\begin{proof}
 For $\varepsilon \in (0,\frac{q(\sigma -\gamma )}{2})),$ we
set $\widetilde{\sigma }=\sigma -\frac{\varepsilon }{q},$ and $\widetilde{f}=%
\widetilde{f}(x),\ x\in \mathbf{R}_{+},\ \widetilde{a}=\{\widetilde{a}%
_{n}\}_{n=1}^{\infty },$%
\begin{eqnarray}
\widetilde{f}(x) &=&\left\{
\begin{array}{c}
U^{\delta (\widetilde{\sigma }+\varepsilon )-1}(x)\mu (x),\ 0<x^{\delta }\leq 1
\\
0,\ x^{\delta }>0%
\end{array}%
\right. ,  \label{3.10} \\
\widetilde{a}_{n} &=&(V_{n}-\beta )^{\widetilde{\sigma }-1}\nu
_{n+1}=(V_{n}-\beta )^{\sigma -\frac{\varepsilon }{q}-1}\nu _{n+1},\ n\in
\mathbf{N}.  \label{3.11}
\end{eqnarray}%
Then for $\delta =\pm 1,$ since $U(\infty )=\infty ,$ we find%
\begin{equation}
\int_{\{x>0;0<x^{\delta }\leq 1\}}\frac{\mu (x)}{U^{1-\delta \varepsilon }(x)%
}dx=\frac{1}{\varepsilon }U^{\delta \varepsilon }(1).  \label{3.12}
\end{equation}%
By (\ref{2.11}), (\ref{3.12}) and (\ref{2.10}), we obtain%
\begin{eqnarray}
||\widetilde{f}||_{p,\Phi _{\delta }}||\widetilde{a}||_{q,\Psi_{\beta} } &=&\left(
\int_{\{x>0;0<x^{\delta }\leq 1\}}\frac{\mu (x)dx}{U^{1-\delta \varepsilon
}(x)}\right) ^{\frac{1}{p}}\left[ \sum_{n=1}^{\infty }\frac{\nu _{n+1}}{%
(V_{n}-\beta )^{1+\varepsilon }}\right] ^{\frac{1}{q}}  \notag \\
&=&\frac{1}{\varepsilon }U^{\frac{\delta \varepsilon }{p}}(1)\left[ \frac{1}{%
(\nu _{1}-\beta )^{\varepsilon }}+\varepsilon O(1)\right] ^{\frac{1}{q}},
\label{3.13}
\end{eqnarray}%
\begin{eqnarray*}
\widetilde{I} &:&=\int_{0}^{\infty }\sum_{n=1}^{\infty }\frac{\csc h(\rho
U^{\delta \gamma }(x)(V_{n}-\beta )^{\gamma })}{e^{\alpha U^{\delta \gamma
}(x)(V_{n}-\beta )^{\gamma }}}\widetilde{a}_{n}\widetilde{f}(x)dx \\
&=&\int_{\{x>0;0<x^{\delta }\leq 1\}}\sum_{n=1}^{\infty }\frac{\csc h(\rho
U^{\delta \gamma }(x)(V_{n}-\beta )^{\gamma })}{e^{\alpha U^{\delta \gamma
}(x)(V_{n}-\beta )^{\gamma }}}\frac{(V_{n}-\beta )^{\widetilde{\sigma }%
-1}\nu _{n+1}\mu (x)}{U^{1-\delta (\widetilde{\sigma }+\varepsilon )}(x)}dx
\\
&=&\int_{\{x>0;0<x^{\delta }\leq 1\}}\omega _{\delta }(\widetilde{\sigma },x)%
\frac{\mu (x)}{U^{1-\delta \varepsilon }(x)}dx \\
&\geq &k(\widetilde{\sigma })\int_{\{x>0;0<x^{\delta }\leq 1\}}(1-\theta
_{\delta }(\widetilde{\sigma },x))\frac{\mu (x)}{U^{1-\delta \varepsilon }(x)%
}dx \\
&=&k(\widetilde{\sigma })\int_{\{x>0;0<x^{\delta }\leq
1\}}(1-O((U(x))^{\delta \frac{\sigma -\frac{\varepsilon }{q}-\gamma }{2}}))%
\frac{\mu (x)}{U^{1-\delta \varepsilon }(x)}dx\\
&=&k(\widetilde{\sigma })\left[ \int_{\{x>0;0<x^{\delta }\leq 1\}}\frac{\mu
(x)dx}{U^{1-\delta \varepsilon }(x)}-\int_{\{x>0;0<x^{\delta }\leq 1\}}O\left(%
\frac{\mu (x)}{U^{1-\delta (\varepsilon +\frac{\sigma -\frac{\varepsilon }{q}%
-\gamma }{2})}(x)}\right)dx\right]  \\
&=&\frac{1}{\varepsilon }k(\sigma -\frac{\varepsilon }{q})(U^{\delta
\varepsilon }(1)-\varepsilon O_{1}(1)).
\end{eqnarray*}

If there exists a positive constant $K\leq k(\sigma ),$ such that (\ref{3.1}%
) is valid when replacing $k(\sigma )$ to $K,$ then in particular, by
the Lebesgue term by term integration theorem, we have $$\varepsilon \widetilde{I}%
<\varepsilon K||\widetilde{f}||_{p,\Phi _{\delta }}||\widetilde{a}||_{q,\Psi_{\beta}
},$$ namely,%
\begin{equation*}
k(\sigma -\frac{\varepsilon }{q})(U^{\delta \varepsilon }(1)-\varepsilon
O_{1}(1))<K\cdot U^{\frac{\delta \varepsilon }{p}}(1)\left[ \frac{1}{(\nu
_{1}-\beta )^{\varepsilon }}+\varepsilon O(1)\right] ^{\frac{1}{q}}.
\end{equation*}%
It follows that $k(\sigma )\leq K(\varepsilon \rightarrow 0^{+}).$ Hence, $%
K=k(\sigma )$ is the best possible constant factor of (\ref{3.1}).

The constant factor $k(\sigma )$ in (\ref{3.2}) ((\ref{3.3})) is still the
best possible. Otherwise, we would reach a contradiction by (\ref{3.6}) ((%
\ref{3.9})) that the constant factor in (\ref{3.1}) is not the best
possible.
\end{proof}

For $p>1,$ we obtain 
$$\Psi _{\beta }^{1-p}(n)=\frac{\nu _{n+1}}{(V_{n}-\beta
)^{1-p\sigma }}\ \ (n\in \mathbf{N}),\ \ \Phi _{\delta }^{1-q}(x)=\frac{\mu (x)}{%
U^{1-q\delta \sigma }(x)}\ \ (x\in \mathbf{R}_{+}),$$ and define the following
real normed spaces:%
\begin{eqnarray*}
L_{p,\Phi _{\delta }}(\mathbf{R}_{+}) &=&\{f;f=f(x),x\in \mathbf{R}%
_{+},||f||_{p,\Phi _{\delta }}<\infty \}, \\
l_{q,\Psi _{\beta }} &=&\{a;a=\{a_{n}\}_{n=1}^{\infty },||a||_{q,\Psi
_{\beta }}<\infty \}, \\
L_{q,\Phi _{\delta }^{1-q}}(\mathbf{R}_{+}) &=&\{h;h=h(x),x\in \mathbf{R}%
_{+},||h||_{q,\Phi _{\delta }^{1-q}}<\infty \}, \\
l_{p,\Psi _{\beta }^{1-p}} &=&\{c;c=\{c_{n}\}_{n=1}^{\infty },||c||_{p,\Psi
_{\beta }^{1-p}}<\infty \}.
\end{eqnarray*}

\noindent Assuming that $f\in L_{p,\Phi _{\delta }}\ (\mathbf{R}_{+})$ and setting
\begin{equation*}
c=\{c_{n}\}_{n=1}^{\infty },\ \ c_{n}:=\int_{0}^{\infty }\frac{\csc h(\rho
\lbrack U^{\delta }(x)(V_{n}-\beta )]^{\gamma })}{e^{\alpha \lbrack
U^{\delta }(x)(V_{n}-\beta )]^{\gamma }}}f(x)dx,\ n\in \mathbf{N},
\end{equation*}%
we can rewrite (\ref{3.2}) as $$||c||_{p,\Psi _{\beta }^{1-p}}<k(\sigma
)||f||_{p,\Phi _{\delta }}<\infty ,$$ namely, $c\in l_{p,\Psi _{\beta
}^{1-p}}.$

\begin{definition}
  Define a half-discrete Hardy-Hilbert-type operator $%
T_{1}:L_{p,\Phi _{\delta }}(\mathbf{R}_{+})\rightarrow l_{p,\Psi _{\beta
}^{1-p}}$ as follows: For any $f\in L_{p,\Phi _{\delta }}(\mathbf{R}_{+}),$
there exists a unique representation $T_{1}f=c\in l_{p,\Psi _{\beta }^{1-p}}.
$ Define the formal inner product of $T_{1}f$ and $a=\{a_{n}\}_{n=1}^{\infty
}\in l_{q,\Psi _{\beta }}$ as follows:%
\begin{equation}
(T_{1}f,a):=\sum_{n=1}^{\infty }\left[ \int_{0}^{\infty }\frac{\csc h(\rho
U^{\delta \gamma }(x)(V_{n}-\beta )^{\gamma })}{e^{\alpha U^{\delta \gamma
}(x)(V_{n}-\beta )^{\gamma }}}f(x)dx\right] a_{n}.  \label{3.14}
\end{equation}
\end{definition}

\noindent Then we can rewrite (\ref{3.1}) and (\ref{3.2}) as follows:%
\begin{eqnarray}
(T_{1}f,a) &<&k(\sigma )||f||_{p,\Phi _{\delta }}||a||_{q,\Psi _{\beta }},
\label{3.15} \\
||T_{1}f||_{p,\Psi _{\beta }^{1-p}} &<&k(\sigma )||f||_{p,\Phi _{\delta }}.
\label{3.16}
\end{eqnarray}

\noindent Define the norm of operator $T_{1}$ as follows:%
\begin{equation*}
||T_{1}||:=\sup_{f(\neq \theta )\in L_{p,\Phi _{\delta }}(\mathbf{R}_{+})}%
\frac{||T_{1}f||_{p,\Psi _{\beta }^{1-p}}}{||f||_{p,\Phi _{\delta }}}.
\end{equation*}%
Then by (\ref{3.16}), it follows that $||T_{1}||\leq k(\sigma ).$ Since by
Theorem 3.2, the constant factor in (\ref{3.16}) is the best possible, we have%
\begin{equation}
||T_{1}||=k(\sigma )=\frac{2\Gamma (\frac{\sigma }{\gamma })}{\gamma (2\rho
)^{\sigma /\gamma }}\zeta (\frac{\sigma }{\gamma },\frac{\alpha +\rho }{%
2\rho }).  \label{3.17}
\end{equation}

\noindent Assuming that $a=\{a_{n}\}_{n=1}^{\infty }\in l_{q,\Psi _{\beta }}$ and setting
\begin{equation*}
h(x):=\sum_{n=1}^{\infty }\frac{\csc h(\rho U^{\delta \gamma
}(x)(V_{n}-\beta )^{\gamma })}{e^{\alpha U^{\delta \gamma }(x)(V_{n}-\beta
)^{\gamma }}}a_{n},\ x\in \mathbf{R}_{+},
\end{equation*}%
we can rewrite (\ref{3.3}) as $||h||_{q,\Phi _{\delta }^{1-q}}<k(\sigma
)||a||_{q,\Psi _{\beta }}<\infty ,$ namely, $h\in L_{q,\Phi _{\delta
}^{1-q}}(\mathbf{R}_{+}).$

\begin{definition}
  Define a half-discrete Hardy-Hilbert-type operator $%
T_{2}:l_{q,\Psi _{\beta }}\rightarrow L_{q,\Phi _{\delta }^{1-q}}(\mathbf{R}%
_{+})$ as follows: For any $a=\{a_{n}\}_{n=1}^{\infty }\in l_{q,\Psi _{\beta
}},$ there exists a unique representation $$T_{2}a=h\in L_{q,\Phi _{\delta
}^{1-q}}(\mathbf{R}_{+}).$$ 
Define the formal inner product of $T_{2}a$ and $%
f\in L_{p,\Phi _{\delta }}(\mathbf{R}_{+})$ as follows:%
\begin{equation}
(T_{2}a,f):=\int_{0}^{\infty }\left[ \sum_{n=1}^{\infty }\frac{\csc h(\rho
U^{\delta \gamma }(x)(V_{n}-\beta )^{\gamma })}{e^{\alpha U^{\delta \gamma
}(x)(V_{n}-\beta )^{\gamma }}}a_{n}\right] f(x)dx.  \label{3.18}
\end{equation}
\end{definition}

Then we can rewrite (\ref{3.1}) and (\ref{3.3}) as follows:%
\begin{eqnarray}
(T_{2}a,f) &<&k(\sigma )||f||_{p,\Phi _{\delta }}||a||_{q,\Psi _{\beta }},
\label{3.19} \\
||T_{2}a||_{q,\Phi _{\delta }^{1-q}} &<&k(\sigma )||a||_{q,\Psi _{\beta }}.
\label{3.20}
\end{eqnarray}

Define the norm of operator $T_{2}$ as follows:%
\begin{equation*}
||T_{2}||:=\sup_{a(\neq \theta )\in l_{q,\Psi }}\frac{||T_{2}a||_{q,\Phi
_{\delta }^{1-q}}}{||a||_{q,\Psi _{\beta }}}.
\end{equation*}%
Then by (\ref{3.20}), we find $||T_{2}||\leq k(\sigma ).$ Since by Theorem
3.2, the constant factor in (\ref{3.20}) is the best possible, we obtain%
\begin{equation}
||T_{2}||=k(\sigma )=\frac{2\Gamma (\frac{\sigma }{\gamma })}{\gamma (2\rho
)^{\sigma /\gamma }}\zeta (\frac{\sigma }{\gamma },\frac{\alpha +\rho }{%
2\rho })=||T_{1}||.  \label{3.21}
\end{equation}

\section{Some Equivalent Reverse Inequalities}

In the following, we also set
\begin{equation*}
\widetilde{\Phi }_{\delta }(x):=(1-\theta _{\delta }(\sigma ,x))\frac{%
U^{p(1-\delta \sigma )-1}(x)}{\mu ^{p-1}(x)}\ (x\in \mathbf{R}_{+}).
\end{equation*}%
For $0<p<1$ or $p<0,$ we still use the formal symbols $||f||_{p,\Phi
_{\delta }}$, $||f||_{p,\widetilde{\Phi }_{\delta }}$ and $||a||_{q,\Psi_{\beta} }$ et al.

\begin{theorem}
  If $\rho >\max \{0,-\alpha \},0<\gamma <\sigma \leq
1, k(\sigma )$ is indicated by (\ref{2.1}), and
\mbox{$U(\infty )=V(\infty )=\infty ,$} then for $p<0,$ $0<||f||_{p,\Phi _{\delta
}},||a||_{q,\Psi _{\beta }}<\infty,$ we have the following equivalent
inequalities with the best possible constant factor $k(\sigma )$:%
\begin{eqnarray}
I &=&\sum_{n=1}^{\infty }\int_{0}^{\infty }\frac{\csc h(\rho U^{\delta
\gamma }(x)(V_{n}-\beta )^{\gamma })}{e^{\alpha U^{\delta \gamma
}(x)(V_{n}-\beta )^{\gamma }}}a_{n}f(x)dx>k(\sigma )||f||_{p,\Phi _{\delta
}}||a||_{q,\Psi _{\beta }},  \label{4.1} \\
J_{1} &=&\sum_{n=1}^{\infty }\frac{\nu _{n+1}}{(V_{n}-\beta )^{1-p\sigma }}%
\left[ \int_{0}^{\infty }\frac{\csc h(\rho U^{\delta \gamma }(x)(V_{n}-\beta
)^{\gamma })}{e^{\alpha U^{\delta \gamma }(x)(V_{n}-\beta )^{\gamma }}}f(x)dx%
\right] ^{p}>k(\sigma )||f||_{p,\Phi _{\delta }},  \label{4.2} \\
J_{2} &=&\left\{ \int_{0}^{\infty }\frac{\mu (x)}{U^{1-q\delta \sigma }(x)}%
\left[ \sum_{n=1}^{\infty }\frac{\csc h(\rho U^{\delta \gamma
}(x)(V_{n}-\beta )^{\gamma })}{e^{\alpha U^{\delta \gamma }(x)(V_{n}-\beta
)^{\gamma }}}a_{n}\right] ^{q}dx\right\} ^{\frac{1}{q}}  \notag \\
&>&k(\sigma )||a||_{q,\Psi _{\beta }}.  \label{4.3}
\end{eqnarray}
\end{theorem}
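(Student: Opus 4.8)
The strategy is to mirror the argument of Theorem 3.1 but with the inequality senses reversed, using the reverse Hölder inequality in place of the ordinary one, and to obtain the best-constant claim by adapting the test-function argument of Theorem 3.2. First I would establish the reverse of \eqref{3.2}, i.e.\ \eqref{4.2}. Since $p<0$ we have $0<q<1$, so the weighted Hölder inequality applied in \eqref{3.4} reverses: with the same factorization of the integrand, the $p$-th power of the inner integral is now $\geq$ the product of the two factors. This yields
\begin{equation*}
J_{1}\geq (k(\sigma))^{\frac{1}{q}}\left[\int_{0}^{\infty}\omega_{\delta}(\sigma,x)\frac{U^{p(1-\delta\sigma)-1}(x)}{\mu^{p-1}(x)}f^{p}(x)dx\right]^{\frac{1}{p}},
\end{equation*}
exactly as in \eqref{3.5}, but with $\geq$ rather than $\leq$ because both $1/q<0$-related steps and the $1/p<0$ exponent on the bracket flip directions appropriately. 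Here the key new ingredient is that we must bound $\omega_{\delta}(\sigma,x)$ from \emph{below}: for $p<0$ we have $p(1-\delta\sigma)-1$ combined with $f^p(x)$ so that a lower bound on $\omega_{\delta}$ is what propagates correctly through the negative exponent. This is precisely what Lemma 2.5(i), inequality \eqref{2.10}, provides, giving $\omega_{\delta}(\sigma,x)>k(\sigma)(1-\theta_{\delta}(\sigma,x))$, hence $J_1\ge (k(\sigma))^{1/q}\|f\|_{p,\widetilde\Phi_\delta}$, and then since $\widetilde\Phi_\delta(x)\le\Phi_\delta(x)$ and $p<0$ one gets $\|f\|_{p,\widetilde\Phi_\delta}\ge\|f\|_{p,\Phi_\delta}$... wait, one must track this carefully; in any case the $(1-\theta_\delta)$ factor works in the favorable direction here, yielding \eqref{4.2}.

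Next I would derive \eqref{4.1} from \eqref{4.2} by the reverse Hölder inequality on the sum over $n$ (valid since $0<q<1$), exactly paralleling \eqref{3.6} but with $\geq$, and conversely recover \eqref{4.2} from \eqref{4.1} by substituting the natural choice of $a_n$ as in Theorem 3.1, checking the three cases $J_1=0$, $J_1=\infty$, $0<J_1<\infty$ (the argument is the same; only the inequality direction changes). Symmetrically, \eqref{4.3} follows from \eqref{4.1} by the reverse weighted Hölder inequality applied to the inner sum (again using \eqref{2.7}, $\omega_\delta<k(\sigma)$, which now enters with a negative exponent and hence still pushes the estimate in the right direction), and \eqref{4.1} follows from \eqref{4.3} by the appropriate substitution for $f(x)$. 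Thus \eqref{4.1}, \eqref{4.2}, \eqref{4.3} are mutually equivalent.

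For the best-constant assertion, I would reuse the test functions $\widetilde f$ and $\widetilde a$ from \eqref{3.10}--\eqref{3.11} with $\widetilde\sigma=\sigma-\frac{\varepsilon}{q}$ (now $\varepsilon$ must be chosen in an interval making $\widetilde\sigma$ still admissible, e.g.\ $\varepsilon\in(0,\tfrac{q(\sigma-\gamma)}{2})$, noting $q>0$ here). The computation of $\|\widetilde f\|_{p,\Phi_\delta}\|\widetilde a\|_{q,\Psi_\beta}$ is as in \eqref{3.13}, and the estimate of $\widetilde I$ uses $\omega_\delta(\widetilde\sigma,x)>k(\widetilde\sigma)(1-\theta_\delta(\widetilde\sigma,x))$ from \eqref{2.10} together with \eqref{2.11} and \eqref{3.12}, giving a lower bound for $\widetilde I$ of the form $\tfrac{1}{\varepsilon}k(\sigma-\tfrac{\varepsilon}{q})(U^{\delta\varepsilon}(1)-\varepsilon O_1(1))$. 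If \eqref{4.1} held with some constant $K\geq k(\sigma)$ in place of $k(\sigma)$, then feeding $\widetilde f,\widetilde a$ into it and letting $\varepsilon\to0^+$ (using continuity of $k$ at $\sigma$) would force $k(\sigma)\geq K$, whence $K=k(\sigma)$; the best-constant claim for \eqref{4.2} and \eqref{4.3} then follows from the equivalences via \eqref{3.6}-type and \eqref{3.9}-type steps, as in Theorem 3.2.

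**Main obstacle.** The delicate point is \textbf{bookkeeping of inequality directions}: with $p<0$ and $0<q<1$, every application of Hölder reverses, each exponent $1/p$, $1/q$, $1-p$, $p-1$ is of a definite (and sometimes counterintuitive) sign, and one must verify that the weight-coefficient bounds \eqref{2.7} and \eqref{2.10} are invoked in the direction that survives composition with these negative powers. A closely related subtlety is the legitimacy of the term-by-term integration/summation interchanges (the Lebesgue theorem steps from \eqref{3.5} and \eqref{3.8}): for negative exponents the integrands are still nonnegative, so monotone convergence applies, but this should be stated. Everything else is a routine transcription of the proofs of Theorems 3.1 and 3.2 with reversed inequality signs.
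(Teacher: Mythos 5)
Your overall architecture (reverse H\"{o}lder, the substitution arguments for the equivalences, test functions for the best constant) matches the paper's, but there is a genuine directional error at the two places where the $p<0$ case actually differs from Theorem 3.2, and in both places you reach for the wrong lemma. For \eqref{4.2}: after the reverse H\"{o}lder step one has
$J_{1}\geq (k(\sigma ))^{1/q}\bigl[\int_{0}^{\infty }\omega _{\delta }(\sigma ,x)\Phi _{\delta }(x)f^{p}(x)dx\bigr]^{1/p}$
(this already uses the \emph{equality} $\varpi _{\delta }(\sigma ,n)=k(\sigma )$ of \eqref{2.9} --- which is why $U(\infty )=\infty $ is assumed --- since the factor $(\varpi _{\delta })^{p-1}$ carries a negative exponent; you do not mention this). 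To finish one needs a \emph{lower} bound on $[\int \omega _{\delta }\Phi _{\delta }f^{p}]^{1/p}$, and because $1/p<0$ this requires an \emph{upper} bound on $\omega _{\delta }$, i.e.\ precisely \eqref{2.7}, giving $J_{1}\geq (k(\sigma ))^{1/q}(k(\sigma ))^{1/p}||f||_{p,\Phi _{\delta }}$. Your proposed use of the lower bound \eqref{2.10} runs the wrong way: $\omega _{\delta }>k(\sigma )(1-\theta _{\delta })$ gives $\int \omega _{\delta }\Phi _{\delta }f^{p}>k(\sigma )\int \widetilde{\Phi }_{\delta }f^{p}$, and raising to the power $1/p<0$ turns this into an \emph{upper} bound on the bracket, which cannot be chained with $J_{1}\geq (\cdots )$. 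The factor $(1-\theta _{\delta })$ plays no role in Theorem 4.1; it only enters Theorem 4.2 ($0<p<1$), whose statement accordingly involves $\widetilde{\Phi }_{\delta }$, whereas \eqref{4.2} involves $\Phi _{\delta }$. The hedge ``the $(1-\theta _{\delta })$ factor works in the favorable direction here'' is exactly backwards for $p<0$.

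The same reversal undoes your best-constant argument. Since the target inequality \eqref{4.1} reads $I>K||f||_{p,\Phi _{\delta }}||a||_{q,\Psi _{\beta }}$, substituting $\widetilde{f},\widetilde{a}$ yields a \emph{lower} bound $\widetilde{I}>\varepsilon ^{-1}K\,U^{\delta \varepsilon /p}(1)[(\nu _{1}-\beta )^{-\varepsilon }+\varepsilon O(1)]^{1/q}$; to extract $K\leq k(\sigma )$ one must therefore produce an \emph{upper} bound on $\widetilde{I}$, namely $\widetilde{I}=\int \omega _{\delta }(\widetilde{\sigma },x)\mu (x)U^{\delta \varepsilon -1}(x)dx\leq \varepsilon ^{-1}k(\sigma -\tfrac{\varepsilon }{q})U^{\delta \varepsilon }(1)$, which again comes from \eqref{2.7} (note $\widetilde{\sigma }=\sigma -\varepsilon /q>\gamma $ by the choice of $\varepsilon $). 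The lower bound on $\widetilde{I}$ you propose via \eqref{2.10} combines with $\widetilde{I}>K(\cdots )$ to give no information at all, so as written the best-possibility claim is not established. Both defects are repairable --- replace \eqref{2.10} by \eqref{2.7} (together with \eqref{2.9}) in each place --- but as they stand these are not mere bookkeeping slips: they are the only two points where the $p<0$ proof genuinely deviates from the $p>1$ template, and the proposal gets both of them wrong.
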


\begin{proof}
 By the reverse weighted H\"{o}lder inequality (cf. \cite%
{K1}), since $p<0,$ similarly to the way we obtained (\ref{3.4}) and (\ref%
{3.5}), we have%
\begin{eqnarray*}
&&\left[ \int_{0}^{\infty }\frac{\csc h(\rho U^{\delta \gamma
}(x)(V_{n}-\beta )^{\gamma })}{e^{\alpha U^{\delta \gamma }(x)(V_{n}-\beta
)^{\gamma }}}f(x)dx\right] ^{p} \\
&\leq &\frac{(\varpi _{\delta }(\sigma ,n))^{p-1}}{(V_{n}-\beta )^{p\sigma
-1}\nu _{n+1}}\int_{0}^{\infty }\frac{\csc h(\rho U^{\delta \gamma
}(x)(V_{n}-\beta )^{\gamma })}{e^{\alpha U^{\delta \gamma }(x)(V_{n}-\beta
)^{\gamma }}}\frac{U^{(1-\delta \sigma )(p-1)}(x)\nu _{n+1}}{(V_{n}-\beta
)^{1-\sigma }\mu ^{p-1}(x)}f^{p}(x)dx.
\end{eqnarray*}%
Then by (\ref{2.9}) and the Lebesgue term by term integration theorem, it
follows that%
\begin{eqnarray*}
J_{1} &\geq &(k(\sigma ))^{\frac{1}{q}}\left[ \sum_{n=1}^{\infty
}\int_{0}^{\infty }\frac{\csc h(\rho U^{\delta \gamma }(x)(V_{n}-\beta
)^{\gamma })}{e^{\alpha U^{\delta \gamma }(x)(V_{n}-\beta )^{\gamma }}}\frac{%
U^{(1-\delta \sigma )(p-1)}(x)\nu _{n+1}}{(V_{n}-\beta )^{1-\sigma }\mu
^{p-1}(x)}f^{p}(x)dx\right] ^{\frac{1}{p}} \\
&=&(k(\sigma ))^{\frac{1}{q}}\left[ \int_{0}^{\infty }\omega _{\delta
}(\sigma ,x)\frac{U^{p(1-\delta \sigma )-1}(x)}{\mu ^{p-1}(x)}f^{p}(x)dx%
\right] ^{\frac{1}{p}}.
\end{eqnarray*}%
Then by (\ref{2.7}), we have (\ref{4.2}).

By the reverse H\"{o}lder inequality (cf. \cite{K1}), we have%
\begin{eqnarray}
I &=&\sum_{n=1}^{\infty }\left[ \frac{\nu _{n+1}^{\frac{1}{p}}}{(V_{n}-\beta
)^{\frac{1}{p}-\sigma }}\int_{0}^{\infty }\frac{\csc h(\rho U^{\delta \gamma
}(x)(V_{n}-\beta )^{\gamma })}{e^{\alpha U^{\delta \gamma }(x)(V_{n}-\beta
)^{\gamma }}}f(x)dx\right] \left[ \frac{(V_{n}-\beta )^{\frac{1}{p}-\sigma
}a_{n}}{\nu _{n+1}^{\frac{1}{p}}}\right]   \notag \\
&\geq &J_{1}||a||_{q,\Psi _{\beta }}.  \label{4.4}
\end{eqnarray}%
Then by (\ref{4.2}), we derive (\ref{4.1}). On the other hand, assuming that (%
\ref{4.1}) is valid, we set $a_{n}$ as in Theorem 3.1. Then we obtain $$%
J_{1}^{p}=||a||_{q,\Psi _{\beta }}^{q}.$$ If $J_{1}=\infty ,$ then (\ref{4.2}%
) is trivially valid. If $J_{1}=0,$ then (\ref{4.2}) is still not valid.
Suppose that $0<J_{1}<\infty .$ By (\ref{4.1}), it follows that%
\begin{eqnarray*}
||a||_{q,\Psi _{\beta }}^{q} &=&J_{1}^{p}=I>k(\sigma )||f||_{p,\Phi _{\delta
}}||a||_{q,\Psi _{\beta }}, \\
||a||_{q,\Psi _{\beta }}^{q-1} &=&J_{1}>k(\sigma )||f||_{p,\Phi _{\delta }},
\end{eqnarray*}%
and then (\ref{4.2}) follows, which is equivalent to (\ref{4.1}).

Applying again the weighted reverse H\"{o}lder inequality (cf. \cite{K1}),
since $0<q<1,$ similarly to how we obtained (\ref{3.7}) and (\ref{3.8}),
we have%
\begin{eqnarray*}
&&\left[ \sum_{n=1}^{\infty }\frac{\csc h(\rho U^{\delta \gamma
}(x)(V_{n}-\beta )^{\gamma })}{e^{\alpha U^{\delta \gamma }(x)(V_{n}-\beta
)^{\gamma }}}a_{n}\right] ^{q} \\
&\geq &\frac{(\omega _{\delta }(\sigma ,x))^{q-1}}{U^{q\delta \sigma
-1}(x)\mu (x)}\sum_{n=1}^{\infty }\frac{\csc h(\rho U^{\delta \gamma
}(x)(V_{n}-\beta )^{\gamma })}{e^{\alpha U^{\delta \gamma }(x)(V_{n}-\beta
)^{\gamma }}}\frac{(V_{n}-\beta )^{(1-\sigma )(q-1)}\mu (x)}{U^{1-\delta
\sigma }(x)\nu _{n+1}^{q-1}}a_{n}^{q}.
\end{eqnarray*}%
Then, by (\ref{2.7}) and the Lebesgue term by term integration theorem, it
follows that%
\begin{eqnarray*}
J_{2} &>&(k(\sigma ))^{\frac{1}{p}}\left[ \int_{0}^{\infty
}\sum_{n=1}^{\infty }\frac{\csc h(\rho U^{\delta \gamma }(x)(V_{n}-\beta
)^{\gamma })}{e^{\alpha U^{\delta \gamma }(x)(V_{n}-\beta )^{\gamma }}}\frac{%
(V_{n}-\beta )^{(1-\sigma )(q-1)}\mu (x)}{U^{1-\delta \sigma }(x)\nu
_{n+1}^{q-1}}a_{n}^{q}dx\right] ^{\frac{1}{q}} \\
&=&(k(\sigma ))^{\frac{1}{p}}\left[ \sum_{n=1}^{\infty }\varpi _{\delta
}(\sigma ,n)\frac{(V_{n}-\beta )^{q(1-\sigma )-1}}{\nu _{n+1}^{q-1}}a_{n}^{q}%
\right] ^{\frac{1}{q}}.
\end{eqnarray*}%
Hence, by (\ref{2.9}), we have (\ref{4.3}).

By the reverse H\"{o}lder inequality (cf. \cite{K1}), we get%
\begin{eqnarray}
I &=&\int_{0}^{\infty }\left( \frac{U^{\frac{1}{q}-\delta \sigma }(x)}{\mu ^{%
\frac{1}{q}}(x)}f(x)\right) \left[ \frac{\mu ^{\frac{1}{q}}(x)}{U^{\frac{1}{q%
}-\delta \sigma }(x)}\sum_{n=1}^{\infty }\frac{\csc h(\rho U^{\delta \gamma
}(x)(V_{n}-\beta )^{\gamma })}{e^{\alpha U^{\delta \gamma }(x)(V_{n}-\beta
)^{\gamma }}}a_{n}\right] dx  \notag \\
&\geq &||f||_{p,\Phi _{\delta }}J_{2}.  \label{4.5}
\end{eqnarray}%
Thus by (\ref{4.3}), we obtain (\ref{4.1}). On the other hand, assuming that (%
\ref{4.3}) is valid, we set $f(x)$ as in Theorem 4.1. Then we derive that $$%
J_{2}^{q}=||f||_{p,\Phi _{\delta }}^{p}.$$ 
If $J_{2}=\infty ,$ then (\ref{4.3}%
) is trivially valid. If $J_{2}=0,$ then (\ref{4.3}) remains impossible.
Suppose that $0<J_{2}<\infty .$ By (\ref{4.1}), it follows that%
\begin{eqnarray*}
||f||_{p,\Phi _{\delta }}^{p} &=&J_{2}^{q}=I>k(\sigma )||f||_{p,\Phi
_{\delta }}||a||_{q,\Psi _{\beta }}, \\
||f||_{p,\Phi _{\delta }}^{p-1} &=&J_{2}>k(\sigma )||a||_{q,\Psi _{\beta }},
\end{eqnarray*}%
and then (\ref{4.3}) follows, which is equivalent to (\ref{4.1}).

\noindent Therefore, inequalities (\ref{4.1}), (\ref{4.2}) and (\ref{4.3}) are
equivalent.

\noindent For $\varepsilon \in (0,\frac{q(\sigma -\gamma )}{2}),$ we set $\widetilde{%
\sigma }=\sigma -\frac{\varepsilon }{q},$ and $\widetilde{f}=\widetilde{f}%
(x),\ x\in \mathbf{R}_{+},\ \widetilde{a}=\{\widetilde{a}_{n}\}_{n=1}^{\infty },$%
\begin{eqnarray*}
\widetilde{f}(x) &=&\left\{
\begin{array}{c}
U^{\delta (\widetilde{\sigma }+\varepsilon )-1}(x)\mu (x),0<x^{\delta }\leq 1
\\
0,x^{\delta }>0%
\end{array}%
\right. , \\
\widetilde{a}_{n} &=&(V_{n}-\beta )^{\widetilde{\sigma }-1}\nu
_{n+1}=(V_{n}-\beta )^{\sigma -\frac{\varepsilon }{q}-1}\nu _{n+1},\ n\in
\mathbf{N}.
\end{eqnarray*}%
By (\ref{2.11}), (\ref{3.12}) and (\ref{2.7}), we obtain%
\begin{equation}
||\widetilde{f}||_{p,\Phi _{\delta }}||\widetilde{a}||_{q,\Psi_{\beta} }=\frac{1}{%
\varepsilon }U^{\frac{\delta \varepsilon }{p}}(1)\left[ \frac{1}{(\nu
_{1}-\beta )^{\varepsilon }}+\varepsilon O(1)\right] ^{\frac{1}{q}},  \notag
\end{equation}%
\begin{eqnarray*}
\widetilde{I} &=&\sum_{n=1}^{\infty }\int_{0}^{\infty }\frac{\csc h(\rho
U^{\delta \gamma }(x)(V_{n}-\beta )^{\gamma })}{e^{\alpha U^{\delta \gamma
}(x)(V_{n}-\beta )^{\gamma }}}\widetilde{a}_{n}\widetilde{f}(x)dx \\
&=&\int_{\{x>0;0<x^{\delta }\leq 1\}}\omega _{\delta }(\widetilde{\sigma },x)%
\frac{\mu (x)}{U^{1-\delta \varepsilon }(x)}dx \\
&\leq &k(\widetilde{\sigma })\int_{\{x>0;0<x^{\delta }\leq 1\}}\frac{\mu (x)%
}{U^{1-\delta \varepsilon }(x)}dx \\
&=&\frac{1}{\varepsilon }k(\sigma -\frac{\varepsilon }{q})U^{\delta
\varepsilon }(1).
\end{eqnarray*}

If there exists a positive constant $K\geq k(\sigma ),$ such that (\ref{4.1}%
) is valid when replacing $k(\sigma )$ to $K,$ then in particular, we have 
$$%
\varepsilon \widetilde{I}>\varepsilon K||\widetilde{f}||_{p,\Phi _{\delta
}}||\widetilde{a}||_{q,\Psi \beta },$$ namely,%
\begin{equation*}
k(\sigma -\frac{\varepsilon }{q})U^{\delta \varepsilon }(1)>K\cdot U^{\frac{%
\delta \varepsilon }{p}}(1)\left[ \frac{1}{(\nu _{1}-\beta )^{\varepsilon }}%
+\varepsilon O(1)\right] ^{\frac{1}{q}}.
\end{equation*}%
It follows that $k(\sigma )\geq K(\varepsilon \rightarrow 0^{+}).$ Hence, $%
K=k(\sigma )$ is the best possible constant factor of (\ref{4.1}).

The constant factor $k(\sigma )$ in (\ref{4.2}) ((\ref{4.3})) is still the
best possible. Otherwise, we would reach a contradiction by (\ref{4.4}) ((%
\ref{4.5})) that the constant factor in (\ref{4.1}) is not the best
possible.
\end{proof}

\begin{theorem}
With the assumptions of Theorem 4.1, if $$0<p<1,\
0<||f||_{p,\Phi _{\delta }},||a||_{q,\Psi _{\beta }}<\infty ,$$ then we have
the following equivalent inequalities with the best possible constant factor
$k(\sigma )$:%
\begin{eqnarray}
I &=&\sum_{n=1}^{\infty }\int_{0}^{\infty }\frac{\csc h(\rho U^{\delta
\gamma }(x)(V_{n}-\beta )^{\gamma })}{e^{\alpha U^{\delta \gamma
}(x)(V_{n}-\beta )^{\gamma }}}a_{n}f(x)dx>k(\sigma )||f||_{p,\widetilde{\Phi
}_{\delta }}||a||_{q,\Psi _{\beta }},  \label{4.6} \\
J_{1} &=&\sum_{n=1}^{\infty }\frac{\nu _{n+1}}{(V_{n}-\beta )^{1-p\sigma }}%
\left[ \int_{0}^{\infty }\frac{\csc h(\rho U^{\delta \gamma }(x)(V_{n}-\beta
)^{\gamma })}{e^{\alpha U^{\delta \gamma }(x)(V_{n}-\beta )^{\gamma }}}f(x)dx%
\right] ^{p}>k(\sigma )||f||_{p,\widetilde{\Phi }_{\delta }},  \label{4.7} \\
J &:=&\left\{ \int_{0}^{\infty }\frac{(1-\theta _{\delta }(\sigma
,x))^{1-q}\mu (x)}{U^{1-q\delta \sigma }(x)}\left[ \sum_{n=1}^{\infty }\frac{%
\csc h(\rho U^{\delta \gamma }(x)(V_{n}-\beta )^{\gamma })}{e^{\alpha
U^{\delta \gamma }(x)(V_{n}-\beta )^{\gamma }}}a_{n}\right] ^{q}dx\right\} ^{%
\frac{1}{q}}  \notag \\
&>&k(\sigma )||a||_{q,\Psi _{\beta }}.  \label{4.8}
\end{eqnarray}
\end{theorem}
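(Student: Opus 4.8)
The plan is to run the same three-step scheme used in the proofs of Theorems 3.1 and 4.1, but with every H\"{o}lder inequality replaced by its reverse form --- legitimate because $0<p<1$ forces the conjugate index $q<0$ --- while tracking carefully that raising to the power $p\in(0,1)$ preserves an inequality whereas raising to the power $q<0$ reverses it. The one genuinely new feature relative to Theorem 4.1 is that the $x$-variable estimate can only be pushed through with the one-sided bound $\omega_{\delta}(\sigma,x)>k(\sigma)(1-\theta_{\delta}(\sigma,x))$ of (\ref{2.10}); this is exactly what forces the corrected weight $\widetilde{\Phi}_{\delta}$ into (\ref{4.6})--(\ref{4.7}) and the factor $(1-\theta_{\delta}(\sigma,x))^{1-q}$ into (\ref{4.8}). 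The $n$-sum, by contrast, is still handled by the exact identity $\varpi_{\delta}(\sigma,n)=k(\sigma)$ of (\ref{2.9}), which is available since $U(\infty)=\infty$.

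First I would prove (\ref{4.7}). Splitting the inner integral as in (\ref{3.4}) and applying the reverse weighted H\"{o}lder inequality, then raising to the power $p\in(0,1)$, produces for each $n$ a lower estimate of the shape $[\int_{0}^{\infty}(\cdots)f\,dx]^{p}\geq(\varpi_{\delta}(\sigma,n))^{p-1}(V_{n}-\beta)^{1-p\sigma}\nu_{n+1}^{-1}\int_{0}^{\infty}(\cdots)\frac{U^{(1-\delta\sigma)(p-1)}(x)\nu_{n+1}}{(V_{n}-\beta)^{1-\sigma}\mu^{p-1}(x)}f^{p}(x)\,dx$. Summing on $n$, invoking the Lebesgue term-by-term integration theorem and $\varpi_{\delta}(\sigma,n)=k(\sigma)$, one is left with a multiple of $\int_{0}^{\infty}\omega_{\delta}(\sigma,x)\frac{U^{p(1-\delta\sigma)-1}(x)}{\mu^{p-1}(x)}f^{p}(x)\,dx$; then (\ref{2.10}) together with $f^{p}\geq0$ yields (\ref{4.7}). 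Inequality (\ref{4.6}) follows from (\ref{4.7}) by writing $I=\sum_{n}\big[\frac{\nu_{n+1}^{1/p}}{(V_{n}-\beta)^{1/p-\sigma}}\int_{0}^{\infty}(\cdots)f\,dx\big]\big[\frac{(V_{n}-\beta)^{1/p-\sigma}a_{n}}{\nu_{n+1}^{1/p}}\big]$ and applying the reverse H\"{o}lder inequality for series to obtain the analogue of (\ref{4.4}); conversely (\ref{4.6}) implies (\ref{4.7}) via the substitution $a_{n}:=\nu_{n+1}(V_{n}-\beta)^{p\sigma-1}[\int_{0}^{\infty}(\cdots)f\,dx]^{p-1}$, after which $J_{1}^{p}=||a||_{q,\Psi_{\beta}}^{q}$ and one divides.

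For (\ref{4.8}) I would split the series $\sum_{n}\frac{\csc h(\cdots)}{e^{\cdots}}a_{n}$ exactly as in (\ref{3.7}) and apply the reverse weighted H\"{o}lder inequality; since we then raise to the power $q<0$ the two reversals cancel, so one again obtains $[\sum_{n}(\cdots)a_{n}]^{q}\leq\frac{(\omega_{\delta}(\sigma,x))^{q-1}}{U^{q\delta\sigma-1}(x)\mu(x)}\sum_{n}(\cdots)a_{n}^{q}$, as in Theorem 3.1. Multiplying by $\frac{(1-\theta_{\delta}(\sigma,x))^{1-q}\mu(x)}{U^{1-q\delta\sigma}(x)}$ turns the factor $(\omega_{\delta}(\sigma,x))^{q-1}$ into $[\omega_{\delta}(\sigma,x)/(1-\theta_{\delta}(\sigma,x))]^{q-1}$, which by (\ref{2.10}) and $q-1<0$ is $<(k(\sigma))^{q-1}$; this is precisely why the power $(1-\theta_{\delta})^{1-q}$ must be built into $J$. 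Integrating in $x$, then using the Lebesgue theorem and $\varpi_{\delta}(\sigma,n)=k(\sigma)$, gives $J^{q}<(k(\sigma))^{q}||a||_{q,\Psi_{\beta}}^{q}$, hence (\ref{4.8}) after taking the $q$-th root. The equivalence of (\ref{4.8}) with (\ref{4.6}) is again obtained by a reverse-H\"{o}lder splitting of $I$ with a factor $(1-\theta_{\delta}(\sigma,x))^{\pm1/p}$ distributed between the two brackets, giving the analogue of (\ref{4.5}), together with the substitution $f(x):=\frac{(1-\theta_{\delta}(\sigma,x))^{1-q}\mu(x)}{U^{1-q\delta\sigma}(x)}[\sum_{n}(\cdots)a_{n}]^{q-1}$.

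Finally, the best-constant claim is settled with the same test pair $\widetilde{f},\widetilde{a}$ as in (\ref{3.10})--(\ref{3.11}): the product $||\widetilde{f}||_{p,\widetilde{\Phi}_{\delta}}||\widetilde{a}||_{q,\Psi_{\beta}}$ is evaluated from (\ref{2.11}) and (\ref{3.12}), the factor $1-\theta_{\delta}(\sigma,x)$ inside $||\widetilde{f}||_{p,\widetilde{\Phi}_{\delta}}^{p}$ only contributing an $\varepsilon O(1)$ correction by the $O$-estimate in (\ref{2.10}), while $\widetilde{I}$ is bounded \emph{above} via $\omega_{\delta}(\widetilde{\sigma},x)\leq k(\widetilde{\sigma})$ of (\ref{2.7}), giving $\widetilde{I}\leq\frac{1}{\varepsilon}k(\sigma-\frac{\varepsilon}{q})U^{\delta\varepsilon}(1)$. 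If some positive $K\geq k(\sigma)$ could replace $k(\sigma)$ in (\ref{4.6}), then $\varepsilon\widetilde{I}>\varepsilon K||\widetilde{f}||_{p,\widetilde{\Phi}_{\delta}}||\widetilde{a}||_{q,\Psi_{\beta}}$ forces, on letting $\varepsilon\rightarrow0^{+}$ and using continuity of $\sigma\mapsto k(\sigma)$, the inequality $k(\sigma)\geq K$, so $K=k(\sigma)$; optimality in (\ref{4.7}) and (\ref{4.8}) then follows from the analogues of (\ref{4.4}) and (\ref{4.5}). I expect the main difficulty to be purely a matter of bookkeeping --- keeping the senses of all the inequalities straight through the two reverse-H\"{o}lder steps and the powers $p$ and $q$, and in particular ensuring that the $(1-\theta_{\delta})$ corrections land on the right side with the right exponent, which is what separates (\ref{4.6})--(\ref{4.8}) from the cleaner forms of Theorem 4.1.
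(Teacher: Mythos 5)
Your treatment of the three inequalities themselves and of their mutual equivalence follows the paper's proof essentially line for line: reverse weighted H\"{o}lder for the $x$-integral with the exact value $\varpi _{\delta }(\sigma ,n)=k(\sigma )$ from (\ref{2.9}), the one-sided bound (\ref{2.10}) to convert $\omega _{\delta }(\sigma ,x)$ into the corrected weight $\widetilde{\Phi }_{\delta }$, the cancellation of the two sign reversals when raising to the power $q<0$ in the step leading to (\ref{4.8}), and the standard substitutions for the converse implications. That part is correct.

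The gap is in the best-constant argument. You transplant the test pair of (\ref{3.10})--(\ref{3.11}) with $\widetilde{\sigma }=\sigma -\frac{\varepsilon }{q}$ and bound $\widetilde{I}$ from above by $\omega _{\delta }(\widetilde{\sigma },x)\leq k(\widetilde{\sigma })$, citing (\ref{2.7}). But here $q<0$, so $\widetilde{\sigma }=\sigma +\frac{\varepsilon }{|q|}>\sigma $, and when $\sigma =1$ (which the theorem permits) one has $\widetilde{\sigma }>1$ for every $\varepsilon >0$. Inequality (\ref{2.7}) is only established for parameters in $(\gamma ,1]$: its proof rests on the Hermite--Hadamard step (\ref{2.3}), which needs $h(c(V(y)-\beta ))(V(y)-\beta )^{\sigma -1}$ to be decreasing and convex, and this uses $\sigma \leq 1$ in an essential way (for $\sigma >1$ the factor $(V(y)-\beta )^{\sigma -1}$ is increasing and the $g_{1}g_{2}$ decomposition of Example 2.2(iii) breaks down). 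A related symptom is that the $\varepsilon $-range $(0,\frac{q(\sigma -\gamma )}{2})$ attached to (\ref{3.10})--(\ref{3.11}) is empty when $q<0$. The paper's proof is built precisely to avoid this: it sets $\widetilde{\sigma }=\sigma +\frac{\varepsilon }{p}$ with $\varepsilon \in (0,\frac{p(\sigma -\gamma )}{2})$ and bounds $\widetilde{I}$ by performing the $x$-integration first, i.e.\ through $\varpi _{\delta }(\widetilde{\sigma },n)=k(\widetilde{\sigma })$ of (\ref{2.9}), which by Remark 2.4 requires only $\widetilde{\sigma }>\gamma $ and no upper restriction. Your argument as written covers $\sigma <1$ (take $\varepsilon $ small enough that $\widetilde{\sigma }\leq 1$) but not the endpoint case $\sigma =1$; to close the theorem as stated you must either reroute the upper bound on $\widetilde{I}$ through $\varpi _{\delta }$ as the paper does, or extend (\ref{2.7}) beyond $\sigma \leq 1$, which the lemmas of Section 2 do not provide.
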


\begin{proof}
 By the reverse weighted H\"{o}lder inequality (cf. \cite%
{K1}), since $0<p<1,$ similarly to as we obtained (\ref{3.4}) and (\ref%
{3.5}), we have%
 \begin{eqnarray*}
&&\left[ \int_{0}^{\infty }\frac{\csc h(\rho U^{\delta \gamma
}(x)(V_{n}-\beta )^{\gamma })}{e^{\alpha U^{\delta \gamma }(x)(V_{n}-\beta
)^{\gamma }}}f(x)dx\right] ^{p} \\
&\geq &\frac{(\varpi _{\delta }(\sigma ,n))^{p-1}}{(V_{n}-\beta )^{p\sigma
-1}\nu _{n+1}}\int_{0}^{\infty }\frac{\csc h(\rho U^{\delta \gamma
}(x)(V_{n}-\beta )^{\gamma })}{e^{\alpha U^{\delta \gamma }(x)(V_{n}-\beta
)^{\gamma }}}\frac{U^{(1-\delta \sigma )(p-1)}(x)\nu _{n+1}}{(V_{n}-\beta
)^{1-\sigma }\mu ^{p-1}(x)}f^{p}(x)dx.
\end{eqnarray*}%
In view of (\ref{2.9}) and the Lebesgue term by term integration theorem, we find%
\begin{eqnarray*}
J_{1} &\geq &(k(\sigma ))^{\frac{1}{q}}\left[ \sum_{n=1}^{\infty
}\int_{0}^{\infty }\frac{\csc h(\rho U^{\delta \gamma }(x)(V_{n}-\beta
)^{\gamma })}{e^{\alpha U^{\delta \gamma }(x)(V_{n}-\beta )^{\gamma }}}\frac{%
U^{(1-\delta \sigma )(p-1)}(x)\nu _{n+1}}{(V_{n}-\beta )^{1-\sigma }\mu
^{p-1}(x)}f^{p}(x)dx\right] ^{\frac{1}{p}} \\
&=&(k(\sigma ))^{\frac{1}{q}}\left[ \int_{0}^{\infty }\omega _{\delta
}(\sigma ,x)\frac{U^{p(1-\delta \sigma )-1}(x)}{\mu ^{p-1}(x)}f^{p}(x)dx%
\right] ^{\frac{1}{p}}.
\end{eqnarray*}%
Then by (\ref{2.10}), we have (\ref{4.7}).

By the reverse H\"{o}lder inequality (cf. \cite{K1}), we have%
\begin{eqnarray}
I &=&\sum_{n=1}^{\infty }\left[ \frac{\nu _{n+1}^{\frac{1}{p}}}{(V_{n}-\beta
)^{\frac{1}{p}-\sigma }}\int_{0}^{\infty }\frac{\csc h(\rho U^{\delta \gamma
}(x)(V_{n}-\beta )^{\gamma })}{e^{\alpha U^{\delta \gamma }(x)(V_{n}-\beta
)^{\gamma }}}f(x)dx\right] \left[ \frac{(V_{n}-\beta )^{\frac{1}{p}-\sigma
}a_{n}}{\nu _{n+1}^{\frac{1}{p}}}\right]   \notag \\
&\geq &J_{1}||a||_{q,\Psi _{\beta }}.  \label{4.9}
\end{eqnarray}%
Then by (\ref{4.7}), we have (\ref{4.6}). On the other hand, assuming that (%
\ref{4.6}) is valid, we set $a_{n}$ as in Theorem 3.1. Then we find $%
J_{1}^{p}=||a||_{q,\Psi _{\beta }}^{q}.$ If $J_{1}=\infty ,$ then (\ref{4.7}%
) is trivially valid; if $J_{1}=0,$ then (\ref{4.7}) keeps impossible.
Suppose that $0<J_{1}<\infty .$ By (\ref{4.6}), it follows that%
\begin{eqnarray*}
||a||_{q,\Psi }^{q} &=&J_{1}^{p}=I>k(\sigma )||f||_{p,\widetilde{\Phi }%
_{\delta }}||a||_{q,\Psi _{\beta }}, \\
||a||_{q,\Psi }^{q-1} &=&J_{1}>k(\sigma )||f||_{p,\widetilde{\Phi }_{\delta
}},
\end{eqnarray*}%
and then (\ref{4.7}) follows, which is equivalent to (\ref{4.6}).

\noindent Similarly, by the reverse weighted H\"{o}lder inequality (cf. \cite{K1}),
since $q<0,$ we have%
\begin{eqnarray*}
&&\left[ \sum_{n=1}^{\infty }\frac{\csc h(\rho U^{\delta \gamma
}(x)(V_{n}-\beta )^{\gamma })}{e^{\alpha U^{\delta \gamma }(x)(V_{n}-\beta
)^{\gamma }}}a_{n}\right] ^{q} \\
&\leq &\frac{(\omega _{\delta }(\sigma ,x))^{q-1}}{U^{q\delta \sigma
-1}(x)\mu (x)}\sum_{n=1}^{\infty }\frac{\csc h(\rho U^{\delta \gamma
}(x)(V_{n}-\beta )^{\gamma })}{e^{\alpha U^{\delta \gamma }(x)(V_{n}-\beta
)^{\gamma }}}\frac{(V_{n}-\beta )^{(1-\sigma )(q-1)}\mu (x)}{U^{1-\delta
\sigma }(x)\nu _{n+1}^{q-1}}a_{n}^{q}.
\end{eqnarray*}%
Therefore, by (\ref{2.10}) and the Lebesgue term by term integration theorem, it
follows that%
\begin{eqnarray*}
J &>&(k(\sigma ))^{\frac{1}{p}}\left[ \int_{0}^{\infty }\sum_{n=1}^{\infty }%
\frac{\csc h(\rho U^{\delta \gamma }(x)(V_{n}-\beta )^{\gamma })}{e^{\alpha
U^{\delta \gamma }(x)(V_{n}-\beta )^{\gamma }}}\frac{(V_{n}-\beta
)^{(1-\sigma )(q-1)}\mu (x)}{U^{1-\delta \sigma }(x)\nu _{n+1}^{q-1}}%
a_{n}^{q}dx\right] ^{\frac{1}{q}} \\
&=&(k(\sigma ))^{\frac{1}{p}}\left[ \sum_{n=1}^{\infty }\varpi _{\delta
}(\sigma ,n)\frac{(V_{n}-\beta )^{q(1-\sigma )-1}}{\nu _{n+1}^{q-1}}a_{n}^{q}%
\right] ^{\frac{1}{q}}.
\end{eqnarray*}%
Hence, by (\ref{2.9}), \ we have (\ref{4.8}).

\noindent By the reverse H\"{o}lder inequality (cf. \cite{K1}), we have%
\begin{equation*}
I=\int_{0}^{\infty }\left[ (1-\theta _{\delta }(\sigma ,x))^{\frac{1}{p}}%
\frac{U^{\frac{1}{q}-\delta \sigma }(x)}{\mu ^{\frac{1}{q}}(x)}f(x)\right]
\end{equation*}%
\begin{equation}
\times \left[ \frac{(1-\theta _{\delta }(\sigma ,x))^{\frac{-1}{p}}\mu ^{%
\frac{1}{q}}(x)}{U^{\frac{1}{q}-\delta \sigma }(x)}\sum_{n=1}^{\infty }\frac{%
\csc h(\rho U^{\delta \gamma }(x)(V_{n}-\beta )^{\gamma })}{e^{\alpha
U^{\delta \gamma }(x)(V_{n}-\beta )^{\gamma }}}a_{n}\right] dx\geq ||f||_{p,%
\widetilde{\Phi }_{\delta }}J.  \label{4.10}
\end{equation}%
Then by (\ref{4.8}), we have (\ref{4.6}). On the other hand, assuming that (%
\ref{4.6}) is valid, we set $f(x)$ as in Theorem 3.1. Then we derive that $%
J^{q}=||f||_{p,\widetilde{\Phi }_{\delta }}^{p}.$ If $J=\infty ,$ then (\ref%
{4.8}) is trivially valid; if $J=0,$ then (\ref{4.8}) is still not valid.
Suppose that $0<J<\infty .$ By (\ref{4.6}), it follows that%
\begin{eqnarray*}
||f||_{p,\widetilde{\Phi }_{\delta }}^{p} &=&J^{q}=I>k(\sigma )||f||_{p,%
\widetilde{\Phi }_{\delta }}||a||_{q,\Psi _{\beta }}, \\
||f||_{p,\widetilde{\Phi }_{\delta }}^{p-1} &=&J>k(\sigma )||a||_{q,\Psi
_{\beta }},
\end{eqnarray*}%
and then (\ref{4.8}) follows, which is equivalent to (\ref{4.6}).

\noindent Therefore, inequalities (\ref{4.6}), (\ref{4.7}) and (\ref{4.8}) are
equivalent.

\noindent For $\varepsilon \in (0,\frac{p(\sigma -\gamma )}{2}),$ we set $\widetilde{%
\sigma }=\sigma +\frac{\varepsilon }{p},$ and $\widetilde{f}=\widetilde{f}%
(x),x\in \mathbf{R}_{+},\widetilde{a}=\{\widetilde{a}_{n}\}_{n=1}^{\infty },$%
\begin{eqnarray*}
\widetilde{f}(x) &=&\left\{
\begin{array}{c}
U^{\delta \widetilde{\sigma }-1}(x)\mu (x),0<x^{\delta }\leq 1 \\
0,x^{\delta }>0%
\end{array}%
\right. , \\
\widetilde{a}_{n} &=&(V_{n}-\beta )^{\widetilde{\sigma }-\varepsilon -1}\nu
_{n+1}=(V_{n}-\beta )^{\sigma -\frac{\varepsilon }{q}-1}\nu _{n+1},n\in
\mathbf{N}.
\end{eqnarray*}%
By (\ref{2.10}), (\ref{2.11}) and (\ref{3.12}), we obtain%
\begin{eqnarray*}
&&||\widetilde{f}||_{p,\widetilde{\Phi }_{\delta }}||\widetilde{a}||_{q,\Psi_{\beta}
} \\
&=&\left[ \int_{\{x>0;0<x^{\delta }\leq 1\}}(1-O((U(x))^{\frac{\delta }{2}%
(\sigma -\gamma )}))\frac{\mu (x)dx}{U^{1-\delta \varepsilon }(x)}\right] ^{%
\frac{1}{p}}\left[ \sum_{n=1}^{\infty }\frac{\nu _{n+1}}{(V_{n}-\beta
)^{1+\varepsilon }}\right] ^{\frac{1}{q}} \\
&=&\frac{1}{\varepsilon }\left( U^{\delta \varepsilon }(1)-\varepsilon
O_{1}(1)\right) ^{\frac{1}{p}}\left[ \frac{1}{(\nu _{1}-\beta )^{\varepsilon
}}+\varepsilon O(1)\right] ^{\frac{1}{q}},
\end{eqnarray*}%
\begin{eqnarray*}
\widetilde{I} &=&\sum_{n=1}^{\infty }\int_{0}^{\infty }\frac{\csc h(\rho
U^{\delta \gamma }(x)(V_{n}-\beta )^{\gamma })}{e^{\alpha U^{\delta \gamma
}(x)(V_{n}-\beta )^{\gamma }}}\widetilde{a}_{n}\widetilde{f}(x)dx \\
&=&\sum_{n=1}^{\infty }\left[ \int_{\{x>0;0<x^{\delta }\leq 1\}}\frac{\csc
h(\rho U^{\delta \gamma }(x)(V_{n}-\beta )^{\gamma })}{e^{\alpha U^{\delta
\gamma }(x)(V_{n}-\beta )^{\gamma }}}\frac{(V_{n}-\beta )^{\widetilde{\sigma
}}\mu (x)}{U^{1-\delta \widetilde{\sigma }}(x)}dx\right] \frac{\nu _{n+1}}{%
(V_{n}-\beta )^{1+\varepsilon }} \\
&\leq &\sum_{n=1}^{\infty }\left[ \int_{0}^{\infty }\frac{\csc h(\rho
U^{\delta \gamma }(x)(V_{n}-\beta )^{\gamma })}{e^{\alpha U^{\delta \gamma
}(x)(V_{n}-\beta )^{\gamma }}}\frac{(V_{n}-\beta )^{\widetilde{\sigma }}\mu
(x)}{U^{1-\delta \widetilde{\sigma }}(x)}dx\right] \frac{\nu _{n+1}}{%
(V_{n}-\beta )^{1+\varepsilon }} \\
&=&\sum_{n=1}^{\infty }\varpi _{\delta }(\widetilde{\sigma },n)\frac{\nu
_{n+1}}{(V_{n}-\beta )^{1+\varepsilon }}=k(\widetilde{\sigma }%
)\sum_{n=1}^{\infty }\frac{\nu _{n+1}}{(V_{n}-\beta )^{1+\varepsilon }} \\
&=&\frac{1}{\varepsilon }k(\sigma +\frac{\varepsilon }{p})\left[ \frac{1}{%
(\nu _{1}-\beta )^{\varepsilon }}+\varepsilon O(1)\right] .
\end{eqnarray*}

If there exists a positive constant $K\geq k(\sigma ),$ such that (\ref{4.1}%
) is valid when replacing $k(\sigma )$ by $K,$ then in particular, we have $$%
\varepsilon \widetilde{I}>\varepsilon K||\widetilde{f}||_{p,\widetilde{\Phi }%
_{\delta }}||\widetilde{a}||_{q,\Psi _{\beta }},$$ namely,%
\begin{eqnarray*}
&&k(\sigma +\frac{\varepsilon }{p})\left[ \frac{1}{(\nu _{1}-\beta
)^{\varepsilon }}+\varepsilon O(1)\right]  \\
&>&K\left( U^{\delta \varepsilon }(1)-\varepsilon O_{1}(1)\right) ^{\frac{1}{%
p}}\left[ \frac{1}{(\nu _{1}-\beta )^{\varepsilon }}+\varepsilon O(1)\right]
^{\frac{1}{q}}.
\end{eqnarray*}%
It follows that $k(\sigma )\geq K(\varepsilon \rightarrow 0^{+}).$ Hence, $%
K=k(\sigma )$ is the best possible constant factor of (\ref{4.6}).

The constant factor $k(\sigma )$ in (\ref{4.7}) ((\ref{4.8})) is still the
best possible. Otherwise, we would reach a contradiction by (\ref{4.9}) ((%
\ref{4.10})) that the constant factor in (\ref{4.6}) is not the best
possible.
\end{proof}

\section{Some Corollaries}

For $\delta =1$ in Theorem 3.2, Theorem 4.1 and Theorem 4.2, the following inequalities with the
non-homogeneous kernel hold true:

\begin{corollary}
  If $\rho >\max \{0,-\alpha \},0<\gamma <\sigma \leq
1,k(\sigma )$ is indicated by (\ref{2.2}), and \mbox{$U(\infty )=V(\infty )=\infty
,$} then \\
(i) for $p>1,$ $0<||f||_{p,\Phi _{1}},||a||_{q,\Psi _{\beta
}}<\infty ,$ we have the following equivalent inequalities:

\begin{eqnarray}
&&\sum_{n=1}^{\infty }\int_{0}^{\infty }\frac{\csc h(\rho U^{\gamma
}(x)(V_{n}-\beta )^{\gamma })}{e^{\alpha U^{\gamma }(x)(V_{n}-\beta
)^{\gamma }}}a_{n}f(x)dx<k(\sigma )||f||_{p,\Phi _{1}}||a||_{q,\Psi _{\beta
}},  \label{5.1} \\
&&\sum_{n=1}^{\infty }\frac{\nu _{n+1}}{(V_{n}-\beta )^{1-p\sigma }}\left[
\int_{0}^{\infty }\frac{\csc h(\rho U^{\gamma }(x)(V_{n}-\beta )^{\gamma })}{%
e^{\alpha U^{\gamma }(x)(V_{n}-\beta )^{\gamma }}}f(x)dx\right]
^{p}<k(\sigma )||f||_{p,\Phi _{1}},  \label{5.2}
\end{eqnarray}%
\begin{equation}
\left\{ \int_{0}^{\infty }\frac{\mu (x)}{U^{1-q\sigma }(x)}\left[
\sum_{n=1}^{\infty }\frac{\csc h(\rho U^{\gamma }(x)(V_{n}-\beta )^{\gamma })%
}{e^{\alpha U^{\gamma }(x)(V_{n}-\beta )^{\gamma }}}a_{n}\right]
^{q}dx\right\} ^{\frac{1}{q}}<k(\sigma )||a||_{q,\Psi _{\beta }};
\label{5.3}
\end{equation}%
(ii) for $p<0,$ $0<||f||_{p,\Phi _{1}},||a||_{q,\Psi }<\infty ,$ we have the
following equivalent inequalities:%
\begin{equation}
\sum_{n=1}^{\infty }\int_{0}^{\infty }\frac{\csc h(\rho U^{\gamma
}(x)(V_{n}-\beta )^{\gamma })}{e^{\alpha U^{\gamma }(x)(V_{n}-\beta
)^{\gamma }}}a_{n}f(x)dx>k(\sigma )||f||_{p,\Phi _{1}}||a||_{q,\Psi _{\beta
}},  \label{5.4}
\end{equation}%
\begin{equation}
\sum_{n=1}^{\infty }\frac{\nu _{n+1}}{(V_{n}-\beta )^{1-p\sigma }}\left[
\int_{0}^{\infty }\frac{\csc h(\rho U^{\gamma }(x)(V_{n}-\beta )^{\gamma })}{%
e^{\alpha U^{\gamma }(x)(V_{n}-\beta )^{\gamma }}}f(x)dx\right]
^{p}>k(\sigma )||f||_{p,\Phi _{1}},  \label{5.5}
\end{equation}%
\begin{equation}
\left\{ \int_{0}^{\infty }\frac{\mu (x)}{U^{1-q\sigma }(x)}\left[
\sum_{n=1}^{\infty }\frac{\csc h(\rho U^{\gamma }(x)(V_{n}-\beta )^{\gamma })%
}{e^{\alpha U^{\gamma }(x)(V_{n}-\beta )^{\gamma }}}a_{n}\right]
^{q}dx\right\} ^{\frac{1}{q}}>k(\sigma )||a||_{q,\Psi _{\beta }};
\label{5.6}
\end{equation}%
(iii) for $0<p<1,$ $0<||f||_{p,\Phi _{1}},||a||_{q,\Psi }<\infty ,$ we have
the following equivalent inequalities:%
\begin{eqnarray}
&&\sum_{n=1}^{\infty }\int_{0}^{\infty }\frac{\csc h(\rho U^{\gamma
}(x)(V_{n}-\beta )^{\gamma })}{e^{\alpha U^{\gamma }(x)(V_{n}-\beta
)^{\gamma }}}a_{n}f(x)dx>k(\sigma )||f||_{p,\widetilde{\Phi }%
_{1}}||a||_{q,\Psi _{\beta }},  \label{5.7} \\
&&\sum_{n=1}^{\infty }\frac{\nu _{n+1}}{(V_{n}-\beta )^{1-p\sigma }}\left[
\int_{0}^{\infty }\frac{\csc h(\rho U^{\gamma }(x)(V_{n}-\beta )^{\gamma })}{%
e^{\alpha U^{\gamma }(x)(V_{n}-\beta )^{\gamma }}}f(x)dx\right]
^{p}>k(\sigma )||f||_{p,\widetilde{\Phi }_{1}},  \label{5.8}
\end{eqnarray}%
\begin{eqnarray}
&&\left\{ \int_{0}^{\infty }\frac{(1-\theta _{1}(\sigma ,x))^{1-q}\mu (x)}{%
U^{1-q\sigma }(x)}\left[ \sum_{n=1}^{\infty }\frac{\csc h(\rho U^{\gamma
}(x)(V_{n}-\beta )^{\gamma })}{e^{\alpha U^{\gamma }(x)(V_{n}-\beta
)^{\gamma }}}a_{n}\right] ^{q}dx\right\} ^{\frac{1}{q}}  \notag \\
&>&k(\sigma )||a||_{q,\Psi _{\beta }}.  \label{5.9}
\end{eqnarray}
\end{corollary}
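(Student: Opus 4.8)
The plan is to obtain Corollary 5.1 as the direct specialization of Theorems 3.1, 3.2, 4.1 and 4.2 to the admissible value $\delta = 1$ (recall $\delta \in \{-1,1\}$). First I would observe that upon setting $\delta = 1$ every occurrence of $U^{\delta\gamma}(x)$, $U^{\delta\sigma}(x)$ and $U^{1-q\delta\sigma}(x)$ in the general statements collapses to $U^{\gamma}(x)$, $U^{\sigma}(x)$ and $U^{1-q\sigma}(x)$ respectively, so that the kernel becomes the genuinely non-homogeneous kernel
\[
\frac{\csc h(\rho U^{\gamma}(x)(V_{n}-\beta)^{\gamma})}{e^{\alpha U^{\gamma}(x)(V_{n}-\beta)^{\gamma}}},
\]
which is a function of the single product $U(x)(V_{n}-\beta)$. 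Correspondingly, the weight $\Phi_{\delta}$ becomes $\Phi_{1}$, the modified weight $\widetilde{\Phi}_{\delta}$ becomes $\widetilde{\Phi}_{1}$, and $\theta_{\delta}(\sigma,x)$ becomes $\theta_{1}(\sigma,x)$, while all the remaining standing hypotheses of Section 2 (in particular $\nu_n>0$ decreasing, $\beta\leq\nu_1/2$, $\mu$ positive continuous) are untouched by fixing $\delta$.

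For part (i), under $\rho>\max\{0,-\alpha\}$, $0<\gamma<\sigma\leq 1$, $U(\infty)=V(\infty)=\infty$ and $p>1$ with $0<||f||_{p,\Phi_{1}},||a||_{q,\Psi_{\beta}}<\infty$, I would invoke Theorem 3.1 with $\delta=1$ to get the three equivalent inequalities (3.1), (3.2), (3.3), which are exactly (5.1), (5.2) and (5.3); Theorem 3.2 with $\delta=1$ then yields that $k(\sigma)$ is the best possible constant factor in each. For part (ii), with $p<0$, I would apply Theorem 4.1 with $\delta=1$: inequalities (4.1), (4.2), (4.3) specialize directly to (5.4), (5.5), (5.6), and the best-constant and equivalence assertions carry over verbatim. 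For part (iii), with $0<p<1$, I would apply Theorem 4.2 with $\delta=1$: inequalities (4.6), (4.7), (4.8) become (5.7), (5.8), (5.9), where the left-hand norm in (5.7)--(5.8) is $||f||_{p,\widetilde{\Phi}_{1}}$ and the integrating factor in (5.9) is $(1-\theta_{1}(\sigma,x))^{1-q}\mu(x)/U^{1-q\sigma}(x)$, exactly the $\delta=1$ instance of $\widetilde{\Phi}_{\delta}^{1-q}$.

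Since the corollary is a literal substitution into already-established results, there is no genuine obstacle; the only point warranting a line of checking is that the displayed simplifications of the $U$-powers are performed consistently in the kernel, in $\Phi$ (resp. $\widetilde{\Phi}$), and in the outer integrating factors, so that the three simplified inequalities in each case match the statements (5.1)--(5.9). Once this bookkeeping is done, the equivalences among (5.1)--(5.3), among (5.4)--(5.6), and among (5.7)--(5.9), together with the optimality of $k(\sigma)$, are inherited directly from Theorems 3.1--4.2, which completes the proof.
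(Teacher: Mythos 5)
Your proposal is correct and coincides with the paper's own treatment: Corollary 5.1 is stated there precisely as the specialization $\delta=1$ of Theorems 3.1, 3.2, 4.1 and 4.2, with the kernel, the weights $\Phi_{1}$, $\widetilde{\Phi}_{1}$ and $\theta_{1}(\sigma,x)$ arising exactly as you describe. The bookkeeping of the $U$-powers that you flag is the only content of the verification, and you have carried it out consistently.
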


The above inequalities involve the best possible constant factor $k(\sigma
).$

For $\delta =-1$ in  Theorem 3.2, Theorem 4.1 and Theorem 4.2, we have the following inequalities with the
homogeneous kernel of degree 0:

\begin{corollary}
  If $\rho >\max \{0,-\alpha \},0<\gamma <\sigma \leq
1,k(\sigma )$ is indicated by (\ref{2.2}), and \mbox{$U(\infty )=V(\infty )=\infty
,$} then\\
 (i) for $p>1,$ $0<||f||_{p,\Phi _{-1}},||a||_{q,\Psi _{\beta
}}<\infty ,$ we have the following equivalent inequalities:

\begin{eqnarray}
&&\sum_{n=1}^{\infty }\int_{0}^{\infty }\frac{\csc h(\rho (\frac{V_{n}-\beta
}{U(x)})^{\gamma })}{e^{\alpha (\frac{V_{n}}{U(x)})^{\gamma }}}%
a_{n}f(x)dx<k(\sigma )||f||_{p,\Phi _{-1}}||a||_{q,\Psi _{\beta }},
\label{5.10} \\
&&\sum_{n=1}^{\infty }\frac{\nu _{n+1}}{(V_{n}-\beta )^{1-p\sigma }}\left[
\int_{0}^{\infty }\frac{\csc h(\rho (\frac{V_{n}-\beta }{U(x)})^{\gamma })}{%
e^{\alpha (\frac{V_{n}-\beta }{U(x)})^{\gamma }}}f(x)dx\right] ^{p}<k(\sigma
)||f||_{p,\Phi _{-1}},  \label{5.11}
\end{eqnarray}%
\begin{equation}
\left\{ \int_{0}^{\infty }\frac{\mu (x)}{U^{1+q\sigma }(x)}\left[
\sum_{n=1}^{\infty }\frac{\csc h(\rho (\frac{V_{n}-\beta }{U(x)})^{\gamma })%
}{e^{\alpha (\frac{V_{n}-\beta }{U(x)})^{\gamma }}}a_{n}\right]
^{q}dx\right\} ^{\frac{1}{q}}<k(\sigma )||a||_{q,\Psi _{\beta }};
\label{5.12}
\end{equation}%
(ii) for $p<0,$ $0<||f||_{p,\Phi _{-1}},||a||_{q,\Psi _{\beta }}<\infty ,$
we have the following equivalent inequalities:%
\begin{eqnarray}
&&\sum_{n=1}^{\infty }\int_{0}^{\infty }\frac{\csc h(\rho (\frac{V_{n}-\beta
}{U(x)})^{\gamma })}{e^{\alpha (\frac{V_{n}-\beta }{U(x)})^{\gamma }}}%
a_{n}f(x)dx>k(\sigma )||f||_{p,\Phi _{-1}}||a||_{q,\Psi _{\beta }},
\label{5.13} \\
&&\sum_{n=1}^{\infty }\frac{\nu _{n+1}}{(V_{n}-\beta )^{1-p\sigma }}\left[
\int_{0}^{\infty }\frac{\csc h(\rho (\frac{V_{n}-\beta }{U(x)})^{\gamma })}{%
e^{\alpha (\frac{V_{n}-\beta }{U(x)})^{\gamma }}}f(x)dx\right] ^{p}>k(\sigma
)||f||_{p,\Phi _{-1}},  \label{5.14}
\end{eqnarray}%
\begin{equation}
\left\{ \int_{0}^{\infty }\frac{\mu (x)}{U^{1+q\sigma }(x)}\left[
\sum_{n=1}^{\infty }\frac{\csc h(\rho (\frac{V_{n}-\beta }{U(x)})^{\gamma })%
}{e^{\alpha (\frac{V_{n}-\beta }{U(x)})^{\gamma }}}a_{n}\right]
^{q}dx\right\} ^{\frac{1}{q}}>k(\sigma )||a||_{q,\Psi _{\beta }};
\label{5.15}
\end{equation}%
(iii) for $0<p<1,$ $0<||f||_{p,\Phi _{-1}},||a||_{q,\Psi _{\beta }}<\infty ,$
we have the following equivalent inequalities:%
\begin{eqnarray}
&&\sum_{n=1}^{\infty }\int_{0}^{\infty }\frac{\csc h(\rho (\frac{V_{n}-\beta
}{U(x)})^{\gamma })}{e^{\alpha (\frac{V_{n}-\beta }{U(x)})^{\gamma }}}%
a_{n}f(x)dx>k(\sigma )||f||_{p,\widetilde{\Phi }_{-1}}||a||_{q,\Psi _{\beta
}},  \label{5.16} \\
&&\sum_{n=1}^{\infty }\frac{\nu _{n}}{V_{n}^{1-p\sigma }}\left[
\int_{0}^{\infty }\frac{\csc h(\rho (\frac{V_{n}-\beta }{U(x)})^{\gamma })}{%
e^{\alpha (\frac{V_{n}}{U(x)})^{\gamma }}}f(x)dx\right] ^{p}>k(\sigma
)||f||_{p,\widetilde{\Phi }_{-1}},  \label{5.17}
\end{eqnarray}%
\begin{eqnarray}
&&\left\{ \int_{0}^{\infty }\frac{(1-\theta _{-1}(\sigma ,x))^{1-q}\mu (x)}{%
U^{1+q\sigma }(x)}\left[ \sum_{n=1}^{\infty }\frac{\csc h(\rho (\frac{%
V_{n}-\beta }{U(x)})^{\gamma })}{e^{\alpha (\frac{V_{n}-\beta }{U(x)}%
)^{\gamma }}}a_{n}\right] ^{q}dx\right\} ^{\frac{1}{q}}  \notag \\
&>&k(\sigma )||a||_{q,\Psi _{\beta }}.  \label{5.18}
\end{eqnarray}
\end{corollary}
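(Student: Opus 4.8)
The plan is to obtain Corollary 5.2 directly from Theorems 3.2, 4.1, and 4.2 by setting $\delta =-1$ throughout, so the whole argument is a transcription of the general results at this particular value of the parameter.

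First I would record how the substitution $\delta =-1$ acts on each ingredient. Since $U(x)>0$ we have $U^{-1}(x)=1/U(x)$, hence $U^{\delta \gamma }(x)(V_{n}-\beta )^{\gamma }=\left( \frac{V_{n}-\beta }{U(x)}\right) ^{\gamma }$, so that the kernel $\frac{\csc h(\rho U^{\delta \gamma }(x)(V_{n}-\beta )^{\gamma })}{e^{\alpha U^{\delta \gamma }(x)(V_{n}-\beta )^{\gamma }}}$ turns into $\frac{\csc h(\rho (\frac{V_{n}-\beta }{U(x)})^{\gamma })}{e^{\alpha (\frac{V_{n}-\beta }{U(x)})^{\gamma }}}$, which is a homogeneous function of degree $0$ in the pair $(U(x),V_{n}-\beta )$. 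In the same way the weight exponents transform as $p(1-\delta \sigma )-1=p(1+\sigma )-1$ and $1-q\delta \sigma =1+q\sigma $, so that $\Phi _{-1}(x)=\frac{U^{p(1+\sigma )-1}(x)}{\mu ^{p-1}(x)}$ and $\Phi _{-1}^{1-q}(x)=\frac{\mu (x)}{U^{1+q\sigma }(x)}$, while the discrete weight $\Psi _{\beta }(n)$ and the correction factor $1-\theta _{-1}(\sigma ,x)$ keep the same form. Also $\widetilde{\Phi }_{-1}(x)=(1-\theta _{-1}(\sigma ,x))\frac{U^{p(1+\sigma )-1}(x)}{\mu ^{p-1}(x)}$.

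Next I would treat the three cases in turn. For part (i) I invoke Theorem 3.1 together with Theorem 3.2 with $\delta =-1$: the equivalent inequalities (\ref{3.1})--(\ref{3.3}) become precisely (\ref{5.10})--(\ref{5.12}), and since the standing hypothesis $U(\infty )=V(\infty )=\infty $ is in force, Theorem 3.2 yields that $k(\sigma )$ is the best possible constant factor in each of them. For part (ii) I apply Theorem 4.1 with $\delta =-1$, which turns (\ref{4.1})--(\ref{4.3}) into the reverse inequalities (\ref{5.13})--(\ref{5.15}) with best possible constant $k(\sigma )$. For part (iii) I apply Theorem 4.2 with $\delta =-1$, so that (\ref{4.6})--(\ref{4.8}) become (\ref{5.16})--(\ref{5.18}), again with best possible constant $k(\sigma )$. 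In all three cases the equivalence among the three displayed inequalities is inherited verbatim from the corresponding theorem, since the passages from one form to another there (via H\"{o}lder's inequality and the substitution of trial functions) do not depend on the sign of $\delta $.

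I do not anticipate a genuine obstacle here: the statement is exactly the $\delta =-1$ instance of three already-proved theorems. The only point that needs a line of care is the identification $U^{-\gamma }(x)(V_{n}-\beta )^{\gamma }=\left( \frac{V_{n}-\beta }{U(x)}\right) ^{\gamma }$, which is what makes the resulting kernel homogeneous of degree $0$ and thereby explains why Corollary 5.2 is the ``homogeneous-kernel'' counterpart of the ``non-homogeneous-kernel'' Corollary 5.1 obtained for $\delta =+1$; beyond that it is bookkeeping of exponents.
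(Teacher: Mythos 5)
Your proposal is correct and coincides with the paper's own (essentially one-line) justification: Corollary 5.2 is obtained by setting $\delta=-1$ in Theorems 3.1/3.2, 4.1 and 4.2 and rewriting $U^{-\gamma}(x)(V_{n}-\beta)^{\gamma}=\bigl(\frac{V_{n}-\beta}{U(x)}\bigr)^{\gamma}$, $p(1+\sigma)-1$ and $1+q\sigma$ in the weights. The only discrepancies between your (correct) derived forms and the displayed (\ref{5.10}) and (\ref{5.17}) are evident typographical slips in the paper itself ($V_{n}$ in place of $V_{n}-\beta$ in an exponent, and $\nu_{n}/V_{n}^{1-p\sigma}$ in place of $\nu_{n+1}/(V_{n}-\beta)^{1-p\sigma}$), not gaps in your argument.
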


The above inequalities involve the best possible constant factor $k(\sigma
).$

For $\alpha =\rho $ in  Theorem 3.2, Theorem 4.1 and Theorem 4.2, we have

\begin{corollary}
  If $\rho >0,0<\gamma <\sigma \leq 1,$ and $U(\infty )=V(\infty )=\infty ,$ then \\
  (i) for $%
p>1,\ 0<||f||_{p,\Phi _{\delta }},||a||_{q,\Psi _{\beta }}<\infty ,$
we
have the following equivalent inequalities with the best possible constant
factor $$k_{1}(\sigma )=\frac{2\Gamma (\frac{\sigma }{\gamma })\zeta (\frac{%
\sigma }{\gamma })}{\gamma (2\rho )^{\sigma /\gamma }}\::$$%
\begin{eqnarray}
&&\sum_{n=1}^{\infty }\int_{0}^{\infty }\frac{\csc h(\rho U^{\delta \gamma
}(x)(V_{n}-\beta )^{\gamma })}{e^{\rho U^{\delta \gamma }(x)(V_{n}-\beta
)^{\gamma }}}a_{n}f(x)dx<k_{1}(\sigma )||f||_{p,\Phi _{\delta
}}||a||_{q,\Psi _{\beta }},  \label{5.19} \\
&&\sum_{n=1}^{\infty }\frac{\nu _{n+1}}{(V_{n}-\beta )^{1-p\sigma }}\left[
\int_{0}^{\infty }\frac{\csc h(\rho U^{\delta \gamma }(x)(V_{n}-\beta
)^{\gamma })}{e^{\rho U^{\delta \gamma }(x)(V_{n}-\beta )^{\gamma }}}f(x)dx%
\right] ^{p}<k_{1}(\sigma )||f||_{p,\Phi _{\delta }},\ \ \ \label{5.20}
\end{eqnarray}%
\begin{equation}
\left\{ \int_{0}^{\infty }\frac{\mu (x)}{U^{1-q\delta \sigma }(x)}\left[
\sum_{n=1}^{\infty }\frac{\csc h(\rho U^{\delta \gamma }(x)(V_{n}-\beta
)^{\gamma })}{e^{\rho U^{\delta \gamma }(x)(V_{n}-\beta )^{\gamma }}}a_{n}%
\right] ^{q}dx\right\} ^{\frac{1}{q}}<k_{1}(\sigma )||a||_{q,\Psi _{\beta }};
\label{5.21}
\end{equation}%
(ii) for $p<0,0<||f||_{p,\Phi _{\delta }},||a||_{q,\Psi _{\beta }}<\infty ,$
we have the following equivalent inequalities with the best possible
constant factor $k_{1}(\sigma )$:%
\begin{eqnarray}
&&\sum_{n=1}^{\infty }\int_{0}^{\infty }\frac{\csc h(\rho U^{\delta \gamma
}(x)(V_{n}-\beta )^{\gamma })}{e^{\rho U^{\delta \gamma }(x)(V_{n}-\beta
)^{\gamma }}}a_{n}f(x)dx>k_{1}(\sigma )||f||_{p,\Phi _{\delta
}}||a||_{q,\Psi _{\beta }},  \label{5.22} \\
&&\sum_{n=1}^{\infty }\frac{\nu _{n+1}}{(V_{n}-\beta )^{1-p\sigma }}\left[
\int_{0}^{\infty }\frac{\csc h(\rho U^{\delta \gamma }(x)(V_{n}-\beta
)^{\gamma })}{e^{\rho U^{\delta \gamma }(x)(V_{n}-\beta )^{\gamma }}}f(x)dx%
\right] ^{p}>k_{1}(\sigma )||f||_{p,\Phi _{\delta }},  \label{5.23}
\end{eqnarray}%
\begin{equation}
\left\{ \int_{0}^{\infty }\frac{\mu (x)}{U^{1-q\delta \sigma }(x)}\left[
\sum_{n=1}^{\infty }\frac{\csc h(\rho U^{\delta \gamma }(x)(V_{n}-\beta
)^{\gamma })}{e^{\rho U^{\delta \gamma }(x)(V_{n}-\beta )^{\gamma }}}a_{n}%
\right] ^{q}dx\right\} ^{\frac{1}{q}}>k_{1}(\sigma )||a||_{q,\Psi _{\beta }};
\label{5.24}
\end{equation}%
(iii) for $0<p<1,0<||f||_{p,\Phi _{\delta }},||a||_{q,\Psi _{\beta }}<\infty
,$ we have the following equivalent inequalities with the best possible
constant factor $k_{1}(\sigma )$:%
\begin{eqnarray}
&&\sum_{n=1}^{\infty }\int_{0}^{\infty }\frac{\csc h(\rho U^{\delta \gamma
}(x)(V_{n}-\beta )^{\gamma })}{e^{\rho U^{\delta \gamma }(x)(V_{n}-\beta
)^{\gamma }}}a_{n}f(x)dx>k_{1}(\sigma )||f||_{p,\widetilde{\Phi }_{\delta
}}||a||_{q,\Psi _{\beta }},  \label{5.25} \\
&&\sum_{n=1}^{\infty }\frac{\nu _{n+1}}{(V_{n}-\beta )^{1-p\sigma }}\left[
\int_{0}^{\infty }\frac{\csc h(\rho U^{\delta \gamma }(x)(V_{n}-\beta
)^{\gamma })}{e^{\rho U^{\delta \gamma }(x)(V_{n}-\beta )^{\gamma }}}f(x)dx%
\right] ^{p}>k_{1}(\sigma )||f||_{p,\widetilde{\Phi }_{\delta }},
\label{5.26}
\end{eqnarray}%
\begin{eqnarray}
&&\left\{ \int_{0}^{\infty }\frac{(1-\theta _{\delta }(\sigma ,x))^{1-q}\mu
(x)}{U^{1-q\delta \sigma }(x)}\left[ \sum_{n=1}^{\infty }\frac{\csc h(\rho
U^{\delta \gamma }(x)(V_{n}-\beta )^{\gamma })}{e^{\rho U^{\delta \gamma
}(x)(V_{n}-\beta )^{\gamma }}}a_{n}\right] ^{q}dx\right\} ^{\frac{1}{q}}
\notag \\
&>&k_{1}(\sigma )||a||_{q,\Psi _{\beta }}.  \label{5.27}
\end{eqnarray}
\end{corollary}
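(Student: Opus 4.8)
The plan is to obtain Corollary 5.3 as the direct specialization $\alpha = \rho$ of Theorems 3.2, 4.1 and 4.2, so that the only real work is to check that all hypotheses survive this substitution and that the constant factor collapses to $k_1(\sigma)$. First I would observe that with $\alpha = \rho > 0$ one has $\max\{0,-\alpha\} = 0$, so the standing condition $\rho > \max\{0,-\alpha\}$ becomes simply $\rho > 0$, which is assumed; the remaining hypotheses $0 < \gamma < \sigma \leq 1$ and $U(\infty) = V(\infty) = \infty$ are identical to those in Corollary 5.3. Hence Theorems 3.2, 4.1 and 4.2 all apply verbatim with this choice of $\alpha$.

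Next I would evaluate the constant. In the kernel $\csc h(\rho t^{\gamma})/e^{\alpha t^{\gamma}}$ the substitution $\alpha = \rho$ gives $\csc h(\rho t^{\gamma})/e^{\rho t^{\gamma}}$, and in formula (\ref{2.2}) the Hurwitz-zeta argument becomes $\frac{\alpha + \rho}{2\rho} = 1$. Since $\zeta(s,1) = \zeta(s)$ (the footnote following (\ref{2.2})), this yields
\[
k(\sigma) = \frac{2\Gamma(\sigma/\gamma)}{\gamma(2\rho)^{\sigma/\gamma}}\,\zeta\!\left(\tfrac{\sigma}{\gamma},1\right) = \frac{2\Gamma(\sigma/\gamma)\,\zeta(\sigma/\gamma)}{\gamma(2\rho)^{\sigma/\gamma}} = k_1(\sigma),
\]
exactly as recorded in the ``in particular, for $\alpha = \rho$'' remark after (\ref{2.2}). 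Thus $k(\sigma)$ is to be replaced throughout by $k_1(\sigma)$.

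It then remains only to transcribe the conclusions. Putting $\alpha = \rho$ and $k(\sigma) = k_1(\sigma)$ into (\ref{3.1})--(\ref{3.3}), together with the best-possibility assertion of Theorem 3.2, gives part (i), i.e.\ (\ref{5.19})--(\ref{5.21}); doing the same in (\ref{4.1})--(\ref{4.3}) via Theorem 4.1 gives part (ii), i.e.\ (\ref{5.22})--(\ref{5.24}); and doing the same in (\ref{4.6})--(\ref{4.8}) via Theorem 4.2 gives part (iii), i.e.\ (\ref{5.25})--(\ref{5.27}). The equivalence of the three inequalities within each part, and the sharpness of the constant $k_1(\sigma)$, are inherited word for word from Theorems 3.2, 4.1 and 4.2, since those properties do not depend on the value of $\alpha$ beyond $\alpha = \rho$ being an admissible choice. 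Here $\delta \in \{-1,1\}$ is left free, as in the parent theorems.

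Since this is a pure specialization, there is no genuine obstacle: the single point that needs to be stated with care is the identity $\zeta(s,1) = \zeta(s)$, which turns the extended Riemann-zeta value appearing in $k(\sigma)$ into the ordinary Riemann-zeta value in $k_1(\sigma)$; everything else is bookkeeping. One could alternatively absorb Corollary 5.3 into Corollaries 5.1 and 5.2 by additionally fixing $\delta = 1$ or $\delta = -1$, but keeping $\delta$ free is harmless and leaves the argument unchanged.
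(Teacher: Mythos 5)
Your proposal is correct and is exactly the route the paper takes: Corollary 5.3 is stated there as the immediate specialization $\alpha=\rho$ of Theorems 3.2, 4.1 and 4.2, with the constant reducing to $k_{1}(\sigma)$ via $\zeta(s,1)=\zeta(s)$ in (\ref{2.2}). Your additional check that $\rho>\max\{0,-\alpha\}$ collapses to $\rho>0$ is the only hypothesis verification needed, and you have it right.
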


For $\gamma =\frac{\sigma}{2}$ in Corollary 5.3, we obtain the following:

\begin{corollary}
  If $\rho >0,0<\sigma \leq 1,$ and $U(\infty )=V(\infty )=\infty ,$ then \\
  (i) for $p>1,$ $%
0<||f||_{p,\Phi _{\delta }},||a||_{q,\Psi _{\beta }}<\infty ,$ we have the
following equivalent inequalities with the best possible constant factor $%
\frac{\pi ^{2}}{6\sigma \rho ^{2}}$:%
\begin{equation}
\sum_{n=1}^{\infty }\int_{0}^{\infty }\frac{\csc h(\rho U^{\delta \sigma
/2}(x)(V_{n}-\beta )^{\sigma /2})}{e^{\rho U^{\delta \sigma
/2}(x)(V_{n}-\beta )^{\sigma /2}}}a_{n}f(x)dx<\frac{\pi ^{2}}{6\sigma \rho
^{2}}||f||_{p,\Phi _{\delta }}||a||_{q,\Psi _{\beta }},  \label{5.28}
\end{equation}
\begin{equation}
\sum_{n=1}^{\infty }\frac{\nu _{n+1}}{(V_{n}-\beta )^{1-p\sigma }}\left[
\int_{0}^{\infty }\frac{\csc h(\rho U^{\delta \sigma /2}(x)(V_{n}-\beta
)^{\sigma /2})}{e^{\rho U^{\delta \sigma /2}(x)(V_{n}-\beta )^{\sigma /2}}}%
f(x)dx\right] ^{p}<\frac{\pi ^{2}}{6\sigma \rho ^{2}}||f||_{p,\Phi _{\delta
}},  \label{5.29}
\end{equation}%
\begin{equation}
\left\{ \int_{0}^{\infty }\frac{\mu (x)}{U^{1-q\delta \sigma }(x)}\left[
\sum_{n=1}^{\infty }\frac{\csc h(\rho U^{\delta \sigma /2}(x)(V_{n}-\beta
)^{\sigma /2})}{e^{\rho U^{\delta \sigma /2}(x)(V_{n}-\beta )^{\sigma /2}}}%
a_{n}\right] ^{q}dx\right\} ^{\frac{1}{q}}<\frac{\pi ^{2}}{6\sigma \rho ^{2}}%
||a||_{q,\Psi _{\beta }};  \label{5.30}
\end{equation}%
(ii) for $p<0,0<||f||_{p,\Phi _{\delta }},||a||_{q,\Psi _{\beta }}<\infty ,$
we have the following equivalent inequalities with the best possible
constant factor $\frac{\pi ^{2}}{6\sigma \rho ^{2}}$:%
\begin{equation}
\sum_{n=1}^{\infty }\int_{0}^{\infty }\frac{\csc h(\rho U^{\delta \sigma
/2}(x)(V_{n}-\beta )^{\sigma /2})}{e^{\rho U^{\delta \sigma
/2}(x)(V_{n}-\beta )^{\sigma /2}}}a_{n}f(x)dx>\frac{\pi ^{2}}{6\sigma \rho
^{2}}||f||_{p,\Phi _{\delta }}||a||_{q,\Psi _{\beta }},  \label{5.31}
\end{equation}
\begin{equation}
\sum_{n=1}^{\infty }\frac{\nu _{n+1}}{(V_{n}-\beta )^{1-p\sigma }}\left[
\int_{0}^{\infty }\frac{\csc h(\rho U^{\delta \sigma /2}(x)(V_{n}-\beta
)^{\sigma /2})}{e^{\rho U^{\delta \sigma /2}(x)(V_{n}-\beta )^{\sigma /2}}}%
f(x)dx\right] ^{p}>\frac{\pi ^{2}}{6\sigma \rho ^{2}}||f||_{p,\Phi _{\delta
}},  \label{5.32}
\end{equation}%
\begin{equation}
\left\{ \int_{0}^{\infty }\frac{\mu (x)}{U^{1-q\delta \sigma }(x)}\left[
\sum_{n=1}^{\infty }\frac{\csc h(\rho U^{\delta \sigma /2}(x)(V_{n}-\beta
)^{\sigma /2})}{e^{\rho U^{\delta \sigma /2}(x)(V_{n}-\beta )^{\sigma /2}}}%
a_{n}\right] ^{q}dx\right\} ^{\frac{1}{q}}>\frac{\pi ^{2}}{6\sigma \rho ^{2}}%
||a||_{q,\Psi _{\beta }};  \label{5.33}
\end{equation}%
(iii) for $0<p<1,0<||f||_{p,\Phi _{\delta }},||a||_{q,\Psi _{\beta }}<\infty
,$ we have the following equivalent inequalities with the best possible
constant factor $\frac{\pi ^{2}}{6\sigma \rho ^{2}}$:%
\begin{equation}
\sum_{n=1}^{\infty }\int_{0}^{\infty }\frac{\csc h(\rho U^{\delta \sigma
/2}(x)(V_{n}-\beta )^{\sigma /2})}{e^{\rho U^{\delta \sigma
/2}(x)(V_{n}-\beta )^{\sigma /2}}}a_{n}f(x)dx>\frac{\pi ^{2}}{6\sigma \rho
^{2}}||f||_{p,\widetilde{\Phi }_{\delta }}||a||_{q,\Psi _{\beta }},
\label{5.34}
\end{equation}
\begin{equation}
\sum_{n=1}^{\infty }\frac{\nu _{n+1}}{(V_{n}-\beta )^{1-p\sigma }}\left[
\int_{0}^{\infty }\frac{\csc h(\rho U^{\delta \sigma /2}(x)(V_{n}-\beta
)^{\sigma /2})}{e^{\rho U^{\delta \sigma /2}(x)(V_{n}-\beta )^{\sigma /2}}}%
f(x)dx\right] ^{p}>\frac{\pi ^{2}}{6\sigma \rho ^{2}}||f||_{p,\widetilde{%
\Phi }_{\delta }},  \label{5.35}
\end{equation}%
\begin{eqnarray}
&&\left\{ \int_{0}^{\infty }\frac{(1-\theta _{\delta }(\sigma ,x))^{1-q}\mu
(x)}{U^{1-q\delta \sigma }(x)}\left[ \sum_{n=1}^{\infty }\frac{\csc h(\rho
U^{\delta \sigma /2}(x)(V_{n}-\beta )^{\sigma /2})}{e^{\rho U^{\delta \sigma
/2}(x)(V_{n}-\beta )^{\sigma /2}}}a_{n}\right] ^{q}dx\right\} ^{\frac{1}{q}}
\notag \\
&>&\frac{\pi ^{2}}{6\sigma \rho ^{2}}||a||_{q,\Psi _{\beta }}.  \label{5.36}
\end{eqnarray}%
\textbf{\ }
\end{corollary}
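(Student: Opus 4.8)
The plan is to obtain Corollary 5.4 as the special case $\gamma=\frac{\sigma}{2}$ of Corollary 5.3. First I would check that the standing hypotheses remain consistent. Corollary 5.3 requires $\rho>0$ (this is $\rho>\max\{0,-\alpha\}$ with $\alpha=\rho$) and $0<\gamma<\sigma\le 1$; with $\gamma=\frac{\sigma}{2}$ the condition $\gamma<\sigma$ holds automatically for every $\sigma>0$, so the remaining constraint is exactly $0<\sigma\le 1$, which is precisely the hypothesis stated in Corollary 5.4, while $\rho>0$ and $U(\infty)=V(\infty)=\infty$ are carried over verbatim.

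Next I would identify the constant factor. Since here $\alpha=\rho$, Corollary 5.3 provides the best possible constant $k_{1}(\sigma)=\frac{2\Gamma(\sigma/\gamma)\zeta(\sigma/\gamma)}{\gamma(2\rho)^{\sigma/\gamma}}$. Taking $\gamma=\frac{\sigma}{2}$ gives $\sigma/\gamma=2$, and, as already recorded at the end of Example 2.2, $k_{1}(\sigma)=\frac{2\Gamma(2)\zeta(2)}{(\sigma/2)(2\rho)^{2}}=\frac{\pi^{2}}{6\sigma\rho^{2}}$, using $\Gamma(2)=1$ and $\zeta(2)=\frac{\pi^{2}}{6}$. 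At the same time the kernel $\frac{\csc h(\rho U^{\delta\gamma}(x)(V_{n}-\beta)^{\gamma})}{e^{\rho U^{\delta\gamma}(x)(V_{n}-\beta)^{\gamma}}}$ specializes to $\frac{\csc h(\rho U^{\delta\sigma/2}(x)(V_{n}-\beta)^{\sigma/2})}{e^{\rho U^{\delta\sigma/2}(x)(V_{n}-\beta)^{\sigma/2}}}$, while the weight functions $\Phi_{\delta}$, $\Psi_{\beta}$, $\widetilde{\Phi}_{\delta}$ and the quantity $\theta_{\delta}(\sigma,x)$ are unaffected, since none of them involves $\gamma$ explicitly.

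With these two observations the three parts of Corollary 5.4 are immediate transcriptions of the three parts of Corollary 5.3: part (i) (the case $p>1$) follows from (\ref{5.19})--(\ref{5.21}); part (ii) (the case $p<0$) from (\ref{5.22})--(\ref{5.24}); and part (iii) (the case $0<p<1$) from (\ref{5.25})--(\ref{5.27}). In each part the mutual equivalence of the three inequalities and the optimality of the constant are inherited directly from Corollary 5.3, with $k_{1}(\sigma)$ replaced everywhere by its closed form $\frac{\pi^{2}}{6\sigma\rho^{2}}$.

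I do not anticipate a genuine obstacle here; the argument is purely a matter of substitution. The only points meriting a moment's care are the arithmetic reduction of $k_{1}(\sigma)$ to $\frac{\pi^{2}}{6\sigma\rho^{2}}$, and the remark that $\gamma=\frac{\sigma}{2}$ automatically satisfies the strict inequality $\gamma<\sigma$ demanded by Corollary 5.3, so that no hypothesis is silently dropped when passing from $0<\gamma<\sigma\le 1$ to $0<\sigma\le 1$.
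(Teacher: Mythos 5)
Your proposal is correct and follows exactly the paper's route: Corollary 5.4 is obtained by setting $\gamma=\frac{\sigma}{2}$ in Corollary 5.3, so that $\sigma/\gamma=2$ and $k_{1}(\sigma)=\frac{2\Gamma(2)\zeta(2)}{(\sigma/2)(2\rho)^{2}}=\frac{\pi^{2}}{6\sigma\rho^{2}}$, the hypotheses $0<\gamma<\sigma\leq 1$ collapsing to $0<\sigma\leq 1$. One negligible imprecision: $\theta_{\delta}(\sigma,x)$ (and hence $\widetilde{\Phi}_{\delta}$) does depend on $\gamma$ through the kernel $\csc h(\rho u^{\gamma})/e^{\alpha u^{\gamma}}$ in its defining integral, but this is harmless since in Corollary 5.4 these symbols are simply understood with the specialization $\gamma=\frac{\sigma}{2}$ in force.
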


\begin{remark}$ $\\
  (i) For $\beta =0$ in (\ref{3.1}), the
following inequality holds true:%
\begin{equation}
\sum_{n=1}^{\infty }\int_{0}^{\infty }\frac{\csc h(\rho (U^{\delta
}(x)V_{n})^{\gamma })}{e^{\alpha (U^{\delta }(x)V_{n})^{\gamma }}}%
a_{n}f(x)dx<k(\sigma )||f||_{p,\Phi _{\delta }}||a||_{q,\Psi _{0}}.
\label{5.37}
\end{equation}%
Hence, (\ref{3.1}) is a more accurate inequality of (\ref{5.37}) for $%
0<\beta \leq \frac{\nu _{1}}{2}.$
\end{remark}

\noindent (ii) For $\mu (x)=\nu _{n}=1$ in (\ref{5.37}), we have the following
inequality with the best possible constant factor $k(\sigma ):$%
\begin{eqnarray}
&&\sum_{n=1}^{\infty }\int_{0}^{\infty }\frac{\csc h(\rho (x^{\delta
}n)^{\gamma })}{e^{\alpha (x^{\delta }n)^{\gamma }}}a_{n}f(x)dx  \notag \\
&<&k(\sigma )\left[ \int_{0}^{\infty }x^{p(1-\delta \sigma )-1}f^{p}(x)dx%
\right] ^{\frac{1}{p}}\left[ \sum_{n=1}^{\infty }n^{q(1-\sigma )-1}a_{n}^{q}%
\right] ^{\frac{1}{q}}.  \label{5.38}
\end{eqnarray}

In particular, for $\delta =1,$ we have the following inequality with the
non-homogeneous kernel:%
\begin{eqnarray}
&&\sum_{n=1}^{\infty }\int_{0}^{\infty }\frac{\csc h(\rho (xn)^{\gamma })}{%
e^{\alpha (xn)^{\gamma }}}a_{n}f(x)dx  \notag \\
&<&k(\sigma )\left[ \int_{0}^{\infty }x^{p(1-\sigma )-1}f^{p}(x)dx\right] ^{%
\frac{1}{p}}\left[ \sum_{n=1}^{\infty }n^{q(1-\sigma )-1}a_{n}^{q}\right] ^{%
\frac{1}{q}};  \label{5.39}
\end{eqnarray}%
for $\delta =-1,$ we have the following inequality with the homogeneous
kernel of degree 0:%
\begin{eqnarray}
&&\sum_{n=1}^{\infty }\int_{0}^{\infty }\frac{\csc h(\rho (\frac{n}{x}%
)^{\gamma })}{e^{\alpha (\frac{n}{x})^{\gamma }}}a_{n}f(x)dx  \notag \\
&<&k(\sigma )\left[ \int_{0}^{\infty }x^{p(1+\sigma )-1}f^{p}(x)dx\right] ^{%
\frac{1}{p}}\left[ \sum_{n=1}^{\infty }n^{q(1-\sigma )-1}a_{n}^{q}\right] ^{%
\frac{1}{q}}.  \label{5.40}
\end{eqnarray}
$$ $$
\noindent\textbf{Acknowledgements}

The authors wish to express their thanks to the referees for their careful reading of the manuscript and for their valuable suggestions.\\
We would like to thank Professors J. C. Kuang and M. Krni\'c for their very useful comments.\\
B. Yang: This work is supported by the National Natural Science Foundation of China (No. 61370186), and 2013
Knowledge Construction Special Foundation Item of Guangdong Institution of
Higher Learning College and University (No. 2013KJCX0140). We are grateful for their help.\\
M. Th. Rassias: This work is supported by the
SNF grant: SNF PP00P2\textunderscore138906. I would like to express my gratitude to Professor P. -O. Dehaye and the Swiss National Science Foundation for providing me with financial support to conduct postdoctoral research at the University of Zurich during the academic year 2015-2016.

\end{document}